\newtheorem{Theorem}[equation]{Theorem}
\newtheorem{Lemma}[equation]{Lemma}
\theoremstyle{definition}
\newtheorem{Definition}[equation]{Definition}
\theoremstyle{remark}
\newtheorem{Remark}[equation]{Remark}
\numberwithin{equation}{section}
\DeclareMathOperator{\re}{re}
\DeclareMathOperator{\ev}{ev}
\DeclareMathOperator{\id}{id}
\DeclareMathOperator{\ad}{ad}
\DeclareMathOperator{\row}{row}
\DeclareMathOperator{\col}{col}
\newcommand{\plim}[1][]{\mathop{\varprojlim}\limits_{#1}}
\newcommand{\ve}{\varepsilon}
\begin{document}
\title{Guay's affine Yangians and non-rectangular $W$-algebras}
\author{Mamoru Ueda}
\date{}
\maketitle

\begin{abstract}
We construct a non-trivial homomorphism from the Guay's affine Yangian to the universal enveloping algebra of non-rectangular $W$-algebras of type $A$. In order to construct the homomorphism, we extend the Guay's affine Yangian and its coproduct.
\end{abstract}

\section{Introduction}

A $W$-algebra appeared in the study of two dimensional conformal field theories (\cite{Z}) and has been studied by wide range mathematicians and physicists such that integrable systems, and four-dimensional gauge theories. In this paper, we relate the $W$-algebras of type $A$ to one quantum group, which is called the {\it Guay's affine Yangian}.

The Guay's affine Yangian $Y_{\hbar,\ve}(\widehat{\mathfrak{sl}}(n))$ (\cite{Gu2} and \cite{Gu1}) is a 2-parameter Yangian and is the deformation of the universal enveloping algebra of the central extension of $\mathfrak{sl}(n)[u^{\pm1},v]$. The Guay's affine Yangian $Y_{\hbar,\ve}(\widehat{\mathfrak{sl}}(n))$ has an evaluation map (\cite{Gu1} and \cite{K1})
\begin{equation*}
\ev_x\colon Y_{\hbar,\ve}(\widehat{\mathfrak{sl}}(n))\to \text{the standard degreewise completion of }U(\widehat{\mathfrak{gl}}(n))
\end{equation*}
and a coproduct (\cite{Gu1} and \cite{GNW})
\begin{equation*}
\Delta\colon Y_{\hbar,\ve}(\widehat{\mathfrak{sl}}(n))\to Y_{\hbar,\ve}(\widehat{\mathfrak{sl}}(n))\widehat{\otimes}Y_{\hbar,\ve}(\widehat{\mathfrak{sl}}(n)),
\end{equation*}
where $Y_{\hbar,\ve}(\widehat{\mathfrak{sl}}(n))\widehat{\otimes}Y_{\hbar,\ve}(\widehat{\mathfrak{sl}}(n))$ is the standard degreewise completion of $Y_{\hbar,\ve}(\widehat{\mathfrak{sl}}(n))^{\otimes2}$. 

Let us take an integer $a\geq n$. 
We extend the Guay's affine Yangian $Y_{\hbar,\ve}(\widehat{\mathfrak{sl}}(n))$ to the new associative algebra $Y^a_{\hbar,\ve}(\widehat{\mathfrak{sl}}(n))$.
One of the features of $Y^a_{\hbar,\ve}(\widehat{\mathfrak{sl}}(n))$ is that there exist the following natural algebra homomorphisms
\begin{gather*}
\Psi_1\colon Y_{\hbar,\ve}(\widehat{\mathfrak{sl}}(n))\to Y^a_{\hbar,\ve}(\widehat{\mathfrak{sl}}(n)),\\
\Psi_2\colon U(\widehat{\mathfrak{gl}}(a))\to Y^a_{\hbar,\ve}(\widehat{\mathfrak{sl}}(n))
\end{gather*}
and $Y^a_{\hbar,\ve}(\widehat{\mathfrak{sl}}(n))$ is generated by the image of $\Psi_1$ and $\Psi_2$. The algebra $Y^a_{\hbar,\ve}(\widehat{\mathfrak{sl}}(n))$ has a map corresponding to the evaluation map
\begin{gather*}
\widetilde{\ev}_x\colon Y_{\hbar,\ve}^a(\widehat{\mathfrak{sl}}(n))\to \text{the standard degreewise completion of }U(\widehat{\mathfrak{gl}}(a)).
\end{gather*}
For $a\geq b\geq n$, there also exists a map corresponding to the coproduct
\begin{gather*}
\Delta\colon Y_{\hbar,\ve}^b(\widehat{\mathfrak{sl}}(n))\to Y_{\hbar,\ve-(a-b)\hbar}^a(\widehat{\mathfrak{sl}}(n))\widehat{\otimes}Y_{\hbar,\ve}^b(\widehat{\mathfrak{sl}}(n))/\sim,
\end{gather*}
where $Y_{\hbar,\ve-(a-b)\hbar}^a(\widehat{\mathfrak{sl}}(n))\widehat{\otimes}Y_{\hbar,\ve}^b(\widehat{\mathfrak{sl}}(n))/\sim$ is the standard degreewise completion of the tensor algebra $Y_{\hbar,\ve-(a-b)\hbar}^a(\widehat{\mathfrak{sl}}(n))\otimes Y_{\hbar,\ve}^b(\widehat{\mathfrak{sl}}(n))$  divided by one relation.

A $W$-algebra $\mathcal{W}^k(\mathfrak{g},f)$ is a vertex algebra associated with a finite dimensional reductive Lie algebra $\mathfrak{g}$ and its nilpotent element $f$. It is defined by the quantized Drinfeld-Sokolov reduction (\cite{KW1} and \cite{FF}). In this paper, we consider the case that $\mathfrak{g}=\mathfrak{gl}(N)$ and its nilpotent element whose Jordan block is of type $(1^{q_1-q_2},2^{q_2-q_3},3^{q_3-q_4},\cdots,(l-1)^{q_{l-1}-q_l},l^{q_l})$, where 
\begin{gather*}
N=q_1+q_2+\cdots+q_l,\qquad q_1\geq q_2\geq\cdots\geq q_l.
\end{gather*}
In this case, there exists an injective homomorphism called {\it Miura map} (see \cite{KW1})
\begin{equation*}
\mu\colon\mathcal{W}^k(\mathfrak{g},f)\to\otimes_{i=1}^lV^{\kappa_i}(\mathfrak{gl}(q_i)),
\end{equation*}
where $V^{\kappa_i}(\mathfrak{gl}(q_i))$ is the universal affine vertex algebra associated with $\mathfrak{gl}(q_i)$ and its inner product $\kappa_i$. Taking the universal enveloping algebra of both sides in the sense of \cite{FT} and \cite{MNT}, we obtain an injective homomorphism
\begin{equation*}
\widetilde{\mu}\colon\mathcal{U}(\mathcal{W}^k(\mathfrak{g},f))\to U(\widehat{\mathfrak{gl}}(q_1))\widehat{\otimes}\cdots\widehat{\otimes} U(\widehat{\mathfrak{gl}}(q_l)),
\end{equation*}
where $U(\widehat{\mathfrak{gl}}(q_1))\widehat{\otimes}\cdots\widehat{\otimes} U(\widehat{\mathfrak{gl}}(q_l))$ is the standard degreewise completion of $\otimes_{i=1}^lU(\mathfrak{gl}(q_i))$.

In the case that $q_1=q_2=\cdots=q_l=n$, the $W$-algebra $\mathcal{W}^k(\mathfrak{g},f)$ is called the rectangular $W$-algebra of type $A$.
In this case, by a direct computation, the author \cite{U4} have constructed a surjective homomorphism
\begin{equation*}
\widehat{\Phi}\colon Y_{\hbar,\ve}^b(\widehat{\mathfrak{sl}}(n))\to\mathcal{U}(\mathcal{W}^k(\mathfrak{gl}(ln),f)),
\end{equation*}
where $\mathcal{U}(\mathcal{W}^k(\mathfrak{gl}(ln),f))$ is the universal enveloping algebra of $\mathcal{W}^k(\mathfrak{gl}(ln),f)$. In \cite{KU}, Kodera and the author showed that this homomorphism can be written down by using the coproduct (\cite{Gu1} and \cite{GNW}) and evaluation map (\cite{Gu1} and \cite{K1}) for the Guay's affine Yangian as follows;
\begin{equation*}
(\ev_0\otimes\ev_{\hbar\alpha}\otimes\cdots\otimes\ev_{\hbar(l-1)\alpha})\circ\Delta\otimes\id^{\otimes l-1})\circ\cdots\circ(\Delta\otimes\id)\circ\Delta=\widetilde{\mu}\circ\widehat{\Phi},
\end{equation*}
where $\alpha=k+(l-1)n$.

We extend this result  to the general nilpotent element.
In finite setting, Brundan-Kleshchev \cite{BK} gave a surjective homomorphism from a shifted Yangian, which is a subalgebra of the finite Yangian associated with $\mathfrak{gl}(n)$, to a finite $W$-algebra (\cite{Pr}) of type $A$ for its general nilpotent element. A finite $W$-algebra $\mathcal{W}^{\text{fin}}(\mathfrak{g},f)$ is an associative algebra associated with a reductive Lie algebra $\mathfrak{g}$ and its nilpotent element $f$ and is a finite analogue of a $W$-algebra $\mathcal{W}^k(\mathfrak{g},f)$ (\cite{DSK1} and \cite{A1}). In \cite{DKV}, De Sole, Kac and Valeri constructed a homomorphism from the finite Yangian of type $A$ to the finite $W$-algebras of type $A$ by using the Lax operator, which is a restriction of the homomorphism given by Brundan-Kleshchev in \cite{BK}.

Motivated by the work of De Sole, Kac and Valeri \cite{DKV}, by a direct computation, the author \cite{U5} constructed a homomorphism from the Guay's affine Yangian $Y_{\hbar,\ve}(\widehat{\mathfrak{sl}}(n))$ to the universal enveloping algebra of $\mathcal{W}^k(\mathfrak{gl}(m+n),(1^{m-n},2^n))$.
In this article, we constructed a homomorphism from the Guay's affine Yangian $Y_{\hbar,\ve}(\widehat{\mathfrak{sl}}(q_l))$ to the universal enveloping algebra of the $W$-algebra $\mathcal{W}^k(\mathfrak{gl}(N),f)$.
\begin{Theorem}
Let $q_l\geq3$. We assume that $\dfrac{\ve}{\hbar}=-(k+N)$.
Then, there exists an algebra homomorphism 
\begin{equation*}
\Phi\colon Y_{\hbar,\ve}(\widehat{\mathfrak{sl}}(q_l))\to \mathcal{U}(\mathcal{W}^{k}(\mathfrak{gl}(N),f))
\end{equation*} 
satisfying
\begin{equation*}
\bigotimes_{i=1}^l\limits\widetilde{\ev}_{a_i}\circ\Delta^{q_1,q_2}\otimes\id^{\otimes l-1})\circ\cdots\circ(\Delta^{q_{l-2},q_{l-1}}\otimes\id)\circ\Delta^{q_{l-1},q_l}\circ\Psi_1=\widetilde{\mu}\circ\Phi,
\end{equation*}
where $a_i=-\hbar\sum_{y=i+1}^l\limits (k+N-q_i)$ and $\ve_i=\ve-(q_i-q_l)\hbar$.
\end{Theorem}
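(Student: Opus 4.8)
The plan is to avoid checking the Drinfeld relations of $Y_{\hbar,\ve}(\widehat{\mathfrak{sl}}(q_l))$ by hand, and instead to construct $\Phi$ through the factorization that already appears in the statement. Set
\begin{equation*}
F=\Bigl(\bigotimes_{i=1}^l\widetilde{\ev}_{a_i}\Bigr)\circ(\Delta^{q_1,q_2}\otimes\id^{\otimes(l-1)})\circ\cdots\circ(\Delta^{q_{l-2},q_{l-1}}\otimes\id)\circ\Delta^{q_{l-1},q_l}\circ\Psi_1,
\end{equation*}
so that $F$ is a map from $Y_{\hbar,\ve}(\widehat{\mathfrak{sl}}(q_l))$ into the degreewise completion of $\otimes_{i=1}^lU(\mathfrak{gl}(q_i))$, which is exactly the target of the Miura map $\widetilde{\mu}$. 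Each constituent of $F$---the inclusion $\Psi_1$, every coproduct $\Delta^{q_i,q_{i+1}}$, and every evaluation map $\widetilde{\ev}_{a_i}$---is an algebra homomorphism, so $F$ is one as well. The key observation is that the whole theorem reduces to the single inclusion $\Ima F\subseteq\Ima\widetilde{\mu}$: since $\widetilde{\mu}$ is injective, once this inclusion holds the map $\Phi:=\widetilde{\mu}^{-1}\circ F$ is well-defined, satisfies $\widetilde{\mu}\circ\Phi=F$ by construction, and is automatically an algebra homomorphism, because for $x,y\in Y_{\hbar,\ve}(\widehat{\mathfrak{sl}}(q_l))$ one has $\widetilde{\mu}(\Phi(xy))=F(xy)=F(x)F(y)=\widetilde{\mu}(\Phi(x)\Phi(y))$, whence $\Phi(xy)=\Phi(x)\Phi(y)$ by injectivity.

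Before addressing that inclusion I would confirm that $F$ itself is well-defined. Each coproduct lands in a degreewise completion of a tensor algebra modulo one relation, so the composite makes sense only if (i) the successive shifts of the parameter $\ve$ match up and (ii) $\bigotimes_i\widetilde{\ev}_{a_i}$ annihilates the relation $\sim$. For (i) I would note that applying $\Delta^{q_i,q_{i+1}}$ carries the parameter $\ve_{i+1}=\ve-(q_{i+1}-q_l)\hbar$ to $\ve_{i+1}-(q_i-q_{i+1})\hbar=\ve-(q_i-q_l)\hbar=\ve_i$, so the parameters telescope precisely to the values $\ve_i$ recorded in the statement and the $i$-th tensor factor really is $Y^{q_i}_{\hbar,\ve_i}(\widehat{\mathfrak{sl}}(q_l))$. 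For (ii) I would check, directly from the single defining relation of the quotient and the explicit form of $\widetilde{\ev}_x$, that the chosen evaluation points $a_i$ are exactly those for which that relation is sent to $0$; this is where the hypothesis $\ve/\hbar=-(k+N)$ enters, tying the spectral parameters $a_i$ to the level $k$.

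For the inclusion $\Ima F\subseteq\Ima\widetilde{\mu}$ I would argue on generators: since $F$ is an algebra homomorphism and $Y_{\hbar,\ve}(\widehat{\mathfrak{sl}}(q_l))$ is generated by its lowest Drinfeld generators (the $x^{\pm}_{i,0}$, $h_{i,0}$, $h_{i,1}$ for the affine nodes $i$, equivalently together with $x^{+}_{i,1}$), it suffices to show that $F$ sends each such generator into $\Ima\widetilde{\mu}$. I would compute these images explicitly---pushing each generator through $\Psi_1$, then through the iterated coproduct via the coproduct formulas of \cite{Gu1} and \cite{GNW}, and finally through the evaluation maps---and then exhibit, term by term, an explicit preimage in $\mathcal{U}(\mathcal{W}^{k}(\mathfrak{gl}(N),f))$, namely a normally ordered polynomial in the Drinfeld--Sokolov generators whose image under $\widetilde{\mu}$ equals $F$ of that generator. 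Producing such a preimage simultaneously proves membership in $\Ima\widetilde{\mu}$ and pins down $\Phi$ on generators. The role of $q_l\geq3$ here is to ensure that the affine $A^{(1)}_{q_l-1}$ diagram has enough nodes for these Serre-type generators and relations to be non-degenerate, so that the generator-level check is genuinely exhaustive.

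The main obstacle is this last computation. The affine-Yangian coproduct is an infinite sum living in the degreewise completion, so evaluating it on the level-one generators $x^{+}_{i,1}$, $h_{i,1}$ is delicate. The simplification I would rely on is that, after applying $\widetilde{\ev}_{a_i}$ at the specific points $a_i$, only finitely many terms survive in each homogeneous degree, and that these---after using $\ve/\hbar=-(k+N)$---reorganize into exactly the normally ordered expressions appearing in the Miura image of the $W$-algebra currents. Verifying this reorganization, and in particular that the quadratic corrections produced by the coproduct match the non-linear terms of the $W$-algebra generators, is where the real work lies; the rectangular computation of \cite{U4} and \cite{KU} and the two-block computation of \cite{U5} supply the template, now with the block sizes $q_i$ varying and the spectral parameters $a_i$ encoding the non-rectangular pyramid.
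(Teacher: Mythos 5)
Your proposal takes essentially the same route as the paper's proof: the paper likewise forms the composite $F=\ev^l\circ\Delta^l$ out of $\Psi_1$, the iterated maps $\Delta^{q_i,q_{i+1}}$ and the shifted evaluation maps, invokes the injectivity of $\widetilde{\mu}$ to reduce the entire theorem (well-definedness of $\Phi$ as an algebra homomorphism together with the factorization) to the generator-level identity $\ev^l\circ\Delta^l(A_{i,r})=\widetilde{\mu}\circ\Phi(A_{i,r})$ for $r=0,1$ and $A=H,X^{\pm}$, and then carries out exactly the explicit term-matching computation you defer, exhibiting the preimages $\Phi(A_{i,r})$ as explicit polynomials in the currents $W^{(1)}_{p,q}$ and $W^{(2)}_{p,q}$. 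The only inaccuracy is minor and does not affect the strategy: compatibility of $\bigotimes_{i}\widetilde{\ev}_{a_i}$ with the quotient relation $c_a\otimes1-1\otimes c_b=-(a-b)$ is governed by the central characters, i.e.\ by the parameters $\ve_i$ alone, whereas the shift points $a_i=-\hbar\sum_{y>i}\alpha_y$ are instead forced by matching the $\gamma$-shift terms in $\widetilde{\mu}(W^{(2)}_{p,q})$ during the generator computation.
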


We hope that this theorem will help to resolve the genralized AGT (Alday-Gaiotto-Tachikawa) conjecture.
The AGT conjecture suggests that there exists a representation of the principal $W$-algebra of type $A$ on the equivariant homology space of the moduli space of $U(r)$-instantons. Schiffmann and Vasserot \cite{SV} gave this representation by using an action of the Yangian associated with $\widehat{\mathfrak{gl}}(1)$ on this equivariant homology space. 
It is conjectured in \cite{CE} that an action of an iterated $W$-algebra of type $A$ on the equivariant homology space of the affine Laumon space will be given through an action of an affine shifted Yangian constructed in \cite {FT}.

\section{Guay's affine Yangian}

Let us recall the definition of the Guay's affine Yangian. The Guay's affine Yangian $Y_{\ve_1,\ve_2}(\widehat{\mathfrak{sl}}(n))$ was first introduced by Guay (\cite{Gu2} and \cite{Gu1}) and is the deformation of the universal enveloping algebra of the central extension of $\mathfrak{sl}(n)[u^{\pm1},v]$.
\begin{Definition}\label{Prop32}
Let $n\geq3$ and an $n\times n$ matrix $(a_{i,j})_{1\leq i,j\leq n}$ be
\begin{gather*}
a_{ij} =
	\begin{cases}
	2 &\text{if } i=j, \\
	         -1&\text{if }j=i\pm1,\\
	        1 &\text{if }(i,j)=(0,n-1),(n-1,0),\\
		0  &\text{otherwise.}
	\end{cases}
\end{gather*}
The Guay's affine Yangian $Y_{\hbar,\ve}(\widehat{\mathfrak{sl}}(n))$ is the associative algebra generated by $X_{i,r}^{+}, X_{i,r}^{-}, H_{i,r}$ $(i \in \{0,1,\cdots, n-1\}, r = 0,1)$ subject to the following defining relations:
\begin{gather}
[H_{i,r}, H_{j,s}] = 0,\label{Eq2.1}\\
[X_{i,0}^{+}, X_{j,0}^{-}] = \delta_{ij} H_{i, 0},\label{Eq2.2}\\
[X_{i,1}^{+}, X_{j,0}^{-}] = \delta_{ij} H_{i, 1} = [X_{i,0}^{+}, X_{j,1}^{-}],\label{Eq2.3}\\
[H_{i,0}, X_{j,r}^{\pm}] = \pm a_{ij} X_{j,r}^{\pm},\label{Eq2.4}\\
[\tilde{H}_{i,1}, X_{j,0}^{\pm}] = \pm a_{ij}\left(X_{j,1}^{\pm}\right)\text{ if }(i,j)\neq(0,n-1),(n-1,0),\label{Eq2.5}\\
[\tilde{H}_{0,1}, X_{n-1,0}^{\pm}] = \mp \left(X_{n-1,1}^{\pm}-(\ve+\dfrac{n}{2}\hbar) X_{n-1, 0}^{\pm}\right),\label{Eq2.6}\\
[\tilde{H}_{n-1,1}, X_{0,0}^{\pm}] = \mp \left(X_{0,1}^{\pm}+(\ve+\dfrac{n}{2}\hbar) X_{0, 0}^{\pm}\right),\label{Eq2.7}\\
[X_{i, 1}^{\pm}, X_{j, 0}^{\pm}] - [X_{i, 0}^{\pm}, X_{j, 1}^{\pm}] = \pm a_{ij}\dfrac{\hbar}{2} \{X_{i, 0}^{\pm}, X_{j, 0}^{\pm}\}\text{ if }(i,j)\neq(0,n-1),(n-1,0),\label{Eq2.8}\\
[X_{0, 1}^{\pm}, X_{n-1, 0}^{\pm}] - [X_{0, 0}^{\pm}, X_{n-1, 1}^{\pm}]= \mp\dfrac{\hbar}{2} \{X_{0, 0}^{\pm}, X_{n-1, 0}^{\pm}\} - (\ve+\dfrac{n}{2}\hbar) [X_{0, 0}^{\pm}, X_{n-1, 0}^{\pm}],\label{Eq2.9}\\
(\ad X_{i,0}^{\pm})^{1-a_{ij}} (X_{j,0}^{\pm})= 0 \text{ if }i \neq j, \label{Eq2.10}
\end{gather}
where $\tilde{H}_{i,1}=H_{i,1}-\dfrac{\hbar}{2}H_{i,0}^2$.
\end{Definition}
\begin{Remark}
The defining relations of $Y_{\hbar,\ve}(\widehat{\mathfrak{sl}}(n))$ are different from those of $Y_{\ve_1,\ve_2}(\widehat{\mathfrak{sl}}(n))$ which is called the Guay's affine Yangian in \cite{K1}. In \cite{K1}, generators of $Y_{\ve_1,\ve_2}(\widehat{\mathfrak{sl}}(n))$ are denoted by 
\begin{equation*}
\{x^\pm_{i,r},h_{i,r}\mid0\leq i\leq n-1,r\in\mathbb{Z}_{\geq0}\}
\end{equation*}
with 2-parameters $\ve_1$ and $\ve_2$.
Actually, the algebra $Y_{\hbar,\ve}(\widehat{\mathfrak{sl}}(n))$ is isomorphic to  $Y_{\ve_1,\ve_2}(\widehat{\mathfrak{sl}}(n))$.
The isomorphism $\Psi$ from $Y_{\hbar,\ve}(\widehat{\mathfrak{sl}}(n))$ to $Y_{\ve_1,\ve_2}(\widehat{\mathfrak{sl}}(n))$ is given by
\begin{gather*}
\Psi(H_{i,0})=h_{i,0},\quad\Psi(X^\pm_{i,0})=x^\pm_{i,0},\\
\Psi(H_{i,1})=\begin{cases}
h_{0,1}&\text{ if $i=0$},\\
h_{i,1}+\dfrac{i}{2}(\ve_1-\ve_2)h_{i,0}&\text{ if $i\neq0$},
\end{cases}\\
\hbar=\ve_1+\ve_2,\qquad
\ve=-n\ve_2.
\end{gather*}
In this paper, we do not use $Y_{a,b}(\widehat{\mathfrak{sl}}(n))$ in the meaning of \cite{K1}.
\end{Remark}

Let us recall the evaluation map for the Guay's affine Yangian (see \cite{Gu1} and \cite {K1}). We set a Lie algebra 
\begin{equation*}
\widehat{\mathfrak{gl}}(n)^c=\Big(\bigoplus_{s\in\mathbb{Z}}\limits\bigoplus_{1\leq i,j\leq n}\limits E_{i,j}t^s\Big)\oplus\mathbb{C}c\oplus\mathbb{C}z
\end{equation*}
whose commutator relations are determined by
\begin{gather*}
[E_{p,q}t^s,E_{i,j}t^u]=\delta_{i,q}E_{p,j}t^{s+u}-\delta_{p,j}E_{i,q}t^{s+u}+s\delta_{i,q}\delta_{p,j}\delta_{s+u,0}c+s\delta_{p,q}\delta_{i,j}\delta_{s+u,0}z,\\
\text{$c$ and $z$ are central elements}.
\end{gather*}
Let us set $U^1(\widehat{\mathfrak{gl}}(n)^c)$ as $U(\widehat{\mathfrak{gl}}(n)^c)/U(\widehat{\mathfrak{gl}}(n)^c)(z-1)$.
Next, we introduce a completion of $U^1(\widehat{\mathfrak{gl}}(n)^c)$ following \cite{MNT}. We set the grading of $U^1(\widehat{\mathfrak{gl}}(n))$ as $\text{deg}(E_{i,j}t^s)=s$ and $\text{deg}(c)=0$.
Then, $U^1(\widehat{\mathfrak{gl}}(n)^c)$ becomes a graded algebra and we denote the set of the degree $d$ elements of $U^1(\widehat{\mathfrak{gl}}(n)^c)$ by $U^1(\widehat{\mathfrak{gl}}(n)^c)_d$. 
We obtain the completion
\begin{equation*}
U^1(\widehat{\mathfrak{gl}}(n)^c)_{{\rm comp}}=\bigoplus_{d\in\mathbb{Z}}U^1(\widehat{\mathfrak{gl}}(n)^c)_{{\rm comp},d},
\end{equation*}
where
\begin{equation*}
U^1(\widehat{\mathfrak{gl}}(n))^c_{{\rm comp},d}=\plim[N]U^1(\widehat{\mathfrak{gl}}(n)^c)_{d}/\sum_{r>N}\limits U^1(\widehat{\mathfrak{gl}}(n)^c)_{d-r}U^1(\widehat{\mathfrak{gl}}(n)^c)_{r}.
\end{equation*}
The evaluation map for the Guay's affine Yangian is a non-trivial homomorphism from the Guay's affine Yangian to $U^1(\widehat{\mathfrak{gl}}(n)^c)_{{\rm comp}}$. 
Here after, we denote
\begin{gather*}
h_i=\begin{cases}
E_{n,n}-E_{1,1}+c&(i=0),\\
E_{ii}-E_{i+1,i+1}&(1\leq i\leq n-1),
\end{cases}\\
x^+_i=\begin{cases}
E_{n,1}t&(i=0),\\
E_{i,i+1}&(1\leq i\leq n-1),
\end{cases}
\quad x^-_i=\begin{cases}
E_{1,n}t^{-1}&(i=0),\\
E_{i+1,i}&(1\leq i\leq n-1).
\end{cases}
\end{gather*}
\begin{Theorem}[Section 6 in \cite{Gu1} and Theorem~3.8 in \cite{K1}]\label{thm:main}
Set $c=\dfrac{-n\hbar-\ve}{\hbar}$.
Then, there exists an algebra homomorphism 
\begin{equation*}
\ev_{\hbar,\ve} \colon Y_{\hbar,\ve}(\widehat{\mathfrak{sl}}(n)) \to U^1(\widehat{\mathfrak{gl}}(n)^c)_{{\rm comp}}
\end{equation*}
uniquely determined by 
\begin{gather*}
	\ev_{\hbar,\ve}(X_{i,0}^{+}) = x_{i}^{+}, \quad \ev_{\hbar,\ve}(X_{i,0}^{-}) = x_{i}^{-},\quad \ev_{\hbar,\ve}(H_{i,0}) = h_{i},
\end{gather*}
\begin{gather*}
	\ev_{\hbar,\ve}(H_{i,1}) = \begin{cases}
		\hbar ch_{0} -\hbar E_{n,n} (E_{1,1}-c) \\
		\ +\hbar \displaystyle\sum_{s \geq 0} \limits\displaystyle\sum_{k=1}^{n}\limits E_{n,k}t^{-s} E_{k,n}t^s-\hbar \displaystyle\sum_{s \geq 0} \displaystyle\sum_{k=1}^{n}\limits E_{1,k}t^{-s-1} E_{k,1}t^{s+1}\\ \qquad\qquad\qquad\qquad\qquad\qquad\qquad\qquad\qquad\qquad\qquad\qquad\qquad\qquad\text{ if $i = 0$},\\
		-\dfrac{i}{2}\hbar h_{i} -\hbar E_{i,i}E_{i+1,i+1} \\
		\quad+ \hbar\displaystyle\sum_{s \geq 0}  \limits\displaystyle\sum_{k=1}^{i}\limits E_{i,k}t^{-s} E_{k,i}t^s+\hbar\displaystyle\sum_{s \geq 0} \limits\displaystyle\sum_{k=i+1}^{n}\limits E_{i,k}t^{-s-1} E_{k,i}t^{s+1}\\
\quad-\hbar\displaystyle\sum_{s \geq 0}\limits\displaystyle\sum_{k=1}^{i}\limits E_{i+1,k}t^{-s}E_{k,i+1}t^s-\hbar\displaystyle\sum_{s \geq 0}\limits\displaystyle\sum_{k=i+1}^{n} \limits E_{i+1,k}t^{-s-1}E_{k,i+1}t^{s+1}\\
\qquad\qquad\qquad\qquad\qquad\qquad\qquad\qquad\qquad\qquad\qquad\qquad\qquad\qquad \text{ if $i \neq 0$},
	\end{cases}
\end{gather*}
\begin{align*}
\ev_{\hbar,\ve}(X^+_{i,1})&=\begin{cases}
\hbar cx_{0}^{+} + \hbar \displaystyle\sum_{s \geq 0} \limits\displaystyle\sum_{k=1}^{n}\limits E_{n,k}t^{-s} E_{k,1}t^{s+1}\\
\qquad\qquad\qquad\qquad\qquad\qquad\qquad\qquad\qquad\qquad\qquad\qquad\qquad\qquad \text{ if $i = 0$},\\
-\dfrac{i}{2}\hbar x_{i}^{+}+ \hbar \displaystyle\sum_{s \geq 0}\limits\displaystyle\sum_{k=1}^i\limits E_{i,k}t^{-s} E_{k,i+1}t^s+\hbar \displaystyle\sum_{s \geq 0}\limits\displaystyle\sum_{k=i+1}^{n}\limits E_{i,k}t^{-s-1} E_{k,i+1}t^{s+1}\\
\qquad\qquad\qquad\qquad\qquad\qquad\qquad\qquad\qquad\qquad\qquad\qquad\qquad\qquad \text{ if $i \neq 0$},
\end{cases}
\end{align*}
\begin{align*}
\ev_{\hbar,\ve}(X^-_{i,1})&=\begin{cases}
\hbar cx_{0}^{-} +\hbar \displaystyle\sum_{s \geq 0} \limits\displaystyle\sum_{k=1}^{n}\limits E_{1,k}t^{-s-1} E_{k,n}t^{s},\\
\qquad\qquad\qquad\qquad\qquad\qquad\qquad\qquad\qquad\qquad\qquad\qquad\qquad\qquad \text{ if $i = 0$},\\
-\dfrac{i}{2}\hbar x_{i}^{-}+\hbar \displaystyle\sum_{s \geq 0}\limits\displaystyle\sum_{k=1}^i\limits E_{i+1,k}t^{-s} E_{k,i}t^{s}+ \hbar \displaystyle\sum_{s \geq 0}\limits\displaystyle\sum_{k=i+1}^{n}\limits E_{i+1,k}t^{-s-1} E_{k,i}t^{s+1}\\
\qquad\qquad\qquad\qquad\qquad\qquad\qquad\qquad\qquad\qquad\qquad\qquad\qquad\qquad \text{ if $i \neq 0$}.
\end{cases}
\end{align*}
\end{Theorem}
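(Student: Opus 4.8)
The plan is to use the presentation of $Y_{\hbar,\ve}(\widehat{\mathfrak{sl}}(n))$ by generators and relations directly: since the theorem asserts only the existence (and uniqueness) of the homomorphism $\ev_{\hbar,\ve}$, it suffices to check that the prescribed images of the generators $X^{\pm}_{i,0}, X^{\pm}_{i,1}, H_{i,0}, H_{i,1}$ are well-defined elements of the completion $U^1(\widehat{\mathfrak{gl}}(n)^c)_{{\rm comp}}$ and that these images satisfy every defining relation \eqref{Eq2.1}--\eqref{Eq2.10}. Uniqueness is then automatic, as the listed elements generate the whole algebra.

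First I would settle well-definedness. With respect to the grading $\deg(E_{i,j}t^s)=s$, each series defining $\ev_{\hbar,\ve}(H_{i,1})$ and $\ev_{\hbar,\ve}(X^{\pm}_{i,1})$ is homogeneous of a single degree; for instance every summand of $\sum_{s\geq0}\sum_k E_{n,k}t^{-s}E_{k,n}t^s$ has degree $0$, and every summand of $\sum_{s\geq0}\sum_k E_{n,k}t^{-s}E_{k,1}t^{s+1}$ has degree $1$. Moreover each series is already written in normal-ordered form, with the factor of positive degree on the right, so the summand with parameter $s$ lies in $U^1_{d-r}U^1_{r}$ with $r$ growing to infinity with $s$. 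Hence, modulo the defining ideal $\sum_{r>N}U^1_{d-r}U^1_{r}$ of the $N$-th truncation, only finitely many summands survive; the partial sums stabilize and the series converges in $U^1(\widehat{\mathfrak{gl}}(n)^c)_{{\rm comp}}$.

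Next I would dispatch the relations involving only the degree-zero generators, namely \eqref{Eq2.2}, the $r=0$ case of \eqref{Eq2.4}, the $r=s=0$ case of \eqref{Eq2.1}, and the Serre relations \eqref{Eq2.10}. These hold because $\{x^{\pm}_i,h_i\}_{0\leq i\leq n-1}$ are exactly the Chevalley generators of the affine Lie algebra $\widehat{\mathfrak{sl}}(n)$ sitting inside $\widehat{\mathfrak{gl}}(n)^c$, with the central element $c$ realizing the affine central charge through $h_0=E_{n,n}-E_{1,1}+c$; this is a finite computation using the bracket formula for $\widehat{\mathfrak{gl}}(n)^c$. I would then turn to the mixed relations \eqref{Eq2.3}, the $r=1$ case of \eqref{Eq2.4}, \eqref{Eq2.5}--\eqref{Eq2.7}, and the $r\neq s$ cases of \eqref{Eq2.1}, each of which brackets one degree-one generator against a degree-zero one. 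Each reduces to a single commutator of a Lie-algebra element against an infinite series; almost every summand commutes, and the few surviving terms telescope to the prescribed right-hand side. The shift by $\ve+\frac{n}{2}\hbar$ appearing in \eqref{Eq2.6} and \eqref{Eq2.7} is produced by the boundary summands at the extreme indices $0$ and $n-1$ together with the substitution $c=\frac{-n\hbar-\ve}{\hbar}$ and $z=1$.

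The main obstacle is the remaining relations \eqref{Eq2.8}, \eqref{Eq2.9}, and the $r=s=1$ case of \eqref{Eq2.1}, i.e.\ $[\ev_{\hbar,\ve}(H_{i,1}),\ev_{\hbar,\ve}(H_{j,1})]=0$: here both arguments of the bracket are infinite series, so the commutator expands into a double sum over $s,u\geq0$, and one must control the telescoping cancellations together with the central contributions coming from the $c$- and $z$-terms of the bracket on $\widehat{\mathfrak{gl}}(n)^c$. The quadratic right-hand sides $\pm a_{ij}\frac{\hbar}{2}\{x^{\pm}_i,x^{\pm}_j\}$ and the additional linear term $-(\ve+\frac{n}{2}\hbar)[x^{\pm}_0,x^{\pm}_{n-1}]$ in \eqref{Eq2.9} emerge only after these cancellations, and matching the coefficient of each surviving monomial is precisely what forces the value $c=\frac{-n\hbar-\ve}{\hbar}$. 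I expect that projecting onto each graded piece of the completion reduces \eqref{Eq2.8}--\eqref{Eq2.9} to a finite identity in $\widehat{\mathfrak{gl}}(n)^c$, after which the verification becomes bookkeeping of normal-ordered monomials.
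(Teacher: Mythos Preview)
The paper does not supply its own proof of this theorem: it is quoted as a result from Section~6 of \cite{Gu1} and Theorem~3.8 of \cite{K1}, with no argument reproduced. Your strategy---checking that the prescribed images lie in the completion and then verifying the defining relations \eqref{Eq2.1}--\eqref{Eq2.10} one by one---is exactly the method used in those references, especially \cite{K1}, so in that sense your plan aligns with the literature even though there is nothing in the present paper to compare against.

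One caution: your treatment of the genuinely quadratic relations \eqref{Eq2.8}, \eqref{Eq2.9}, and $[H_{i,1},H_{j,1}]=0$ is still only a heuristic. Saying that you ``expect'' the graded projection reduces each to a finite identity and that the rest is ``bookkeeping'' understates the work; in \cite{K1} these verifications occupy several pages of explicit double-sum manipulations, and the cancellations are not automatic---they rely on carefully pairing terms across the two infinite series and tracking the central $c$- and $z$-contributions. Your outline is correct in shape, but a complete proof would have to carry these computations out rather than defer them.
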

We recall the coproduct for the Guay's affine Yangian.
Let $\Delta^+$ (resp. $\Delta^+_{\re}$) be the set of positive roots (resp. positive real roots) of $\widehat{\mathfrak{sl}}(n)$. We denote the multiplicity of a root $\gamma$ by $p(\gamma)$. We take root vectors $\{x_{\pm\gamma}^{(r)} \mid 1\leq r\leq p(\gamma)\}$ for $\gamma\in\Delta^+$ satisfying $(x^{(r)}_{\gamma},x^{(s)}_{-\gamma})=\delta_{r,s}$, where $(\ ,\ )$ is the standard invariant symmetric bilinear form. We denote the simple roots of $\widehat{\mathfrak{sl}}(n)$ by $\{\alpha_i\mid 0\leq i\leq n-1\}$.

By Theorem 6.1 in \cite{Gu1} and Theorem 6.9 in \cite{GRW}, we have an embedding $\xi$ from $U(\widehat{\mathfrak{sl}}(n))\subset U(\widehat{\mathfrak{gl}}(n))^c$ to $Y_{\hbar,\ve}(\widehat{\mathfrak{sl}}(n))$ determined by
\begin{gather*}
\xi(h_i)=H_{i,0},\qquad\xi(x^\pm_i)=X^\pm_{i,0}.
\end{gather*}
We identify $U(\widehat{\mathfrak{sl}}(n))$ and its image via $\xi$. 

We set the degree of the Guay's affine Yangian as follows;
\begin{equation*}
\text{deg}(H_{i,r})=0,\quad\text{deg}(X^\pm_{i,r})=\pm\delta_{i,0}.
\end{equation*}
By this degree, we can define the standard degree completion of $Y_{\hbar,\ve}(\widehat{\mathfrak{sl}}(n))^{\otimes 2}$. We denote it by $Y_{\hbar,\ve}(\widehat{\mathfrak{sl}}(n))\widehat{\otimes} Y_{\hbar,\ve}(\widehat{\mathfrak{sl}}(n))$.
\begin{Theorem}[Theorem~5.2 in \cite{GNW}]
There exists an algebra homomorphism
\begin{equation*}
\Delta\colon Y_{\hbar,\ve}(\widehat{\mathfrak{sl}}(n))\to Y_{\hbar,\ve}(\widehat{\mathfrak{sl}}(n))\widehat{\otimes} Y_{\hbar,\ve}(\widehat{\mathfrak{sl}}(n))
\end{equation*}
determined by
\begin{gather*}
\Delta(H_{i,0})=H_{i,0}\otimes1+1\otimes H_{i,0},\\
\Delta(X^\pm_{i,0})=X^\pm_{i,0}\otimes1+1\otimes X^\pm_{i,0},\\
\begin{align*}
\Delta(H_{i,1})&=H_{i,1}\otimes1+1\otimes H_{i,1}\\
&\quad+\hbar(H_{i,0}\otimes H_{i,0}-\displaystyle\sum_{\gamma\in\Delta^+_{\re}}\limits (\alpha_i,\gamma)x^{(1)}_{-\gamma}\otimes x^{(1)}_\gamma),\\
\Delta(X^+_{i,1})&=X^+_{i,1}\otimes1+1\otimes X^+_{i,1}\\
&\quad+\hbar(H_{i,0}\otimes X^+_{i,0}-\displaystyle\sum_{\gamma\in\Delta^+}\limits\sum_{r=1}^{p(\gamma)}\limits x^{(r)}_{-\gamma}\otimes[x^+_i,x^{(r)}_\gamma]),\\
\Delta(X^-_{0,1})&=X^-_{i,1}\otimes1+1\otimes X^-_{i,1}\\
&\quad+\hbar(H_{i,0}\otimes X^+_{i,0}+\displaystyle\sum_{\gamma\in\Delta^+}\limits\sum_{r=1}^{p(\gamma)}\limits [x^-_i,x^{(r)}_{-\gamma}]\otimes x^{(r)}_\gamma).
\end{align*}
\end{gather*}
\end{Theorem}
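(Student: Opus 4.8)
The plan is to verify that the assignment on generators extends to an algebra homomorphism by checking that the proposed images satisfy each defining relation \eqref{Eq2.1}--\eqref{Eq2.10}. Since $H_{i,0},X^{\pm}_{i,0},H_{i,1},X^{\pm}_{i,1}$ generate $Y_{\hbar,\ve}(\widehat{\mathfrak{sl}}(n))$, any such extension is unique, so the whole content is well-definedness. I would organise the relations according to whether they involve only level-zero generators or also a level-one generator.

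For the level-zero relations the verification is free. Relations \eqref{Eq2.2}, \eqref{Eq2.10}, the $r=s=0$ case of \eqref{Eq2.1}, and the $r=0$ case of \eqref{Eq2.4} involve only $H_{i,0},X^{\pm}_{i,0}$. On the subalgebra they generate, identified with $U(\widehat{\mathfrak{sl}}(n))$ through $\xi$, the map $\Delta$ restricts to the primitive coproduct $\Delta_0\colon a\mapsto a\otimes 1+1\otimes a$, which is the standard cocommutative bialgebra comultiplication on $U(\widehat{\mathfrak{sl}}(n))$ and hence an algebra homomorphism; thus these relations are automatically preserved.

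The substance lies in the relations carrying a level-one generator, and the organising tool is the canonical invariant tensor
\[
\Omega^-=\sum_{\gamma\in\Delta^+}\sum_{r=1}^{p(\gamma)}x^{(r)}_{-\gamma}\otimes x^{(r)}_\gamma,\qquad \Omega=\Omega^-+\sum_{\gamma\in\Delta^+}\sum_{r=1}^{p(\gamma)}x^{(r)}_{\gamma}\otimes x^{(r)}_{-\gamma}+\Omega_0,
\]
where $\Omega_0$ is the Cartan part dual to the invariant form; its defining feature is the $\widehat{\mathfrak{sl}}(n)$-invariance $[\Delta_0(a),\Omega]=0$. I would first record that the correction terms are $\ad$ of simple generators applied to $\Omega^-$: using $[h_i,x^{(r)}_\gamma]=(\alpha_i,\gamma)x^{(r)}_\gamma$ one gets $\sum_\gamma(\alpha_i,\gamma)x^{(1)}_{-\gamma}\otimes x^{(1)}_\gamma=(1\otimes\ad h_i)\Omega^-$ (imaginary roots drop out, so $\Delta^+_{\re}$ suffices) and $\sum_{\gamma}\sum_r x^{(r)}_{-\gamma}\otimes[x^{+}_i,x^{(r)}_\gamma]=(1\otimes\ad x^{+}_i)\Omega^-$; moreover a direct computation gives $\Delta(\tilde H_{i,1})=\tilde H_{i,1}\otimes 1+1\otimes\tilde H_{i,1}-\hbar(1\otimes\ad h_i)\Omega^-$, the term $H_{i,0}\otimes H_{i,0}$ cancelling against $-\tfrac{\hbar}{2}\Delta(H_{i,0})^2$. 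Because the level-zero generators appearing in the non-affine cases of \eqref{Eq2.1}, \eqref{Eq2.3}, \eqref{Eq2.4}, \eqref{Eq2.5} and \eqref{Eq2.8} are primitive, inserting the images produces a level-zero part that holds by the corresponding relation inside $Y_{\hbar,\ve}(\widehat{\mathfrak{sl}}(n))$ together with a single order-$\hbar$ remainder; matching that remainder reduces to identities of the form $[\Delta_0(a),\Omega^-]$, evaluated using the weight-zero vanishing $[\Delta_0(h_i),\Omega^-]=0$ and the invariance of $\Omega$. Only in $[\Delta(H_{i,1}),\Delta(H_{j,1})]$ does an order-$\hbar^2$ bracket of the two corrections survive, and its vanishing follows from the commutativity of the Cartan-twisted halves of $\Omega$.

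Two points require care. First, convergence: with $\deg X^{\pm}_{i,r}=\pm\delta_{i,0}$ only finitely many root vectors $x_{\pm\gamma}$ contribute in each fixed total degree, so the infinite sums over $\Delta^+$ — including imaginary roots of multiplicity $p(\gamma)$ — and the brackets taken against them genuinely lie in the degreewise completion $Y_{\hbar,\ve}(\widehat{\mathfrak{sl}}(n))\widehat{\otimes}Y_{\hbar,\ve}(\widehat{\mathfrak{sl}}(n))$. Second, and this is the main obstacle, the relations \eqref{Eq2.6}, \eqref{Eq2.7} and \eqref{Eq2.9} attached to the affine pair $(0,n-1)$ carry the extra scalar $\ve+\tfrac{n}{2}\hbar$ and are not symmetric in the two tensor factors. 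Here the cocommutativity-breaking contribution of $\Omega$ coming from the imaginary roots and from the Cartan part $\Omega_0$ must reproduce exactly this $\ve$-dependent shift; tracking the coefficient of the affine root $\delta$ and matching it to $\ve+\tfrac{n}{2}\hbar$ is the delicate computation that carries the real content of the theorem.
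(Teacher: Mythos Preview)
The paper does not prove this theorem at all: it is stated as Theorem~5.2 of \cite{GNW} and quoted without argument, and later (in Appendix~B) the paper explicitly invokes ``a similar proof of Theorem~5.2 in \cite{GNW}'' as a black box. So there is no proof in the paper to compare your proposal against.

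That said, your outline is essentially the strategy of \cite{GNW}: reduce to the primitive coproduct on the level-zero subalgebra $U(\widehat{\mathfrak{sl}}(n))$, package the level-one corrections as $(1\otimes\ad a)\Omega^-$ for $a\in\{h_i,x^{\pm}_i\}$, and exploit $[\Delta_0(a),\Omega]=0$ to reduce the mixed relations to manageable remainders. Two places where your sketch is optimistic relative to the actual work in \cite{GNW}: the verification of $[\Delta(H_{i,1}),\Delta(H_{j,1})]=0$ does not reduce to a one-line ``commutativity of Cartan-twisted halves of $\Omega$''---it is a genuine and somewhat lengthy computation (the analogue for $\Delta^{a,b}$ occupies the paper's entire Appendix~B); and for the affine relations \eqref{Eq2.6}, \eqref{Eq2.7}, \eqref{Eq2.9} you correctly identify that the imaginary-root and Cartan contributions must produce the shift $\ve+\tfrac{n}{2}\hbar$, but carrying this out requires an explicit root-by-root bookkeeping that your sketch does not yet supply. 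As a plan it is sound; as a proof it would need those two computations written out.
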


\section{Extension to the new Yangian}
We extend the definition of the Guay's affine Yangian.
\begin{Definition}
Let $a\geq n$. We define 
$\widetilde{Y}^{a}_{\hbar,\ve}(\widehat{\mathfrak{sl}}(n))$ by the associative algebra whose generators are $\{H_{i,r},X^\pm_{i,r}\mid0\leq i\leq n-1,r\in\mathbb{Z}_{\geq0}\}$ and $\{e_{i,j}t^s,c_a\mid 1\leq i,j\leq n,s\in\mathbb{Z}\}$ with the following defining relations;
\begin{gather*}
\text{the defining relations \eqref{Eq2.1}-\eqref{Eq2.10}},\\
[e_{i,j}t^s,e_{u,v}t^w]=\delta_{j,u}e_{i,v}t^{s+w}-\delta_{i,v}e_{u,j}t^{s+w}+s\delta_{s+w,0}\delta_{i,v}\delta_{u,j}c_a+\delta_{i,j}\delta_{u,v},\\
\text{$c_a$ is a central element},\\
H_{i,0}=\begin{cases}
e_{n,n}-e_{1,1}+c_a\text{ if }i=0,\\
e_{i,i}-e_{i+1,i+1}\text{ if }i\neq 0,
\end{cases}\\
X^+_{i,0}=\begin{cases}
e_{n,1}t\text{ if }i=0,\\
e_{i,i+1}\text{ if }i\neq 0,
\end{cases}X^-_{i,0}=\begin{cases}
e_{1,n}t^{-1}\text{ if }i=0,\\
e_{i+1,i}\text{ if }i\neq 0.
\end{cases}
\end{gather*}
\end{Definition}
We set the degree on $\widetilde{Y}^{a}_{\hbar,\ve}(\widehat{\mathfrak{sl}}(n))$ as
\begin{equation*}
\text{deg}(H_{i,r})=0,\text{deg}(X^\pm_{i,r})=\pm \delta_{i,0},\text{deg}(e_{i,j}t^s)=s,\text{deg}(c_a)=0.
\end{equation*}
We define $\widehat{Y}^{a}_{\hbar,\ve}(\widehat{\mathfrak{sl}}(n))$ as the standard degreewise completion of $\widetilde{Y}^{a}_{\hbar,\ve}(\widehat{\mathfrak{sl}}(n))$. Let us define $\widehat{\ev}_{\hbar,\ve}(H_{i,1})$ as an element of $\widehat{Y}^{a}_{\hbar,\ve}(\widehat{\mathfrak{sl}}(n))$ in the same formula as the one in \eqref{thm:main}. By a direct computation, we obtain
\begin{align}
&\quad[\widehat{\ev}_{\hbar,\ve}(H_{i,1}),e_{v,j}t^w]\nonumber\\
&=\dfrac{i}{2}\hbar\delta_{i,j}e_{j,v}t^w-\dfrac{i}{2}\hbar\delta_{i+1,j}e_{j,v}t^w+\hbar \delta_{i,j}e_{v,j}t^we_{i+1,i+1}+\hbar\delta_{i+1,j} e_{i,i}e_{v,j}t^w\nonumber\\
&\quad-\hbar\displaystyle\sum_{s \geq 0} \limits\delta(j\leq i)e_{i,j}t^{-s}e_{v,i}t^{s+w}-\hbar\displaystyle\sum_{s \geq 0} \limits\sum_{u=1}^i\limits\delta_{i,j}e_{v,u}t^{w-s}e_{u,i}t^s\nonumber\\
&\quad-\hbar\displaystyle\sum_{s \geq 0} \limits\delta(j>i)e_{i,j}t^{-s-1}e_{v,i}t^{s+w+1}-\hbar\displaystyle\sum_{s \geq 0} \limits\displaystyle\sum_{u=i+1}^{n}\limits \delta_{i,j}e_{v,u}t^{w-s-1}e_{u,i}t^{s+1}\nonumber\\
&\quad+\hbar\displaystyle\sum_{s \geq 0}\limits \delta(j\leq i)e_{i+1,j}t^{-s}e_{v,i+1}t^{s+w}+\hbar\displaystyle\sum_{s \geq 0}\limits\displaystyle\sum_{u=1}^{i}\limits \delta_{i+1,j}e_{v,u}t^{w-s}e_{u,i+1}t^s\nonumber\\
&\quad+\hbar\displaystyle\sum_{s \geq 0}\limits\delta(j>i)e_{i+1,j}t^{-s-1}e_{v,i+1}t^{s+w+1}+\hbar\displaystyle\sum_{s \geq 0}\limits\displaystyle\sum_{u=i+1}^{n} \limits \delta_{i+1,j}e_{v,u}t^{w-s-1}e_{u,i+1}t^{s+1},\label{Eq2.111}\\
&\quad[\widehat{\ev}_{\hbar,\ve}(H_{i,1}),e_{j,v}t^w]\nonumber\\
&=-\dfrac{i}{2}\hbar\delta_{i,j}e_{j,v}t^w+\dfrac{i}{2}\hbar\delta_{i+1,j}e_{j,v}t^w-\hbar\delta_{i,j}e_{j,v}t^we_{i+1,i+1}-\hbar\delta_{i+1,j}e_{i,i}e_{j,v}t^w\nonumber\\
&\quad+\hbar\displaystyle\sum_{s \geq 0}  \limits\displaystyle\sum_{u=1}^{i}\limits\delta_{i,j}e_{i,u}t^{-s}e_{u,v}t^{s+w}+\hbar\displaystyle\sum_{s \geq 0}  \limits\delta(j\leq i)e_{i,v}t^{w-s}e_{j,i}t^s\nonumber\\
&\quad+\hbar\displaystyle\sum_{s \geq 0} \limits\displaystyle\sum_{u=i+1}^{n}\limits\delta_{i,j}e_{i,u}t^{-s-1}e_{u,v}t^{s+w+1}+\hbar\displaystyle\sum_{s \geq 0} \limits\delta(j>i) e_{i,v}t^{w-s-1}e_{j,i}t^{s+1}\nonumber\\
&\quad-\hbar\displaystyle\sum_{s \geq 0}\limits\displaystyle\sum_{u=1}^{i}\limits \delta_{i+1,j}e_{i+1,u}t^{-s}e_{u,v}t^{s+w}-\hbar\displaystyle\sum_{s \geq 0}\limits\delta(j\leq i)e_{i+1,v}t^{w-s}e_{j,i+1}t^s\nonumber\\
&\quad-\hbar\displaystyle\sum_{s \geq 0}\limits\displaystyle\sum_{u=i+1}^{n} \limits\delta_{i+1,j}e_{i+1,u}t^{-s-1}e_{u,v}t^{s+w+1}\nonumber\\
&\quad-\hbar\displaystyle\sum_{s \geq 0}\limits\delta(j>i)e_{i+1,v}t^{w-s-1}e_{j,i+1}t^{s+1}\label{Eq2.113}
\end{align}
for all $i\neq0$, $1\leq j\leq n$ and $n<v\leq b$. By a direct computation, we also obtain
\begin{align}
&\quad[\widehat{\ev}_{\hbar,\ve}(H_{0,1}),e_{v,j}t^w]\nonumber\\
&=-\hbar c_a\delta_{n,j}e_{v,j}t^w+\hbar c_a\delta_{1,j}e_{v,j}t^w +\hbar \delta_{1,j}e_{n,n}e_{v,j}t^w+\delta_{n,j}\hbar e_{v,j}t^w e_{1,1}-\delta_{n,j}\hbar c_ae_{v,j}t^w\nonumber\\
&\quad-\hbar \displaystyle\sum_{s \geq 0} \limits e_{n,j}t^{-s} e_{v,n}t^{s+w}-\hbar \displaystyle\sum_{s \geq 0} \limits\displaystyle\sum_{u=1}^{n}\limits \delta_{n,j}e_{v,u}t^{w-s} e_{u,n}t^s\nonumber\\
&\quad+\hbar \displaystyle\sum_{s \geq 0}  e_{1,j}t^{-s-1} e_{v,1}t^{w+s+1}+\hbar \displaystyle\sum_{s \geq 0} \displaystyle\sum_{u=1}^{n}\limits\delta_{j,1} e_{v,u}t^{w-s-1} e_{u,1}t^{s+1},\label{Eq2.115}\\
&\quad[\widehat{\ev}_{\hbar,\ve}(H_{0,1}),e_{j,v}t^w]\nonumber\\
&=\hbar c_a\delta_{j,n}e_{j,v}t^w-\hbar c_a\delta_{1,j}e_{j,v}t^w-\hbar\delta_{1,j} e_{n,n}e_{j,v}t^w-\hbar \delta_{j,n}e_{j,v}t^we_{1,1}+\hbar \delta_{j,n}c_ae_{j,v}t^w\nonumber\\
&\quad+\hbar \displaystyle\sum_{s \geq 0} \limits\displaystyle\sum_{u=1}^{n}\limits \delta_{j,n}e_{n,u}t^{-s} e_{u,v}t^{s+w}+\hbar \displaystyle\sum_{s \geq 0} \limits e_{n,v}t^{w-s} e_{j,n}t^s\nonumber\\
&\quad-\hbar \displaystyle\sum_{s \geq 0} \displaystyle\sum_{u=1}^{n}\limits \delta_{1,j}e_{1,u}t^{-s-1} e_{u,v}t^{w+s+1}-\hbar \displaystyle\sum_{s \geq 0}  e_{1,v}t^{w-s-1} e_{j,1}t^{s+1}\label{Eq2.118}
\end{align}
for all $1\leq j\leq n$ and $n<v\leq b$. 

We set an associative algebra $Y^{a}_{\hbar,\ve}(\widehat{\mathfrak{sl}}(n))$ as a quotient algebra divided by
\begin{gather}
[H_{i,1},e_{v,j}t^w]=[\widehat{\ev}_{\hbar,\ve}(H_{i,1}),e_{v,j}t^w],\label{Eq2.11}\\
[H_{i,1},e_{j,v}t^w]=[\widehat{\ev}_{\hbar,\ve}(H_{i,1}),e_{j,v}t^w],\label{Eq2.12}\\
[H_{i-1,1},e_{v,i}t^w]+[H_{i,1},e_{v,i}t^w]=[\widehat{\ev}_{\hbar,\ve}(H_{i-1,1}),e_{v,i}t^w]+[\widehat{\ev}_{\hbar,\ve}(H_{i,1}),e_{v,i}t^w],\label{Eq2.13}\\
[H_{i-1,1},e_{v,i}t^w]+[H_{i,1},e_{v,i}t^w]=[\widehat{\ev}_{\hbar,\ve}(H_{i-1,1}),e_{v,i}t^w]+[\widehat{\ev}_{\hbar,\ve}(H_{i,1}),e_{v,i}t^w],\label{Eq2.14}\\
[H_{0,1},e_{v,j}t^w]=[\widehat{\ev}_{\hbar,\ve}(H_{0,1}),e_{v,j}t^w],\label{Eq2.15}\\
[H_{0,1},e_{j,v}t^w]=[\widehat{\ev}_{\hbar,\ve}(H_{0,1}),e_{j,v}t^w],\label{Eq2.16}\\
[H_{0,1},e_{v,n}t^w]+[H_{n-1,1},e_{v,n}t^w]=[\widehat{\ev}_{\hbar,\ve}(H_{0,1}),e_{v,n}t^w]+[\widehat{\ev}_{\hbar,\ve}(H_{n-1,1}),e_{v,n}t^w],\label{Eq2.17}\\
[H_{0,1},e_{v,1}t^w]+[H_{1,1},e_{v,1}t^w]=[\widehat{\ev}_{\hbar,\ve}(H_{1,1}),e_{v,1}t^w]+[\widehat{\ev}_{\hbar,\ve}(H_{1,1}),e_{v,1}t^w],\label{Eq2.18}\\
[H_{0,1},e_{n,v}t^w]+[H_{n-1,1},e_{n,v}t^w]=[\widehat{\ev}_{\hbar,\ve}(H_{0,1}),e_{n,v}t^w]+[\widehat{\ev}_{\hbar,\ve}(H_{n-1,1}),e_{n,v}t^w],\label{Eq2.19}\\
[H_{0,1},e_{1,v}t^w]+[H_{1,1},e_{1,v}t^w]=[\widehat{\ev}_{\hbar,\ve}(H_{1,1}),e_{1,v}t^w]+[\widehat{\ev}_{\hbar,\ve}(H_{1,1}),e_{1,v}t^w].\label{Eq2.20}
\end{gather}
By the definition of $Y^{a}_{\hbar,\ve}(\widehat{\mathfrak{sl}}(n))$, we have two homomorphisms;
\begin{equation*}
\Psi_1\colon Y_{\hbar,\ve}(\widehat{\mathfrak{sl}}(n))\to Y^{a}_{\hbar,\ve}(\widehat{\mathfrak{sl}}(n))
\end{equation*}
determined by
\begin{equation*}
\Psi_1(H_{i,r})=H_{i,r},\qquad\Psi_1(X^\pm_{i,r})=X^\pm_{i,r}
\end{equation*}
and
\begin{equation*}
\Psi_2\colon U(\widehat{\mathfrak{gl}}(n)^{c_a})\to Y^{a}_{\hbar,\ve}(\widehat{\mathfrak{sl}}(n))
\end{equation*}
determined by
\begin{equation*}
\Psi_2(e_{i,j}t^s)=e_{i,j}t^s\qquad\Psi_2(c_a)=c_a.
\end{equation*}

By the definition of $Y^{a}_{\hbar,\ve}(\widehat{\mathfrak{sl}}(n))$, we find that we can construct a non-trivial homomorphism from $Y^{a}_{\hbar,\ve}(\widehat{\mathfrak{sl}}(n))$ to the standard degreewise completion of the universal enveloping algebra of $\widehat{\mathfrak{gl}}(a)$ as follows.
\begin{Theorem}\label{evaluation}
For $x\in\mathbb{C}$, there exists an algebra homomorphism
\begin{equation*}
\widetilde{\ev}_{\hbar,\ve}^x\colon Y^{a}_{\hbar,\ve}(\widehat{\mathfrak{sl}}(n))\to U(\widehat{\mathfrak{gl}}(n)^{c_a})_{\text{comp}}
\end{equation*}
determined by
\begin{gather*}
\widetilde{\ev}_{\hbar,\ve}^x(e_{i,j}t^s)=e_{i,j}t^s,\qquad
\widetilde{\ev}_{\hbar,\ve}^x(c_a)=-\dfrac{n\hbar+\ve}{\hbar}\\
\widetilde{\ev}_{\hbar,\ve}^x(H_{i,1})=\ev_{\hbar,\ve}(H_{i,1})+x H_{i,0},\\
\widetilde{\ev}_{\hbar,\ve}^x(X^\pm_{i,1})=\ev_{\hbar,\ve}(X^\pm_{i,1})+x X^\pm_{i,0}.
\end{gather*}
\end{Theorem}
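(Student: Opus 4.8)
The plan is to check that the prescribed assignment on generators respects every defining relation of $Y^{a}_{\hbar,\ve}(\widehat{\mathfrak{sl}}(n))$; since the assignment is visibly degree-preserving (degree $d$ is sent to degree $d$), this automatically yields continuity for the degreewise completions and hence extends the map to all of $\widehat{Y}^{a}_{\hbar,\ve}(\widehat{\mathfrak{sl}}(n))$. I would split the defining relations into three families: the Yangian relations \eqref{Eq2.1}--\eqref{Eq2.10}; the affine $\widehat{\mathfrak{gl}}$-relations among the $e_{i,j}t^{s}$ together with the centrality of $c_{a}$ and the identifications of $H_{i,0},X^{\pm}_{i,0}$ with the prescribed products of the $e_{i,j}t^{s}$; and finally the new relations \eqref{Eq2.11}--\eqref{Eq2.20}.

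For the first family, I would factor the restriction of $\widetilde{\ev}^{x}_{\hbar,\ve}$ to the image of $\Psi_{1}$ as $\ev_{\hbar,\ve}\circ\sigma_{x}$, where $\sigma_{x}$ is the spectral-shift endomorphism that fixes every degree-zero generator and sends $H_{i,1}\mapsto H_{i,1}+xH_{i,0}$ and $X^{\pm}_{i,1}\mapsto X^{\pm}_{i,1}+xX^{\pm}_{i,0}$. A short check shows $\sigma_{x}$ preserves \eqref{Eq2.1}--\eqref{Eq2.10}: the only nontrivial cases are \eqref{Eq2.5}--\eqref{Eq2.9}, where one uses \eqref{Eq2.4} to move the scalar $x$ past a degree-zero generator, and the $\ve$-dependent terms at the affine node cancel correctly. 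Composing the homomorphism $\sigma_{x}$ with the evaluation homomorphism of Theorem~\ref{thm:main} then preserves \eqref{Eq2.1}--\eqref{Eq2.10}. The affine relations and the centrality of $c_{a}$ are immediate, since $\widetilde{\ev}^{x}_{\hbar,\ve}$ is the identity on each $e_{i,j}t^{s}$ and sends $c_{a}$ to the scalar $-(n\hbar+\ve)/\hbar$, which is exactly the value of the central charge making the $\widehat{\mathfrak{gl}}(a)$-relations of the target consistent. The identifications are respected because $\ev_{\hbar,\ve}(H_{i,0})=h_{i}$ and $\ev_{\hbar,\ve}(X^{\pm}_{i,0})=x^{\pm}_{i}$ coincide with the prescribed $e$-expressions, and $\sigma_{x}$ fixes these degree-zero elements.

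The heart of the argument is the third family \eqref{Eq2.11}--\eqref{Eq2.20}. Each imposed relation has the shape $[\mathcal{H},e]=[\widehat{\ev}_{\hbar,\ve}(\mathcal{H}),e]$, where $e$ is a mixed generator $e_{v,j}t^{w}$ or $e_{j,v}t^{w}$ with $n<v\le a$ and $1\le j\le n$, and $\mathcal{H}$ is either a single $H_{i,1}$ (when $j\notin\{i,i+1\}$) or one of the sums $H_{i-1,1}+H_{i,1}$, $H_{0,1}+H_{n-1,1}$, $H_{0,1}+H_{1,1}$ (in the remaining diagonal cases). Because $\widehat{\ev}_{\hbar,\ve}(\mathcal{H})$ is given by the same $e$-polynomial as $\ev_{\hbar,\ve}(\mathcal{H})$, and the $e$-relations in $\widehat{Y}^{a}_{\hbar,\ve}(\widehat{\mathfrak{sl}}(n))$ agree with those in the target, applying $\widetilde{\ev}^{x}_{\hbar,\ve}$ sends the right-hand side to $[\ev_{\hbar,\ve}(\mathcal{H}),e]$; this matching is precisely the content of the explicit computations \eqref{Eq2.111}--\eqref{Eq2.118}. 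The left-hand side is sent to $[\ev_{\hbar,\ve}(\mathcal{H}),e]+x\,[\mathfrak{h},e]$, where $\mathfrak{h}$ is the corresponding sum of the $h_{i}$. Thus the relation is preserved exactly when $[\mathfrak{h},e]=0$, and this is where the choice of combinations is decisive: a one-line computation in $\widehat{\mathfrak{gl}}(a)$ gives $[h_{i},e_{v,j}t^{w}]=-\delta_{i,j}e_{v,i}t^{w}+\delta_{i+1,j}e_{v,i+1}t^{w}$ for $v>n$ (and symmetrically on the $e_{j,v}$ side), so each $\mathfrak{h}$ was arranged precisely so that these terms cancel. I expect this case-by-case cancellation, carried out in parallel over \eqref{Eq2.11}--\eqref{Eq2.20} and their $e_{j,v}$-counterparts, to be the main obstacle: not because any single verification is difficult, but because one must confirm that every imposed combination $\mathfrak{h}$ commutes with its mixed generator and that the bookkeeping of the long formulas \eqref{Eq2.111}--\eqref{Eq2.118} reproduces the target commutators term by term. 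Once all three families are verified, degree-preservation supplies the extension to the completion, giving the homomorphism $\widetilde{\ev}^{x}_{\hbar,\ve}$.
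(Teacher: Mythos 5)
Your proposal is correct and coincides with the paper's own (implicit) argument: the paper states Theorem~\ref{evaluation} with no written proof, asserting only that it holds ``by the definition of $Y^{a}_{\hbar,\ve}(\widehat{\mathfrak{sl}}(n))$'', and your verification is exactly the content of that assertion. Your two key points --- that on the image of $\Psi_1$ the map factors as $\ev_{\hbar,\ve}\circ\sigma_x$ through the standard shift endomorphism, and that the relations \eqref{Eq2.11}--\eqref{Eq2.20} are grouped precisely so that the corresponding combinations $\mathfrak{h}$ of the $h_i$ commute with the mixed generators $e_{v,j}t^w$, $e_{j,v}t^w$ (so the $x$-shift contributes nothing) --- are exactly why the quotient construction admits this evaluation map, even though the paper never spells this out.
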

We can construct a map corresponding to a coproduct. Let $a\geq b\geq n$. 
We take a degree for $Y^{a}_{\hbar,\ve}(\widehat{\mathfrak{sl}}(n))\otimes Y^{b}_{\hbar,\ve}(\widehat{\mathfrak{sl}}(n))$ determined by
\begin{gather*}
\text{deg}(H_{i,r}\otimes 1)=\text{deg}(1\otimes H_{i,r})=0,\\
\text{deg}(X^\pm_{i,r}\otimes1)=\text{deg}(1\otimes X^\pm_{i,r})=\pm \delta_{i,0},\\
\text{deg}(e_{i,j}t^s\otimes1)=\text{deg}(1\otimes e_{i,j}t^s)=s,\\
\text{deg}(c_a\otimes 1)=\text{deg}(1\otimes c_b)=0.
\end{gather*}
We set $Y^{a}_{\hbar,\ve}(\widehat{\mathfrak{sl}}(n))\widehat{\otimes} Y^{b}_{\hbar,\ve}(\widehat{\mathfrak{sl}}(n))$ as the standard degreewise completion of $Y^{a}_{\hbar,\ve}(\widehat{\mathfrak{sl}}(n))\otimes Y^{b}_{\hbar,\ve}(\widehat{\mathfrak{sl}}(n))$. Moreover, we set $Y^{a}_{\hbar,\ve}(\widehat{\mathfrak{sl}}(n))\widetilde{\otimes} Y^{b}_{\hbar,\ve}(\widehat{\mathfrak{sl}}(n))$ as a quotient algebra of $Y^{a}_{\hbar,\ve}(\widehat{\mathfrak{sl}}(n))\widehat{\otimes} Y^{b}_{\hbar,\ve}(\widehat{\mathfrak{sl}}(n))$ divided by 
\begin{equation*}
c_a\otimes1-1\otimes c_b=-(a-b).
\end{equation*}
\begin{Theorem}\label{Coproduct}
There exists an algebra homomorphism
\begin{equation*}
\Delta^{a,b}\colon Y^{b}_{\hbar,\ve-(a-b)\hbar}(\widehat{\mathfrak{sl}}(n))\to Y^{a}_{\hbar,\ve}(\widehat{\mathfrak{sl}}(n))\widetilde{\otimes} Y^{b}_{\hbar,\ve}(\widehat{\mathfrak{sl}}(n))
\end{equation*}
determined by
\begin{gather*}
\Delta^{a,b}(e_{i,j}t^s)=e_{i,j}t^s\otimes1+1\otimes \delta(i,j\leq b)e_{i,j}t^s,
\end{gather*}
\begin{gather*}
\Delta^{a,b}(H_{i,0})=H_{i,0}\otimes1+1\otimes H_{i,0},\\
\Delta^{a,b}(X^\pm_{i,0})=X^\pm_{i,0}\otimes1+1\otimes X^\pm_{i,0},
\end{gather*}
\begin{align*}
\Delta^{a,b}(H_{i,1})&=\begin{cases}
(H_{0,1}+B_0)\otimes 1+1\otimes H_{0,1}+A_0-F_0\text{ if }i=0,\\
(H_{i,1}+B_i)\otimes 1+1\otimes H_{0,1}+A_i-F_i\text{ if }i\neq0,
\end{cases}\\
\Delta^{a,b}(X^+_{i,1})&=\begin{cases}
(X^+_{0,1}+B^+_0)\otimes 1+1\otimes X^+_{0,1}+A^+_0-F^+_0\text{ if }i=0,\\
(X^+_{i,1}+B^+_i)\otimes 1+1\otimes X^+_{0,1}+A^+_i-F^+_i\text{ if }i\neq0,
\end{cases}\\
\Delta^{a,b}(X^-_{i,1})&=\begin{cases}
(X^-_{0,1}+B^-_0)\otimes 1+1\otimes X^-_{0,1}+A^-_0-F^-_0\text{ if }i=0,\\
(X^-_{i,1}+B^-_i)\otimes 1+1\otimes X^-_{0,1}+A^-_i-F^-_i\text{ if }i\neq0,
\end{cases}
\end{align*}
where 
\begin{align*}
F_i&=\begin{cases}
\hbar\displaystyle\sum_{w\in\mathbb{Z}}\limits\displaystyle\displaystyle\sum_{v=n+1}^b e_{v,i}t^w\otimes e_{i,v}t^{-w}-\hbar\sum_{w\in\mathbb{Z}}\limits\displaystyle\sum_{v=n+1}^b e_{v,i+1}t^w\otimes e_{i+1,v}t^{-w}\text{ if }i\neq0,\\
\hbar\displaystyle\sum_{w\in\mathbb{Z}}\limits\displaystyle\sum_{v=n+1}^b e_{v,n}t^w\otimes e_{n,v}t^{-w}-\hbar\displaystyle\sum_{w\in\mathbb{Z}}\limits\displaystyle\sum_{v=n+1}^b e_{v,1}t^w\otimes e_{1,v}t^{-w}\text{ if }i=0.
\end{cases}\\
F^+_i&=\begin{cases}
\hbar\displaystyle\sum_{w\in\mathbb{Z}}\limits\sum_{u=n+1}^b e_{u,1}t^{-w}\otimes e_{n,u}t^{w+1}\text{ if }i=0,\\
\hbar\displaystyle\sum_{w\in\mathbb{Z}}\limits\sum_{u=n+1}^b e_{u,i+1}t^{-w}\otimes e_{i,u}t^{w}\text{ if }i\neq0,
\end{cases}\\
F^-_i&=\begin{cases}
\hbar\displaystyle\sum_{w\in\mathbb{Z}}\limits\sum_{u=n+1}^b e_{u,n}t^{-w}\otimes e_{1,u}t^{w-1}\text{ if }i=0,\\
\hbar\displaystyle\sum_{w\in\mathbb{Z}}\limits\sum_{u=n+1}^b e_{u,i}t^{-w}\otimes e_{i+1,u}t^{w}\text{ if }i\neq0,
\end{cases}\\
A_i&=\begin{cases}
-\hbar(e_{1,1}\otimes e_{n,n}+e_{n,n}\otimes e_{1,1})+\hbar(e_{n,n}-e_{1,1})\otimes c_b+\hbar c_a\otimes(e_{n,n}-e_{1,1})+\hbar c_a\otimes c_b\\
\quad+\hbar\displaystyle\sum_{s \geq 0} \limits\displaystyle\sum_{u=1}^{n}\limits (-e_{u,n}t^{-s-1}\otimes e_{n,u}t^{s+1}+e_{n,u}t^{-s}\otimes e_{u,n}t^s)\\
\quad-\hbar\displaystyle\sum_{s \geq 0}\displaystyle\sum_{u=1}^{n}\limits (-e_{u,1}t^{-s}\otimes e_{1,u}t^{s}+e_{1,u}t^{-s-1}\otimes e_{u,1}t^{s+1})\\
\qquad\qquad\qquad\qquad\qquad\qquad\qquad\qquad\qquad\qquad\qquad\qquad\qquad\qquad \text{ if }i=0,\\
-\hbar(e_{i,i}\otimes e_{i+1,i+1}+e_{i+1,i+1}\otimes e_{i,i})\\
\quad+\hbar\displaystyle\sum_{s \geq 0}  \limits\displaystyle\sum_{u=1}^{i}\limits (-e_{u,i}t^{-s-1}\otimes e_{i,u}t^{s+1}+e_{i,u}t^{-s}\otimes e_{u,i}t^s)\\
\quad+\hbar\displaystyle\sum_{s \geq 0} \limits\displaystyle\sum_{u=i+1}^{n}\limits (-e_{u,i}t^{-s}\otimes e_{i,u}t^{s}+e_{i,u}t^{-s-1}\otimes e_{u,i}t^{s+1})\\
\quad-\hbar\displaystyle\sum_{s \geq 0}\limits\displaystyle\sum_{u=1}^{i}\limits (-e_{u,i+1}t^{-s-1}\otimes e_{i+1,u}t^{s+1}+e_{i+1,u}t^{-s}\otimes e_{u,i+1}t^s)\\
\quad-\hbar\displaystyle\sum_{s \geq 0}\limits\displaystyle\sum_{u=i+1}^{n} \limits (-e_{u,i+1}t^{-s}\otimes e_{i+1,u}t^{s}+e_{i+1,u}t^{-s-1}\otimes e_{u,i+1}t^{s+1})\\
\qquad\qquad\qquad\qquad\qquad\qquad\qquad\qquad\qquad\qquad\qquad\qquad\qquad\qquad \text{ if }i\neq0,
\end{cases}\\
A^+_i&=\begin{cases}
\hbar c_a\otimes e_{n,1}t+\hbar\displaystyle\sum_{s \geq 0} \limits\displaystyle\sum_{u=1}^{n}\limits e_{u,1}t^{-s}\otimes e_{n,u}t^{s+1} -\hbar\sum_{s \geq 0} \limits\displaystyle\sum_{u=1}^{n}\limits e_{n,u}t^{-s}\otimes e_{u,1}t^{s+1}\\
\qquad\qquad\qquad\qquad\qquad\qquad\qquad\qquad\qquad\qquad\qquad\qquad\qquad\qquad \text{ if $i=0$},\\
\hbar\displaystyle\sum_{s \geq 0}\limits\displaystyle\sum_{u=1}^i\limits (-e_{u,i+1}t^{-s-1}\otimes e_{i,u}t^{s+1}+e_{i,u}t^{-s}\otimes e_{u,i+1}t^s)\\
+\hbar\displaystyle\sum_{s \geq 0}\limits\displaystyle\sum_{u=i+1}^{n}\limits (-e_{u,i+1}t^{-s}\otimes e_{i,u}t^{s}+e_{i,u}t^{-s-1}\otimes e_{u,i+1}t^{s+1})\\
\qquad\qquad\qquad\qquad\qquad\qquad\qquad\qquad\qquad\qquad\qquad\qquad\qquad\qquad \text{ if $i \neq 0$},
\end{cases}\\
A^-_i&=\begin{cases}
\hbar e_{1,n}t^{-1}\otimes c_b+\hbar\displaystyle\sum_{s \geq 0} \limits\displaystyle\sum_{u=1}^{n}\limits (-e_{u,n}t^{-s-1}\otimes e_{1,u}t^{s}+e_{1,u}t^{-s-1}\otimes e_{u,n}t^s)\\
\qquad\qquad\qquad\qquad\qquad\qquad\qquad\qquad\qquad\qquad\qquad\qquad\qquad\qquad \text{ if $i = 0$},\\
\hbar\displaystyle\sum_{s \geq 0}\limits\displaystyle\sum_{u=1}^i\limits (-e_{u,i}t^{-s-1}\otimes e_{i+1,u}t^{s+1}+e_{i+1,u}t^{-s}\otimes e_{u,i}t^s)\\
+\hbar\displaystyle\sum_{s \geq 0}\limits\displaystyle\sum_{u=i+1}^{n}\limits (-e_{u,i}t^{-s}\otimes e_{i+1,u}t^s+e_{i+1,u}t^{-s-1}\otimes e_{u,i}t^{s+1})\\
\qquad\qquad\qquad\qquad\qquad\qquad\qquad\qquad\qquad\qquad\qquad\qquad\qquad\qquad\text{ if $i \neq 0$}.
\end{cases}\\
B_i&=\begin{cases}
\hbar\displaystyle\sum_{s\geq0}\limits\displaystyle\sum_{u=b+1}^a\limits (e_{u,n}t^{-s-1}e_{n,u}t^{s+1}+e_{n,u}t^{-s}e_{u,n}t^{s})\\
\quad-\hbar\displaystyle\sum_{s\geq0}\limits\displaystyle\sum_{u=b+1}^a\limits (e_{u,1}t^{-s-1}e_{1,u}t^{s+1}+e_{1,u}t^{-s}e_{u,1}t^{s})\\
\quad-\hbar\displaystyle\sum_{w\leq m-n}\limits W^{(1)}_{w,w}+\hbar(a-b)e_{n,n}t+\hbar (a-b)c_a\\
\qquad\qquad\qquad\qquad\qquad\qquad\qquad\qquad\qquad\qquad\qquad\qquad\qquad\qquad \text{ if $i = 0$},\\
\hbar\displaystyle\sum_{s \geq 0}  \limits\displaystyle\sum_{u=b+1}^{a}\limits e_{u,i}t^{-s-1}e_{i,u}t^{s+1}-\hbar\displaystyle\sum_{s \geq 0}  \limits\displaystyle\sum_{u=b+1}^{a}\limits e_{u,i+1}t^{-s}e_{i+1,u}t^{s}\\
\qquad\qquad\qquad\qquad\qquad\qquad\qquad\qquad\qquad\qquad\qquad\qquad\qquad\qquad\text{ if }i\neq0,
\end{cases}\\
B^+_i&=\begin{cases}
\hbar\displaystyle\sum_{s\geq0}\limits\displaystyle\sum_{u=b+1}^a\limits (e_{u,1}t^{-s-1}e_{n,u}t^{s+2}+e_{n,u}t^{1-s}e_{u,1}t^{s})\\
\qquad\qquad\qquad\qquad\qquad\qquad\qquad\qquad\qquad\qquad\qquad\qquad\qquad\qquad \text{ if $i = 0$},\\
\hbar\displaystyle\sum_{s\geq0}\limits\displaystyle\sum_{u=b+1}^a\limits (e_{u,i+1}t^{-s-1}e_{i,u}t^{s+1}+e_{i,u}t^{-s}e_{u,i+1}t^{s})\\
\qquad\qquad\qquad\qquad\qquad\qquad\qquad\qquad\qquad\qquad\qquad\qquad\qquad\qquad \text{ if $i \neq 0$},
\end{cases}\\
B^-_i&=\begin{cases}
\hbar\displaystyle\sum_{s\geq0}\limits\sum_{u=n+1}^a\limits (e_{u,n}t^{-s-1}e_{1,u}t^{s}+e_{1,u}t^{-1-s}e_{u,n}t^{s})+\hbar(a-b)e_{1,n}t^{-1}\\
\qquad\qquad\qquad\qquad\qquad\qquad\qquad\qquad\qquad\qquad\qquad\qquad\qquad\qquad \text{ if $i = 0$},\\
\hbar\displaystyle\sum_{s\geq0}\limits\sum_{u=b+1}^a\limits (e_{u,i}t^{-s-1}e_{i+1,u}t^{s}+e_{i+1,u}t^{-1-s}e_{u,i}t^{s})\\
\qquad\qquad\qquad\qquad\qquad\qquad\qquad\qquad\qquad\qquad\qquad\qquad\qquad\qquad\text{ if $i \neq 0$}.
\end{cases}
\end{align*}
\end{Theorem}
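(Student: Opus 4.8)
The plan is to verify that the assignment $\Delta^{a,b}$ on generators is compatible with every defining relation of the source algebra $Y^{b}_{\hbar,\ve-(a-b)\hbar}(\widehat{\mathfrak{sl}}(n))$; since that algebra is given by generators and relations, this is exactly what is needed to obtain the homomorphism. I would group the relations into three families of increasing difficulty and dispatch them in order.

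First I would treat the purely $\widehat{\mathfrak{gl}}(b)^{c_b}$ part. Because $\Delta^{a,b}(e_{i,j}t^s)=e_{i,j}t^s\otimes 1+1\otimes\delta(i,j\leq b)e_{i,j}t^s$ is the standard loop-algebra coproduct, the relations among the $e_{i,j}t^s$ reduce to the corresponding Lie-algebra identity; the only delicate point is the central term, which forces the bookkeeping $c_a\otimes1-1\otimes c_b=-(a-b)$, precisely the relation imposed when passing to $\widetilde{\otimes}$. I would then check that the prescribed $\Delta^{a,b}(H_{i,0})$ and $\Delta^{a,b}(X^\pm_{i,0})$ coincide with the values forced by the identifications $H_{0,0}=e_{n,n}-e_{1,1}+c$, $X^{+}_{0,0}=e_{n,1}t$, and so on. This is immediate and simultaneously disposes of the degree-zero Yangian relations \eqref{Eq2.1}, \eqref{Eq2.2}, \eqref{Eq2.4} and the Serre relations \eqref{Eq2.10}.

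The heart of the argument is the degree-one relations \eqref{Eq2.3}, \eqref{Eq2.5}--\eqref{Eq2.9} together with the mixed relations \eqref{Eq2.11}--\eqref{Eq2.20}. The key structural observation I would exploit is that the terms $A_i,A^{\pm}_i$ are exactly the Guay--Nakajima--Wendlandt coproduct correction $\hbar\bigl(H_{i,0}\otimes H_{i,0}-\sum_{\gamma}(\alpha_i,\gamma)x^{(1)}_{-\gamma}\otimes x^{(1)}_{\gamma}\bigr)$ of \cite{GNW}, rewritten in the matrix realization and truncated to the indices $1,\dots,n$; the terms $F_i,F^{\pm}_i$ extend the cross coupling to the indices $n+1,\dots,b$ which exist only in the extended algebra; and the terms $B_i,B^{\pm}_i$ are a first-factor self-correction over the indices $b+1,\dots,a$. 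With this decomposition, the degree-one Yangian relations follow from the already established coproduct of \cite{GNW} plus controlled correction terms, so that the only genuinely new computations concern $F$ and $B$. For the mixed relations I would compute $[\Delta^{a,b}(H_{i,1}),\Delta^{a,b}(e_{v,j}t^w)]$ directly, feeding in the commutator formulas \eqref{Eq2.111}--\eqref{Eq2.118} for $[\widehat{\ev}_{\hbar,\ve}(H_{i,1}),e_{\bullet}]$, and checking that the outcome equals $\Delta^{a,b}$ applied to the right-hand side of \eqref{Eq2.11}--\eqref{Eq2.20}.

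The main obstacle will be this last verification: tracking how the three families of correction terms interact when commuted past $\Delta^{a,b}(e_{v,j}t^w)$ inside the degreewise completion, and showing that the infinite sums telescope while the cross terms between the two tensor factors cancel exactly. In particular, the spectral-parameter shift $\ve\mapsto\ve-(a-b)\hbar$ in the source must emerge from the central element $c_a$ appearing in $A_0$ and $B_0$ combined with the quotient relation $c_a\otimes1-1\otimes c_b=-(a-b)$; confirming that this shift is consistent across all of \eqref{Eq2.5}--\eqref{Eq2.9} and \eqref{Eq2.17}--\eqref{Eq2.20}, the relations attached to the affine node $i=0$, is where the bookkeeping is most delicate. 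I expect no conceptual difficulty beyond this point, but the index- and degree-tracking in the completion is the substantial part of the work.
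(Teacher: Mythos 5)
Your proposal follows essentially the same route as the paper's proof: the paper likewise reduces the theorem to checking compatibility with the defining relations \eqref{Eq2.1}--\eqref{Eq2.10} and \eqref{Eq2.11}--\eqref{Eq2.20}, verifies the mixed relations by directly commuting $\Delta^{a,b}(H_{i,1})$ with $\Delta^{a,b}(e_{u,j}t^x)$ and matching the $A$-, $B$-, $F$-contributions term by term (Appendix A), and for $[H_{i,1},H_{j,1}]=0$ uses exactly your decomposition, citing the analogue of Theorem~5.2 of \cite{GNW} to kill the $A$-terms and handling the new $F$- and $B$-cross-terms in two separate lemmas (Appendix B). The only cosmetic difference is that for the $B$-term cancellation the paper shortcuts the computation by invoking the rectangular case from \cite{U5} and comparing inner products, rather than recomputing from scratch as you would.
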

The proof of Theorem~\ref{Coproduct} will be written in the appendix.
It is enough to show the compatibility with \eqref{Eq2.1}-\eqref{Eq2.10} and \eqref{Eq2.11}-\eqref{Eq2.20}. We only prove that $\Delta^{a,b}$ is compatible with $[H_{i,1},H_{j,1}]=0$, \eqref{Eq2.11} and \eqref{Eq2.12}. We show the compatibility with \eqref{Eq2.11} and \eqref{Eq2.12} in appendix A and the one with $[H_{i,1},H_{j,1}]=0$ in appendix B.
We can prove the other compatibilities in a similar way. 
\begin{Remark}
In the case when $a=b=n$, we have the following relation by the definition of $\Delta^{a,b}$;

\begin{equation*}
(\Psi_1\otimes\Psi_1)\circ\Delta=\Delta^{n,n}\circ\Psi.
\end{equation*}
By this remark, we find that $\Delta^{a,b}$ is the natural extension of $\Delta$.
\end{Remark}
\section{$W$-algebras of type $A$}
We fix some notations for vertex algebras. For a vertex algebra $V$, we denote the generating field associated with $v\in V$ by $v(z)=\displaystyle\sum_{n\in\mathbb{Z}}\limits v_{(n)}z^{-n-1}$. We also denote the OPE of $V$ by
\begin{equation*}
u(z)v(w)\sim\displaystyle\sum_{s\geq0}\limits \dfrac{(u_{(s)}v)(w)}{(z-w)^{s+1}}
\end{equation*}
for all $u, v\in V$. We denote the vacuum vector (resp.\ the translation operator) by $|0\rangle$ (resp.\ $\partial$).

We set
\begin{equation*}
N=\displaystyle\sum_{i=1}^lq_i,\qquad q_1\geq q_2\geq\cdots\geq q_l.
\end{equation*}
We set a basis of $\mathfrak{gl}(N)$ as $\mathfrak{gl}(N)=\displaystyle\bigoplus_{1\leq i,j\leq N}\limits\mathbb{C}e_{i,j}$. 
We also fix an inner product of $\mathfrak{gl}(N)$ determined by
\begin{equation*}
(e_{i,j}|e_{p,q})=k\delta_{i,q}\delta_{p,j}+\delta_{i,j}\delta_{p,q}.
\end{equation*}
\begin{gather*}
\col(i)=s\text{ if }\sum_{j=1}^{s-1}q_j<i\leq\sum_{i=1}^sq_j,\\
\row(i)=i-\sum_{j=1}^{\col(i)-1}q_j.
\end{gather*}
For all $1\leq i,j\leq N$, we take $1\leq \hat{i},\tilde{i}\leq N$ as
\begin{gather*}
\col(\hat{i})=\col(i)+1,\row(\hat{i})=\row(i),\\
\col(\tilde{j})=\col(j)-1,\row(\tilde{j})=\row(j).
\end{gather*}

We set a nilpotent element $f$ as 
\begin{equation*}
f=\sum_{1\leq j\leq N}\limits e_{\hat{j},j}.
\end{equation*}
We take an $\mathfrak{sl}(2)$-triple $(x,e,f)$, that is,
\begin{equation*}
x=[e,f],\ [x,e]=e,\ [x,f]=-f
\end{equation*}
satisfying that
\begin{equation*}
\{y\in\mathfrak{gl}(N)|[x,y]=sy\}=\bigoplus_{\col(j)-\col(i)=s}\limits e_{i,j}.
\end{equation*}
We can set the grading of $\mathfrak{gl}(N)$ by $\text{deg}(e_{i,j})=\col(j)-\col(i)$ (see Section 7 in \cite{BK}). We will denote $\bigoplus_{\col(j)-\col(i)=s}\limits e_{i,j}$ by $\mathfrak{g}_s$.

We consider two vertex algebras. The first one is the universal affine vertex algebra associated with a Lie subalgebra
\begin{align*}
\mathfrak{b}&=\bigoplus_{\substack{1\leq i,j\leq N\\\col(i)\geq\col(j)}}\limits \mathbb{C}e_{i,j}\subset\mathfrak{gl}(N)
\end{align*}
and its inner product
\begin{equation*}
\kappa(e_{i,j},e_{p,q})=\alpha_{\col(i)}\delta_{i,q}\delta_{p,j}+\delta_{i,j}\delta_{p,q},
\end{equation*}
where $\alpha_i=k+N-q_i$. 

The second one is the universal affine vertex algebra associated with a Lie superalgebra $\mathfrak{a}=\mathfrak{b}\oplus\displaystyle\bigoplus_{\substack{1\leq i,j\leq N\\\col(i)>\col(j)}}\limits\mathbb{C}\psi_{i,j}$  with the following commutator relations;
\begin{align*}
[e_{i,j},\psi_{p,q}]&=\delta_{j,p}\psi_{i,q}-\delta_{i,q}\psi_{p,j},\\
[\psi_{i,j},\psi_{p,q}]&=0,
\end{align*}
where $e_{i,j}$ is an even element and $\psi_{i,j}$ is an odd element.
We set the inner product on $\mathfrak{a}$ such that
\begin{gather*}
\widetilde{\kappa}(e_{i,j},e_{p,q})=\kappa(e_{i,j},e_{p,q}),\qquad\widetilde{\kappa}(e_{i,j},\psi_{p,q})=\widetilde{\kappa}(\psi_{i,j},\psi_{p,q})=0.
\end{gather*}
By the definition of $V^{\widetilde{\kappa}}(\mathfrak{a})$ and $V^\kappa(\mathfrak{b})$, $V^{\widetilde{\kappa}}(\mathfrak{a})$ contains $V^\kappa(\mathfrak{b})$.

By the PBW theorem, we can identify $V^{\widetilde{\kappa}}(\mathfrak{a})$ (resp. $V^\kappa(\mathfrak{b})$) with $U(\mathfrak{a}[t^{-1}])$ (resp. $U(\mathfrak{b}[t^{-1}])$). In order to simplify the notation, here after, we denote the generating field $(ut^{-1})(z)$ as $u(z)$. By the definition of $V^{\widetilde{\kappa}}(\mathfrak{a})$, generating fields $u(z)$ and $v(z)$ satisfy the OPE
\begin{gather}
u(z)v(w)\sim\dfrac{[u,v](w)}{z-w}+\dfrac{\widetilde{\kappa}(u,v)}{(z-w)^2}\label{OPE1}
\end{gather}
for all $u,v\in\mathfrak{a}$. 

For all $u\in \mathfrak{a}$, let $u[-s]$ be $ut^{-s}$. In this section, we regard $V^{\widetilde{\kappa}}(\mathfrak{a})$ (resp.\ $V^\kappa(\mathfrak{b})$) as a non-associative superalgebra whose product $\cdot$ is defined by
\begin{equation*}
u[-w]\cdot v[-s]=(u[-w])_{(-1)}v[-s].
\end{equation*}
We sometimes omit $\cdot$ and in order to simplify the notation. By \cite{KW1} and \cite{KW2}, a $W$-algebra $\mathcal{W}^k(\mathfrak{gl}(N),f)$ can be realized as a subalgebra of $V^\kappa(\mathfrak{b})$.

Let us define an odd differential $d_0 \colon V^{\kappa}(\mathfrak{b})\to V^{\widetilde{\kappa}}(\mathfrak{a})$ determined by
\begin{gather}
d_01=0,\\
[d_0,\partial]=0,\label{ee5800}
\end{gather}
\begin{align}
[d_0,e_{i,j}[-1]]
&=\sum_{\substack{\col(i)>\col(r)\geq\col(j)}}\limits e_{r,j}[-1]\psi_{i,r}[-1]-\sum_{\substack{\col(j)<\col(r)\leq\col(i)}}\limits \psi_{r,j}[-1]e_{i,r}[-1]\nonumber\\
&\quad+\delta(\col(i)>\col(j))\alpha_{\col(i)}\psi_{i,j}[-2]+\psi_{\hat{i},j}[-1]-\psi_{i,\tilde{j}}[-1].\label{ee1}
\end{align}
By using Theorem 2.4 in \cite{KRW}, we can define the $W$-algebra $\mathcal{W}^k(\mathfrak{g},f)$ as follows.
\begin{Definition}\label{T125}
The $W$-algebra $\mathcal{W}^k(\mathfrak{gl}(N),f)$ is the vertex subalgebra of $V^\kappa(\mathfrak{b})$ defined by
\begin{equation*}
\mathcal{W}^k(\mathfrak{gl}(N),f)=\{y\in V^\kappa(\mathfrak{b})\subset V^{\widetilde{\kappa}}(\mathfrak{a})\mid d_0(y)=0\}.
\end{equation*}
\end{Definition}

We construct two kinds of elements $W^{(1)}_{i,j}$ and $W^{(2)}_{i,j}$. 

\begin{Theorem}\label{Generators}
Let us set
\begin{align*}
W^{(1)}_{p,q}&=\sum_{\substack{1\leq i,j\leq N,\\\row(i)=p,\row(j)=q,\\\col(i)=\col(j)}}e_{i,j}[-1]\text{ for }q_l<p=q\leq q_1\text{ or }1\leq p,q\leq q_l,\\
W^{(2)}_{p,q}&=\sum_{\substack{\col(i)=\col(j)+1\\\row(i)=p,\row(j)=q}}e_{i,j}[-1]-\sum_{\substack{\col(i)=\col(j)\\\row(i)=p,\row(j)=q}}\gamma_{\col(i)}e_{i,j}[-2]\\
&\quad+\sum_{\substack{\col(u)=\col(j)<\col(i)=\col(v)\\\row(u)=\row(v)\leq q_l\\\row(i)=p,\row(j)=q}}\limits e_{u,j}[-1]e_{i,v}[-1]-\sum_{\substack{\col(u)=\col(j)\geq\col(i)=\col(v)\\\row(u)=\row(v)>q_l\\\row(i)=p,\row(j)=q}}\limits e_{u,j}[-1]e_{i,v}[-1]\\
&\qquad\qquad\qquad\qquad\qquad\qquad\qquad\qquad\qquad\qquad\qquad\qquad\qquad\text{ \qquad\qquad\qquad for }p,q\leq q_l,
\end{align*}
where
\begin{gather*}
\gamma_a=\sum_{u=a+1}^{l}\limits \alpha_{u}.
\end{gather*}
Then, the $W$-algebra $\mathcal{W}^k(\mathfrak{gl}(N),f)$ contains $W^{(1)}_{p,q}$ and $W^{(2)}_{p,q}$.
\end{Theorem}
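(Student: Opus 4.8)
The statement is equivalent, by Definition~\ref{T125}, to the two closedness conditions $d_0(W^{(1)}_{p,q})=0$ and $d_0(W^{(2)}_{p,q})=0$, since both elements manifestly lie in $V^\kappa(\mathfrak{b})$. The plan is to verify these by direct computation, using that $d_0$ is an odd derivation of the product $\cdot$, that $[d_0,\partial]=0$ by \eqref{ee5800} (so $d_0(e_{i,j}[-2])=\partial\,d_0(e_{i,j}[-1])$), and the explicit formula \eqref{ee1}. The key structural fact exploited throughout is that for a factor $e_{i,j}[-1]$ with $\col(i)=\col(j)$ the two sums over $r$ in \eqref{ee1} are empty and the $\delta(\col(i)>\col(j))$-term drops out, so $d_0(e_{i,j}[-1])=\psi_{\hat{i},j}[-1]-\psi_{i,\tilde{j}}[-1]$.

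For $W^{(1)}_{p,q}$ every summand has $\col(i)=\col(j)$, so only the shift terms $\psi_{\hat{i},j}[-1]-\psi_{i,\tilde{j}}[-1]$ survive. Summing along the column--diagonal these telescope, and the extreme contributions vanish because $\hat{i}$ (resp.\ $\tilde{j}$) leaves the range of existing columns, whence $d_0(W^{(1)}_{p,q})=0$. The two cases $q_l<p=q\leq q_1$ and $1\leq p,q\leq q_l$ differ only in the set of columns containing the rows $p,q$, and the argument is identical.

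For $W^{(2)}_{p,q}$ I would write it as $T_1-T_2+T_3-T_4$ following the four displayed sums and apply $d_0$ termwise, sorting the output by type: (a) pure $\psi[-2]$; (b) pure $\psi[-1]$; (c) mixed quadratic terms in one $\psi$ and one $e$. In $T_1$ the factor satisfies $\col(i)=\col(j)+1$, so \eqref{ee1} contributes the two Drinfeld--Sokolov sums (with $\col(r)=\col(j)$ and $\col(r)=\col(i)$), the term $\alpha_{\col(i)}\psi_{i,j}[-2]$, and shift terms; in $T_3,T_4$ both $e$-factors have equal columns, so $d_0$ acts on them purely by shift terms, and the Leibniz rule turns $d_0(T_3),d_0(T_4)$ into type (c). The three cancellations then run as follows. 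Type (b): the only $\psi[-1]$ terms come from the shift terms of $d_0(T_1)$; they have column gap two and telescope to boundary contributions that vanish exactly as for $W^{(1)}$. Type (a): the $\delta$-term of $d_0(T_1)$ gives $\sum\alpha_{\col(i)}\psi_{i,j}[-2]$, while $d_0(T_2)=\partial\,d_0(\cdots)$ gives $\sum\gamma_{\col(i)}(\psi_{\hat{i},j}[-2]-\psi_{i,\tilde{j}}[-2])$; reindexing the latter along the column--diagonal and using $\gamma_a-\gamma_{a+1}=\alpha_{a+1}$ shows that $-d_0(T_2)$ cancels the $\delta$-term of $d_0(T_1)$ with no leftover, and any further $\psi[-2]$ produced by reordering the type (c) products cancels within $d_0(T_3)-d_0(T_4)$. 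Type (c): the two Drinfeld--Sokolov sums of $d_0(T_1)$ run over all rows $r$ in a fixed column, and I would match them against the Leibniz output of $d_0(T_3)-d_0(T_4)$, where the split $\row(u)=\row(v)\leq q_l$ in $T_3$ versus $\row(u)=\row(v)>q_l$ in $T_4$ is precisely what divides the $r$-range so that each mixed term finds its partner (after reordering $\psi\cdot e$ into $e\cdot\psi$ by quasi-commutativity, whose only correction $\partial[\psi,e]$ feeds back into type (a)).

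The main obstacle is the type (c) matching: one must check that the quadratic corrections $T_3,T_4$ are chosen exactly so that, column by column and row by row, the mixed terms produced by the Drinfeld--Sokolov sums in $d_0(T_1)$ are reproduced with opposite sign by $d_0(T_3)-d_0(T_4)$, while the reordering corrections they generate contribute no net $\psi[-2]$ term. Keeping careful track of which index lies in which column, so that all boundary terms are genuinely absent (here $p,q\leq q_l$ guarantees that rows $p,q$ occur in every column), is where the bookkeeping is heaviest; the constant $\gamma_a=\sum_{u=a+1}^{l}\alpha_u$ is forced by the type (a) cancellation, exactly as in the rectangular special case of \cite{U4}, where $T_4$ vanishes.
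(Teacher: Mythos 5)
Your proposal follows the paper's own proof essentially verbatim: reduce to $d_0$-closedness via Definition~\ref{T125}, kill the shift terms $\psi_{\hat{i},j}[-1]-\psi_{i,\tilde{j}}[-1]$ of $W^{(1)}_{p,q}$ (and of $T_1$) by telescoping along columns, cancel the $\psi[-2]$ terms using $\gamma_a-\gamma_{a+1}=\alpha_{a+1}$ exactly as in \eqref{ee5.1} and \eqref{ee6}, and match the two Drinfeld--Sokolov sums of $d_0(T_1)$ against the Leibniz contributions of $T_3$ and $T_4$, whose row ranges $\row\leq q_l$ and $\row>q_l$ split the $r$-range precisely as in the paper's cancellations $\eqref{ee5}_1+\eqref{ee8}+\eqref{ee10}=0$ and $\eqref{ee5}_2+\eqref{ee7}+\eqref{ee9}=0$. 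The only (harmless) difference is your precaution about quasi-commutativity corrections: in the actual computation the matched quadratic terms already appear in the same order ($e\cdot\psi$ terms against the $e\cdot d_0(e)$ Leibniz output, and $\psi\cdot e$ terms against $d_0(e)\cdot e$), so no reordering corrections ever arise.
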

\begin{proof}
It is enough to show that $d_0(W^{(r)}_{p,q})=0$.
First, we show the case when $r=1$.
By \eqref{ee1}, if $\col(i)=\col(j)$, we obtain
\begin{align}
[d_0,e_{i,j}[-1]]
&=\psi_{\widehat{i},j}[-1]-\psi_{i,\widetilde{j}}[-1].\label{ee307}
\end{align}
By \eqref{ee307}, we obtain
\begin{align}
d_0(W^{(1)}_{p,q})&=\sum_{\substack{1\leq i,j\leq N,\\\row(i)=p,\row(j)=q,\\\col(i)=\col(j)}}(\psi_{\widehat{i},j}[-1]-\psi_{i,\widetilde{j}}[-1])\nonumber\\
&=\sum_{\substack{1\leq i,j\leq N,\\\row(i)=p,\row(j)=q,\\\col(i)=\col(j)}}\psi_{\widehat{i},j}[-1]-\sum_{\substack{1\leq i,j\leq N,\\\row(i)=p,\row(j)=q,\\\col(i)=\col(j)}}\psi_{i,\widetilde{j}}[-1].\label{ee380}
\end{align}
In the case when $q_l<p=q\geq q_1\text{ or }p,q\leq q_l$, we can rewrite the second term of \eqref{ee380} as
\begin{equation*}
-\sum_{\substack{1\leq x,y\leq N,\\\row(x)=p,\row(y)=q,\\\col(x)=\col(y)}}\psi_{\widehat{x},y}[-1]
\end{equation*}
by setting $\widehat{x}=i,y=\widetilde{j}$. Thus, we obtain $d_0(W^{(1)}_{i,j})=0$.

Next, we show the case when $r=2$. 
If $\col(i)=\col(j)+1=2$, by \eqref{ee1}, we also have
\begin{align}
&\quad[d_0,e_{i,j}[-1]]\nonumber\\
&=\sum_{\substack{\col(r)=\col(j)}}\limits e_{r,j}[-1]\psi_{i,r}[-1]-\sum_{\substack{\col(r)=\col(i)}}\limits \psi_{r,j}[-1]e_{i,r}[-1]\nonumber\\
&\quad+\alpha_{\col(i)}\psi_{i,j}[-2]+\psi_{\hat{i},j}[-1]-\psi_{i,\tilde{j}}[-1].\label{ee2}
\end{align}
By the definition of $W^{(2)}_{i,j}$, we can rewrite $d_0(W^{(2)}_{p,q})$ as
\begin{align}
&\sum_{\substack{\col(i)=\col(j)+1\\\row(i)=p,\row(j)=q}}d_0(e_{i,j}[-1])-\sum_{\substack{\col(i)=\col(j)\\\row(i)=p,\row(j)=q}}\gamma_{\col(i)}d_0(e_{i,j}[-2])\nonumber\\
&\quad+\sum_{\substack{\col(u)=\col(j)<\col(i)=\col(v)\\\row(u)=\row(v)\leq q_l\\\row(i)=p,\row(j)=q}}\limits d_0(e_{u,j}[-1])e_{i,v}[-1]\nonumber\\
&\quad+\sum_{\substack{\col(u)=\col(j)<\col(i)=\col(v)\\\row(u)=\row(v)\leq q_l\\\row(i)=p,\row(j)=q}}\limits e_{u,j}[-1]d_0(e_{i,v}[-1])\nonumber\\
&\quad-\sum_{\substack{\col(u)=\col(j)\geq\col(i)=\col(v)\\\row(u)=\row(v)>q_l\\\row(i)=p,\row(j)=q}}\limits d_0(e_{u,j}[-1])e_{i,v}[-1]\nonumber\\
&\quad-\sum_{\substack{\col(u)=\col(j)\geq\col(i)=\col(v)\\\row(u)=\row(v)>q_l\\\row(i)=p,\row(j)=q}}\limits e_{u,j}[-1]d_0(e_{i,v}[-1]).\label{ee3}
\end{align}
By \eqref{ee2}, we obtain
\begin{align}
&\quad\text{the first term of \eqref{ee3}}\nonumber\\
&=\sum_{\substack{\col(i)=\col(j)+1\\\row(i)=p,\row(j)=q}}\sum_{\substack{\col(r)=\col(j)}}\limits e_{r,j}[-1]\psi_{i,r}[-1]-\sum_{\substack{\col(i)=\col(j)+1\\\row(i)=p,\row(j)=q}}\sum_{\substack{\col(r)=\col(i)}}\limits \psi_{r,j}[-1]e_{i,r}[-1]\nonumber\\
&\quad+\sum_{\substack{\col(i)=\col(j)+1\\\row(i)=p,\row(j)=q}}\alpha_{\col(i)}\psi_{i,j}[-2]+\sum_{\substack{\col(i)=\col(j)+1\\\row(i)=p,\row(j)=q}}(\psi_{\hat{i},j}[-1]-\psi_{i,\tilde{j}}[-1]).\label{ee5}
\end{align}
Similarly to the proof of $d_0(W^{(1)}_{i,j})=0$, we find that the last term of the right hand side of \eqref{ee5} is equal to zero. Then, we have
\begin{align}
&\quad\text{the first term of \eqref{ee3}}\nonumber\\
&=\sum_{\substack{\col(i)=\col(j)+1\\\row(i)=p,\row(j)=q}}\sum_{\substack{\col(r)=\col(j)}}\limits e_{r,j}[-1]\psi_{i,r}[-1]-\sum_{\substack{\col(i)=\col(j)+1\\\row(i)=p,\row(j)=q}}\sum_{\substack{\col(r)=\col(i)}}\limits \psi_{r,j}[-1]e_{i,r}[-1]\nonumber\\
&\quad+\sum_{\substack{\col(i)=\col(j)+1\\\row(i)=p,\row(j)=q}}\alpha_{\col(i)}\psi_{i,j}[-2].\label{ee5.1}
\end{align}

By \eqref{ee307} and \eqref{ee5800}, we obtain
\begin{align}
&\quad\text{the second term of \eqref{ee3}}\nonumber\\
&=-\sum_{\substack{\col(i)=\col(j)\\\row(i)=p,\row(j)=q}}\gamma_{\col(i)}(\psi_{\widehat{i},j}[-2]-\psi_{i,\widetilde{j}}[-2])\nonumber\\
&=\sum_{\substack{\col(i)=\col(j)\\\row(i)=p,\row(j)=q}}(\gamma_{\col(\hat{i})}-\gamma_{\col(i)})\psi_{\widehat{i},j}[-2]\nonumber\\
&=-\sum_{\substack{\col(i)=\col(j)\\\row(i)=p,\row(j)=q}}\limits\alpha_{\col(\hat{i})}\psi_{\hat{i},j}[-2].\label{ee6}
\end{align}
By \eqref{ee307}, we obtain
\begin{align}
&\quad\text{the third term of the right hand side of \eqref{ee3}}\nonumber\\
&=\sum_{\substack{\col(u)=\col(j)<\col(i)=\col(v)\\\row(u)=\row(v)\leq q_l\\\row(i)=p,\row(j)=q}}\limits (\psi_{\hat{u},j}[-1]-\psi_{u,\tilde{j}}[-1])e_{i,v}[-1]\nonumber\\
&=\sum_{\substack{\col(u)+1=\col(j)+1=\col(i)=\col(v)\\\row(u)=\row(v)\leq q_l\\\row(i)=p,\row(j)=q}}\limits \psi_{\hat{u},j}[-1]e_{i,v}[-1]\label{ee7},\\
&\quad\text{the 4-th term of the right hand side of \eqref{ee3}}\nonumber\\
&=\sum_{\substack{\col(u)=\col(j)<\col(i)=\col(v)\\\row(u)=\row(v)\leq q_l\\\row(i)=p,\row(j)=q}}\limits e_{u,j}[-1](\psi_{\hat{i},v}[-1]-\psi_{i,\tilde{v}}[-1])\nonumber\\
&=-\sum_{\substack{\col(u)+1=\col(j)+1=\col(i)=\col(v)\\\row(u)=\row(v)\leq q_l\\\row(i)=p,\row(j)=q}}\limits e_{u,j}[-1]\psi_{i,\tilde{v}}[-1],\label{ee8}\\
&\quad\text{the 5-th term of the right hand side of \eqref{ee3}}\nonumber\\
&=-\sum_{\substack{\col(u)=\col(j)\geq\col(i)=\col(v)\\q_l<\row(u)=\row(v)\leq q_{\col(j)}\\\row(i)=p,\row(j)=q}}\limits (\psi_{\hat{u},j}[-1]-\psi_{u,\tilde{j}}[-1])e_{i,v}[-1]\nonumber\\
&=\sum_{\substack{\col(u)=\col(j)=\col(i)+1=\col(v)+1\\q_l<\row(u)=\row(v)\leq q_{\col(j)}\\\row(i)=p,\row(j)=q}}\limits\psi_{u,\tilde{j}}[-1]e_{i,v}[-1],\label{ee9}\\
&\quad\text{the 6-th term of the right hand side of \eqref{ee3}}\nonumber\\
&=-\sum_{\substack{\col(u)=\col(j)\geq\col(i)=\col(v)\\q_l<\row(u)=\row(v)\leq q_{\col(j)}\\\row(i)=p,\row(j)=q}}\limits e_{u,j}[-1](\psi_{\hat{i},v}[-1]-\psi_{i,\tilde{v}}[-1])\nonumber\\
&=-\sum_{\substack{\col(u)=\col(j)=\col(i)+1=\col(v)+1\\q_l<\row(u)=\row(v)\leq q_{\col(j)}\\\row(i)=p,\row(j)=q}}\limits e_{u,j}[-1]\psi_{\hat{i},v}[-1].\label{ee10}
\end{align}
Here after, in order to simplify the notation, let us denote the $i$-th term of (the number of the equation) by 
$\text{(the number of the equation)}_i$. By a direct computation, we obtain
\begin{gather*}
\eqref{ee5}_1+\eqref{ee8}+\eqref{ee10}=0,\\
\eqref{ee5}_2+\eqref{ee7}+\eqref{ee9}=0,\\
\eqref{ee5}_3+\eqref{ee6}=0.
\end{gather*}
Then, adding \eqref{ee5}-\eqref{ee10}, we obtain $d_0(W^{(2)}_{i,j})=0$.
\end{proof}
\begin{Remark}
We have already considered the case when $l=2,q_1=m,q_2=n$ in \cite{U5}. In \cite{U5},
we use the different notations about $\col(i)$ and $\row(i)$ as follows;
\begin{gather*}
\col(i)=\begin{cases}
1&\text{ if }i\leq m,\\
2&\text{ if }i>m,
\end{cases}\qquad\row(i)=\begin{cases}
i&\text{ if }i\leq m,\\
i-n&\text{ if }i>m.
\end{cases}
\end{gather*}
In \cite{U5}, we give the strong generators of $\mathcal{W}^k(\mathfrak{gl}(m+n),f)$ as follows;
\begin{equation*}
\{W^{(1)}_{i,j}\mid i\leq m-n,1\leq j\leq m\text{ or }i,j>m-n\},\\
\{W^{(2)}_{i,j}\mid i>m-n\}.
\end{equation*}
The elements $W^{(1)}_{i,j}$ and $W^{(2)}_{i,j}$ in Theorem~\ref{Generators} are corresponding to the elements $W^{(1)}_{m-i+1,m-j+1}$ and $W^{(2)}_{m-i+1,m-j+1}$ in \cite{U5}.
\end{Remark}
Let us set the centralizer
\begin{equation*}
\mathfrak{g}_0^f=\{y\in\mathfrak{g}_0|[f,y]=0\}.
\end{equation*}
By Theorem~2.4 in \cite{KRW}, there exists an embedding from the$\mathfrak{g}_0^f$ to $\mathcal{W}^k(\mathfrak{gl}(N),f)$. We note that $W^{(1)}_{p,q}$ is corresponding to $\sum_{\substack{1\leq i,j\leq N,\\\row(i)=p,\row(j)=q,\\\col(i)=\col(j)}}e_{i,j}$ for $q_l<p=q\leq q_1\text{ or }1\leq p,q\leq q_l$.

\section{The universal enveloping algebra of $\mathcal{W}^k(\mathfrak{gl}(N),f)$}
Let us recall the definition of a universal enveloping algebra of a vertex algebra in the sense of \cite{FZ} and \cite{MNT}.
For any vertex algebra $V$, let $L(V)$ be the Borchards Lie algebra, that is,
\begin{align}
 L(V)=V{\otimes}\mathbb{C}[t,t^{-1}]/\text{Im}(\partial\otimes\id +\id\otimes\frac{d}{d t})\label{844},
\end{align}
where the commutation relation is given by
\begin{align*}
 [ut^a,vt^b]=\sum_{r\geq 0}\begin{pmatrix} a\\r\end{pmatrix}(u_{(r)}v)t^{a+b-r}
\end{align*}
for all $u,v\in V$ and $a,b\in \mathbb{Z}$. Now, we define the universal enveloping algebra of $V$.
\begin{Definition}[Section~6 in \cite{MNT}]\label{Defi}
We set $\mathcal{U}(V)$ as the quotient algebra of the standard degreewise completion of the universal enveloping algebra of $L(V)$ by the completion of the two-sided ideal generated by
\begin{gather}
(u_{(a)}v)t^b-\sum_{i\geq 0}
\begin{pmatrix}
 a\\i
\end{pmatrix}
(-1)^i(ut^{a-i}vt^{b+i}-(-1)^avt^{a+b-i}ut^{i}),\label{241}\\
|0\rangle t^{-1}-1.
\end{gather}
We call $\mathcal{U}(V)$ the universal enveloping algebra of $V$.
\end{Definition}
In the last of this section, we will consider the universal enveloping algebra of 
$\mathcal{W}^k(\mathfrak{gl}(N),f)$.
The projection map from $\mathfrak{gl}(N)$ to $\bigotimes_{i=1}^l\limits \mathfrak{gl}(q_i)$ induces the injective homomorphism called the Miura map (see \cite{KW1})
\begin{align*}
\mu\colon \mathcal{W}^k(\mathfrak{gl}(N),f)\to V^{\kappa}(\bigotimes_{i=1}^l\limits \mathfrak{gl}(q_i)).
\end{align*}

Let $e^{(r)}_{i,j}t^s\in\bigotimes_{1\leq i\leq l}\limits U(\widehat{\mathfrak{gl}}(q_i))$ be $1^{\otimes r-1}\otimes e^{(r)}_{i,j}t^s\otimes 1^{\otimes l-r}$. Let us set the degree of $\bigotimes_{1\leq i\leq l}\limits U(\widehat{\mathfrak{gl}}(q_i))$ by
\begin{equation*}
\text{deg}(e^{(r)}_{i,j}t^s)=s.
\end{equation*}
Induced by the Miura map $\mu$, we obtain
\begin{equation*}
\widetilde{\mu}\colon \mathcal{U}(\mathcal{W}^{k}(\mathfrak{gl}(N),f))\to {\widehat{\bigoplus}}_{1\leq i\leq l}U(\widehat{\mathfrak{gl}}(q_i)),
\end{equation*}
where ${\widehat{\bigotimes}}_{1\leq i\leq l}U(\widehat{\mathfrak{gl}}(q_i))$ is the standard degreewise completion of $\bigotimes_{1\leq i\leq l}U(\widehat{\mathfrak{gl}}(q_i))$. 

By the definition of $W^{(1)}_{i,j}$ and $W^{(2)}_{i,j}$, we have
\begin{align}
\widetilde{\mu}(W^{(1)}_{i,j}t^s)&=\sum_{1\leq r\leq l}\limits e^{(r)}_{i,j}t^s,\label{W1}\\
\widetilde{\mu}(W^{(2)}_{i,j}t^s)&=\sum_{r=1}^ns\gamma_re^{(r)}_{i,j}t^{s-1}+\sum_{s\in\mathbb{Z}}\limits\sum_{r_1<r_2}\sum_{1\leq u\leq q_l}\limits e^{(r_1)}_{u,j}t^{-s}e^{(r_2)}_{i,u}t^s\nonumber\\
&\quad-\sum_{s\in\mathbb{Z}}\limits\sum_{r_1<r_2}\sum_{q_l<u\leq q_{r_2}}\limits e^{(r_1)}_{i,u}t^{-s}e^{(r_2)}_{u,j}t^s\nonumber\\
&\quad-\sum_{s\geq0}\limits\sum_{r\geq0}\limits\sum_{1\leq u \leq q_r}\limits (e^{(r)}_{u,j}t^{-s-1}e^{(r)}_{i,u}t^{s+1}+e^{(r)}_{i,u}t^{-s}e^{(r)}_{u,j}t^{s}).\label{W2}
\end{align}
Since the Miura map is injective (see \cite{F}, \cite{A3}), $\widetilde{\mu}$ is injective.
\section{Guay's affine Yangians and non-rectangular $W$-algebras}

Similarly to $Y^a_{\hbar,\ve-(a-b)\hbar}(\widehat{\mathfrak{sl}}(n))\widetilde{\otimes}Y^{b}_{\hbar,\ve}(\widehat{\mathfrak{sl}}(n))$, we define
\begin{equation*}
Y^{q_g}_{\hbar,\ve-(q_g-q_l)\hbar}(\widehat{\mathfrak{sl}}(n))\widetilde{\otimes}Y^{q_{g+1}}_{\hbar,\ve-(q_{g+1}-q_l)\hbar}(\widehat{\mathfrak{sl}}(n))\widetilde{\otimes}\cdots\widetilde{\otimes}Y^{q_{l-1}}_{\hbar,\ve-(q_{l-1}-q_l)\hbar}(\widehat{\mathfrak{sl}}(n))\widetilde{\otimes}Y^{q_{l}}_{\hbar,\ve}(\widehat{\mathfrak{sl}}(n)).
\end{equation*}
We denote this algebra by $\widetilde{\otimes}_{i=g}^lY^{q_i}_{\hbar,\ve-(q_i-q_l)\hbar}(\widehat{\mathfrak{sl}}(n))$.
By Theorem~\ref{Coproduct}, $\Delta^{q_{g-1},q_{g}}$ naturally induces the homomorphism
\begin{equation*}
\widetilde{\otimes}_{i=g+1}^lY^{q_i}_{\hbar,\ve-(q_i-q_l)\hbar}(\widehat{\mathfrak{sl}}(n))\to\widetilde{\otimes}_{i=g}^lY^{q_i}_{\hbar,\ve-(q_i-q_l)\hbar}(\widehat{\mathfrak{sl}}(n)).
\end{equation*}
We denote this homomorphism by $\Delta^{q_{g-1},q_{g}}\otimes\id^{\otimes l-g}$.
By the definition of $\Delta^{q_{g-1},q_{g}}\otimes\id^{\otimes l-g}$, we have a homomorphism
\begin{equation*}
\Delta^l\colon Y_{\hbar,\ve}(\widehat{\mathfrak{sl}}(n))\to \widetilde{\otimes}_{i=1}^lY^{q_i}_{\hbar,\ve-(q_i-q_l)\hbar}(\widehat{\mathfrak{sl}}(n))
\end{equation*}
determined by
\begin{equation*}
\Delta^l=(\Delta^{q_1,q_2}\otimes\id^{l-2})\circ(\Delta^{q_2,q_3}\otimes\id^{l-3})\circ\cdots\circ(\Delta^{q_{l-2},q_{l-1}}\otimes\id)\circ\Delta^{q_{l-1},q_l}\circ\Psi_1.
\end{equation*}
By Theorem~\eqref{evaluation}, we have a homomorphism
\begin{equation*}
\widetilde{\ev}_{\hbar,\ve-(q_i-q_l)\hbar}\colon Y^{q_i}_{\hbar,\ve-(q_i-q_l)\hbar}(\widehat{\mathfrak{sl}}(n))\to U(\widehat{\mathfrak{gl}}(q_i))_{\text{comp}}
\end{equation*}
under the assumption that
\begin{equation*}
c_{q_i}=-\dfrac{\ve-(q_i-q_l)\hbar}{\hbar}.
\end{equation*}
By the definition of $\widetilde{\otimes}_{i=g}^lY^{q_i}_{\hbar,\ve-(q_i-q_l)\hbar}(\widehat{\mathfrak{sl}}(n))$, we find that
\begin{align*}
\widetilde{\ev}_{\hbar,\ve-(q_1-q_l)\hbar}^{-\hbar\sum_{v=2}^l\alpha_v}\otimes\widetilde{\ev}_{\hbar,\ve-(q_2-q_l)\hbar}^{-\hbar\sum_{v=3}^l\alpha_v}\otimes\cdots\otimes\widetilde{\ev}_{\hbar,\ve-(q_{l-1}-q_l)\hbar}^{-\hbar\alpha_l}\circ\widetilde{\ev}_{\hbar,\ve}^{0}.
\end{align*}
induces the homomorphism
\begin{equation*}
\ev^l\colon \widetilde{\otimes}_{i=1}^lY^{q_i}_{\hbar,\ve-(q_i-q_l)\hbar}(\widehat{\mathfrak{sl}}(n))\to U(\widehat{\mathfrak{gl}}(q_1))\widehat{\otimes}\cdots\widehat{\otimes} U(\widehat{\mathfrak{gl}}(q_l)).
\end{equation*}
Here after, we sometimes denote $q_l$ by $n$.
\begin{Theorem}
Suppose that $n\geq 3$ and $-\dfrac{\ve}{\hbar}=k+N$.
There exists an algebra homomorphism
\begin{equation*}
\Phi\colon Y_{\hbar,\ve}(\widehat{\mathfrak{sl}}(n))\to \mathcal{U}(\mathfrak{gl}(N),f))
\end{equation*}
determined by
\begin{gather*}
\Phi(H_{i,0})=\begin{cases}
W^{(1)}_{n,n}-W^{(1)}_{n,n}+\displaystyle\sum_{v=1}^{l}\limits\alpha_v&\text{ if }i=0,\\
W^{(1)}_{i,i}-W^{(1)}_{i+1,i+1i}&\text{ if }i\neq 0,
\end{cases}\\
\Phi(X^+_{i,0})=\begin{cases}
W^{(1)}_{n,1}t&\text{ if }i=0,\\
W^{(1)}_{i,i+1}&\text{ if }i\neq0,
\end{cases}
\quad \Phi(X^-_{i,0})=\begin{cases}
W^{(1)}_{1,n}t^{-1}&\text{ if }i=0,\\
W^{(1)}_{i+1,i}&\text{ if }i\neq0,
\end{cases}
\end{gather*}
\begin{align*}
\Phi(H_{i,1})&=\begin{cases}
-\hbar(W^{(2)}_{n,n}t-W^{(2)}_{1,1}t)-\hbar(\displaystyle\sum_{v=1}^{l-1}\alpha_v)W^{(1)}_{n,n}\\
\quad-\hbar(\displaystyle\sum_{v=1}^{l-1}\alpha_v)(\displaystyle\sum_{v=1}^{l}\alpha_v)+\hbar(\displaystyle\sum_{v=1}^{l}\alpha_v)\Phi(H_{0,0}) -\hbar W^{(1)}_{n,n} (W^{(1)}_{1,1}-(\displaystyle\sum_{v=1}^{l}\alpha_v))\\
\quad-\hbar\displaystyle\sum_{w\leq m-n}\limits W^{(1)}_{w,w}+\hbar\displaystyle\sum_{s \geq 0} \limits\displaystyle\sum_{u=1}^{n}\limits W^{(1)}_{n,u}t^{-s} W^{(1)}_{u,n}t^s-\hbar\displaystyle\sum_{s \geq 0}\displaystyle\sum_{u=1}^{n}\limits W^{(1)}_{1,u}t^{-s-1} W^{(1)}_{u,1}t^{s+1}\\
\qquad\qquad\qquad\qquad\qquad\qquad\qquad\qquad\qquad\qquad\qquad\qquad\qquad\qquad \text{ if }i=0,\\
-\hbar(W^{(2)}_{i,i}t-W^{(2)}_{i+1,i+1}t)-\dfrac{i}{2}\hbar\Phi(H_{i,0})+\hbar W^{(1)}_{i,i}W^{(1)}_{i+1,i+1}\\
\quad+\hbar\displaystyle\sum_{s \geq 0}  \limits\displaystyle\sum_{u=1}^{i}\limits W^{(1)}_{i,u}t^{-s}W^{(1)}_{u,i}t^s+\hbar\displaystyle\sum_{s \geq 0} \limits\displaystyle\sum_{u=i+1}^{n}\limits W^{(1)}_{i,u}t^{-s-1} W^{(1)}_{u,i}t^{s+1}\\
\quad-\hbar\displaystyle\sum_{s \geq 0}\limits\displaystyle\sum_{u=1}^{i}\limits W^{(1)}_{i+1,u}t^{-s} W^{(1)}_{u,i+1}t^s-\hbar\displaystyle\sum_{s \geq 0}\limits\displaystyle\sum_{u=i+1}^{n} \limits W^{(1)}_{i+1,u}t^{-s-1} W^{(1)}_{u,i+1}t^{s+1}\\
\qquad\qquad\qquad\qquad\qquad\qquad\qquad\qquad\qquad\qquad\qquad\qquad\qquad\qquad \text{ if }i\neq0,
\end{cases}
\end{align*}
\begin{align*}
\Phi(X^+_{i,1})&=\begin{cases}
-\hbar W^{(2)}_{n,1}t^2+\hbar
(\displaystyle\sum_{v=1}^{l}\alpha_v)\Phi(X_{0,0}^{+})+\hbar\displaystyle\sum_{s \geq 0} \limits\displaystyle\sum_{u=1}^{n}\limits W^{(1)}_{n,u}t^{-s} W^{(1)}_{u,1}t^{s+1}\\
\qquad\qquad\qquad\qquad\qquad\qquad\qquad\qquad\qquad\qquad\qquad\qquad\qquad\qquad \text{ if $i = 0$},\\
-\hbar W^{(2)}_{i,i+1}t-\dfrac{i}{2}\hbar\Phi(X_{i,0}^{+})\\
\quad+\hbar\displaystyle\sum_{s \geq 0}\limits\displaystyle\sum_{u=1}^i\limits W^{(1)}_{i,u}t^{-s} W^{(1)}_{u,i+1}t^s+\hbar\displaystyle\sum_{s \geq 0}\limits\displaystyle\sum_{u=i+1}^{n}\limits W^{(1)}_{i,u}t^{-s-1} W^{(1)}_{u,i+1}t^{s+1}\\
\qquad\qquad\qquad\qquad\qquad\qquad\qquad\qquad\qquad\qquad\qquad\qquad\qquad\qquad \text{ if $i \neq 0$},
\end{cases}
\end{align*}
\begin{align*}
\Phi(X^-_{i,1})&=\begin{cases}
-\hbar W^{(2)}_{1,n}-\hbar(\displaystyle\sum_{v=1}^{l-1}\alpha_v)W^{(1)}_{1,n}t^{-1}-\hbar(\displaystyle\sum_{r=1}^l\limits\alpha_r)\Phi(X_{0,0}^{-})+\hbar\displaystyle\sum_{s \geq 0} \limits\displaystyle\sum_{u=1}^{n}\limits W^{(1)}_{1,u}t^{-s-1} W^{(1)}_{u,n}t^s\\
\qquad\qquad\qquad\qquad\qquad\qquad\qquad\qquad\qquad\qquad\qquad\qquad\qquad\qquad \text{ if $i = 0$},\\
-\hbar W^{(2)}_{i+1,i}t-\dfrac{i}{2}\hbar\Phi(X_{i,0}^{-})\\
\quad+\hbar\displaystyle\sum_{s \geq 0}\limits\displaystyle\sum_{u=1}^i\limits W^{(1)}_{i+1,u}t^{-s} W^{(1)}_{u,i}t^s+\hbar\displaystyle\sum_{s \geq 0}\limits\displaystyle\sum_{u=i+1}^{n}\limits W^{(1)}_{i+1,u}t^{-s-1} W^{(1)}_{u,i}t^{s+1} \\
\qquad\qquad\qquad\qquad\qquad\qquad\qquad\qquad\qquad\qquad\qquad\qquad\qquad\qquad\text{ if $i \neq 0$}.
\end{cases}
\end{align*}
\end{Theorem}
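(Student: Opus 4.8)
The plan is to realize $\Phi$ as the factorization through the Miura map $\widetilde{\mu}$ of the composite homomorphism $\ev^l\circ\Delta^l\colon Y_{\hbar,\ve}(\widehat{\mathfrak{sl}}(n))\to U(\widehat{\mathfrak{gl}}(q_1))\widehat{\otimes}\cdots\widehat{\otimes}U(\widehat{\mathfrak{gl}}(q_l))$ built above. Since $\Delta^l$ and $\ev^l$ are algebra homomorphisms by Theorem~\ref{Coproduct} and Theorem~\ref{evaluation}, so is $\ev^l\circ\Delta^l$. The strategy is to show that for each generator $g$ of $Y_{\hbar,\ve}(\widehat{\mathfrak{sl}}(n))$ one has $\ev^l\circ\Delta^l(g)=\widetilde{\mu}(P_g)$, where $P_g\in\mathcal{U}(\mathcal{W}^k(\mathfrak{gl}(N),f))$ is the element written on the right-hand side of the asserted formula for $\Phi(g)$; such $P_g$ genuinely lies in $\mathcal{U}(\mathcal{W}^k(\mathfrak{gl}(N),f))$ because each $W^{(1)}_{p,q}$ and $W^{(2)}_{p,q}$ belongs to the $W$-algebra by Theorem~\ref{Generators}. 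Once this is done the image of $\ev^l\circ\Delta^l$ lies in the image of $\widetilde{\mu}$, and since $\widetilde{\mu}$ is injective we may set $\Phi=\widetilde{\mu}^{-1}\circ(\ev^l\circ\Delta^l)$; this is automatically an algebra homomorphism with $\Phi(g)=P_g$, which is exactly the assertion. The advantage of this route is that it bypasses any direct check that the proposed formulas respect the relations \eqref{Eq2.1}--\eqref{Eq2.10}.

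The verification $\ev^l\circ\Delta^l(g)=\widetilde{\mu}(P_g)$ is carried out degree by degree. For the degree-zero generators $H_{i,0}$ and $X^\pm_{i,0}$ the coproduct $\Delta^{a,b}$ is primitive on level zero, so $\Delta^l(H_{i,0})=\sum_{r=1}^l 1^{\otimes r-1}\otimes H_{i,0}\otimes 1^{\otimes l-r}$ and likewise for $X^\pm_{i,0}$; applying $\ev^l$ and comparing with \eqref{W1} gives the claim immediately, the central values $c_{q_r}=-\tfrac{\ve-(q_r-q_l)\hbar}{\hbar}$ producing the scalar $\sum_v\alpha_v$ in the case $i=0$. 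This step is routine.

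The substantial part is the degree-one generators $H_{i,1}$ and $X^\pm_{i,1}$. Here I would compute $\Delta^l$ by iterating the maps $\Delta^{q_{j-1},q_j}$ a total of $l-1$ times, tracking the three families of correction terms $A_i,B_i,F_i$ and their $\pm$ analogues from Theorem~\ref{Coproduct}. After applying $\widetilde{\ev}^{a_r}$ with the prescribed evaluation shifts $a_r=-\hbar\sum_{y>r}\alpha_y$ in the $r$-th factor, the diagonal contributions $A_i$ and $B_i$ reproduce the same-factor part of \eqref{W2}, namely the linear $\gamma$-terms together with the single-factor normally ordered quadratic sums, while the off-diagonal coupling terms $F_i$ are exactly what produces the cross-factor quadratic sums $\sum_{r_1<r_2}\sum_{u}e^{(r_1)}_{u,j}t^{-s}e^{(r_2)}_{i,u}t^{s}$ of \eqref{W2}. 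Matching these two computations term by term---keeping track of the evaluation points $a_r$, the parameter shifts $\ve_r=\ve-(q_r-q_l)\hbar$, and the bookkeeping of which indices survive the projection $\delta(i,j\leq b)$ in $\Delta^{a,b}(e_{i,j}t^s)$---is the technical heart of the argument and the main obstacle, the chief difficulty being the control of the infinite completed sums.

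Finally, the hypothesis $-\ve/\hbar=k+N$ is precisely what aligns the central charges $c_{q_r}$ across the tensor factors so that the scalar terms produced by $\ev^l\circ\Delta^l$ match those coming from $\gamma_{\col}=\sum_{u>\col}\alpha_u$ on the $\widetilde{\mu}$ side; with this choice the two sides agree on all generators. Invoking the injectivity of $\widetilde{\mu}$ then closes the argument, and $\Phi$ is an algebra homomorphism as it is the composite of the homomorphism $\ev^l\circ\Delta^l$ with the inverse of the injective homomorphism $\widetilde{\mu}$ on its image.
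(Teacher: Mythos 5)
Your proposal is correct and follows essentially the same route as the paper: the paper also defines $\Phi$ implicitly via the injectivity of $\widetilde{\mu}$, reducing everything to the identity $\ev^l\circ\Delta^l(A_{i,r})=\widetilde{\mu}\circ\Phi(A_{i,r})$ on the generators $H_{i,r},X^\pm_{i,r}$ ($r=0,1$), so that the homomorphism property is inherited from $\ev^l\circ\Delta^l$ rather than checked against the relations \eqref{Eq2.1}--\eqref{Eq2.10}. Your term-matching plan (diagonal contributions $A_i,B_i$ and cross terms $F_i$ against \eqref{W1}--\eqref{W2}, with the hypothesis $-\ve/\hbar=k+N$ aligning the central charges) is exactly the computation the paper carries out explicitly for $X^\pm_{0,1}$, the remaining cases being analogous.
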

\begin{proof}
Since $\widetilde{\mu}$ is injective, it is enough to show the following relation;
\begin{gather*}
\ev^l\circ\Delta^l(A_{i,r})=\widetilde{\mu}\circ\Phi(A_{i,r})\text{ for all }r=0,1\text{ and }A=H,X^\pm.
\end{gather*}
It is enough to show that 
\begin{gather*}
\ev^l\circ\Delta^l(A_{i,r})=\widetilde{\mu}\circ\Phi(A_{i,r})\text{ for all }r=0,1\text{ and }A=H,X^\pm.
\end{gather*}
We only show the case when $i=0,r=1,A=X^\pm$. Other cases are proven in a similar way.
First, we show that
\begin{gather}
\ev^l\circ\Delta^l(X^+_{0,1})=\widetilde{\mu}\circ\Phi(X^+_{0,1}).\label{gather1}
\end{gather}
By \eqref{W1} and \eqref{W2}, the right hand side of \eqref{gather1} is equal to
\begin{align}
&-\hbar\sum_{w\in\mathbb{Z}}\limits\sum_{r_1<r_2}\limits\sum_{1\leq u\leq n}\limits e^{(r_1)}_{u,1}t^{-w+1}e^{(r_2)}_{n,u}t^w+\hbar\sum_{w\in\mathbb{Z}}\limits\sum_{r_1<r_2}\limits\sum_{n<u\leq q_{r_2}}\limits e^{(r_1)}_{n,u}t^we^{(r_2)}_{u,1}t^{-w+1}\nonumber\\
&\quad+\hbar\sum_{s\geq0}\limits\sum_{1\leq r\leq l}\limits\sum_{n<u\leq q_r}\limits (e^{(r)}_{u,1}t^{-s-1}e^{(r)}_{n,u}t^{s+2}+e^{(r)}_{n,u}t^{-s+1}e^{(r)}_{u,1}t^s)\nonumber\\
&\quad-2\hbar\sum_{a=1}^l\limits(\sum_{r=a+1}^l\alpha_r)e^{(a)}_{n,1}t
+\hbar\sum_{a=1}^l\limits(\sum_{r=1}^l\limits\alpha_r)e^{(a)}_{n,1}t+\hbar\displaystyle\sum_{s \geq 0} \limits\sum_{r=1}^l\limits\displaystyle\sum_{u=1}^{n}\limits e^{(r)}_{n,u}t^{-s} e^{(r)}_{u,1}t^{s+1}\nonumber\\
&\quad+\hbar\displaystyle\sum_{s \geq 0} \limits\sum_{r_1<r_2}\limits\displaystyle\sum_{u=1}^{n}\limits e^{(r_1)}_{n,u}t^{-s} e^{(r_2)}_{u,1}t^{s+1}+\hbar\displaystyle\sum_{s \geq 0} \limits\sum_{r_1<r_2}\limits\displaystyle\sum_{u=1}^{n}\limits e^{(r_1)}_{u,1}t^{s+1}e^{(r_2)}_{n,u}t^{-s},\label{align1}
\end{align}
where the first four terms are derived from $-\hbar W^{(1)}_{n,1}t^2$.
By Theorem~\ref{evaluation} and \ref{Coproduct}, the left hand side of \eqref{gather1} is equal to
\begin{align}
&\hbar\sum_{r=1}^l\limits \alpha_r e^{(r)}_{n,1}t+\hbar\sum_{r=1}^l\limits\sum_{u=1}^n\limits e^{(r)}_{n,u}t^{-s}e^{(r)}_{u,1}t^{s+1}-\hbar\sum_{a=1}^l\limits(\sum_{r=a+1}^l\alpha_r)e^{(a)}_{n,1}t\nonumber\\
&\quad+\hbar\sum_{a=1}^l\limits(\sum_{r=1}^{a-1}\alpha_r)e^{(a)}_{n,1}t+\hbar\displaystyle\sum_{s \geq 0} \limits\displaystyle\sum_{u=1}^{n}\limits\sum_{r_1<r_2}\limits (-e^{(r_1)}_{u,n}t^{-s}e^{(r_2)}_{1,u}t^{s+1}+e^{(r_1)}_{1,u}t^{-s}e^{(r_2)}_{u,n}t^{s+1})\nonumber\\
&\quad+\hbar\sum_{w\in\mathbb{Z}}\limits\sum_{r_1<r_2}\limits\sum_{u=n+1}^{q_{r_2}} e^{(r_1)}_{n,u}t^{-w}e^{(r_2)}_{u,1}t^{w+1}\nonumber\\
&\quad+\hbar\sum_{r=1}^l\sum_{s\geq0}\limits\sum_{u=n+1}^a\limits (e^{(r)}_{u,1}t^{-s-1}e^{(r)}_{n,u}t^{s+2}+e^{(r)}_{n,u}t^{1-s}e^{(r)}_{u,1}t^{s}),\label{align3}
\end{align}
where $\eqref{align3}_1$, $\eqref{align3}_2$, $\eqref{align3}_3$ are deduced from the evaluation map, $\eqref{align3}_4$, $\eqref{align3}_5$ are deduced from $A_i$, and other terms are deduced from $B_i$ and $F_i$.
Since the relations
\begin{gather*}
\eqref{align3}_1+\eqref{align3}_3+\eqref{align3}_4=\eqref{align1}_4+\eqref{align1}_5,\\
\eqref{align3}_2=\eqref{align1}_6,\\
\eqref{align3}_5=\eqref{align1}_1+\eqref{align1}_6+\eqref{align1}_7,\\
\eqref{align3}_6=\eqref{align1}_2,\\
\eqref{align3}_7=\eqref{align1}_3
\end{gather*}
hold by a direct computation, we obtain \eqref{gather1}.

Next, we show that
\begin{gather}
\ev^l\circ\Delta^l(X^-_{0,1})=\widetilde{\mu}\circ\Phi(X^-_{0,1}).\label{gather2}
\end{gather}
By \eqref{W1} and \eqref{W2}, the right hand side of \eqref{gather2} is equal to
\begin{align}
&-\hbar\sum_{w\in\mathbb{Z}}\limits\sum_{r_1<r_2}\limits\sum_{1\leq u\leq n}\limits e^{(r_1)}_{u,n}t^{-w-1}e^{(r_2)}_{1,u}t^w\nonumber\\
&\quad+\hbar\sum_{w\in\mathbb{Z}}\limits\sum_{r_1<r_2}\limits\sum_{n<u\leq q_{r_2}}\limits e^{(r_1)}_{1,u}t^we^{(r_2)}_{u,n}t^{-w-1}\nonumber\\
&\quad+\hbar\sum_{s\geq0}\limits\sum_{1\leq r\leq l}\limits\sum_{n<u\leq q_r}\limits (e^{(r)}_{u,n}t^{-s-1}e^{(r)}_{1,u}t^s+e^{(r)}_{1,u}t^{-s-1}e^{(r)}_{u,n}t^s)-\hbar\sum_{a=1}^l\limits(\sum_{r=1}^{l-1}\limits\alpha_r)e^{(a)}_{1,n}t^{-1}\nonumber\\
&\quad+\hbar\sum_{a=1}^l\limits(\sum_{r=1}^l\limits\alpha_r)e^{(a)}_{1,n}t^{-1}+\hbar\displaystyle\sum_{s \geq 0} \limits\sum_{r=1}^l\limits\displaystyle\sum_{u=1}^{n}\limits e^{(r)}_{1,u}t^{-s-1} e^{(r)}_{u,n}t^s\nonumber\\
&\quad+\hbar\displaystyle\sum_{s \geq 0} \limits\sum_{r_1<r_2}\limits\displaystyle\sum_{u=1}^{n}\limits e^{(r_1)}_{1,u}t^{-s-1} e^{(r_2)}_{u,n}t^s+\hbar\displaystyle\sum_{s \geq 0} \limits\sum_{r_1<r_2}\limits\displaystyle\sum_{u=1}^{n}\limits e^{(r_1)}_{u,n}t^se^{(r_2)}_{1,u}t^{-s-1},\label{align4}
\end{align}
where the first three terms are derived from $-\hbar W^{(2)}_{1,n}$.
By Theorem~\ref{evaluation} and \ref{Coproduct}, the left hand side of \eqref{gather2} is equal to
\begin{align}
&\hbar\sum_{r=1}^l\limits \alpha_r e^{(r)}_{1,n}t^{-1}+\hbar\sum_{r=1}^l\limits\sum_{u=1}^n\limits e^{(r)}_{1,u}t^{-s-1}e^{(r)}_{u,n}t^s-\hbar\sum_{a=1}^l\limits(\sum_{r=a+1}^l\alpha_r)e^{(a)}_{1,n}t^{-1}\nonumber\\
&\quad+\hbar\sum_{a=1}^l\limits(\sum_{r=a+1}^l\alpha_r)e^{(a)}_{1,n}t^{-1}+\hbar\displaystyle\sum_{s \geq 0} \limits\displaystyle\sum_{u=1}^{n}\limits\sum_{r_1<r_2}\limits (-e^{(r_1)}_{u,n}t^{-s-1}e^{(r_2)}_{1,u}t^{s}+e^{(r_1)}_{1,u}t^{-s-1}e^{(r_2)}_{u,n}t^s)\nonumber\\
&\quad+\hbar\sum_{s\geq0}\limits\sum_{r_1<r_2}\limits\sum_{u=n+1}^{q_{r_2}} e^{(r_1)}_{1,u}t^{-w-1}e^{(r_2)}_{u,n}t^{w}\nonumber\\
&\quad+\hbar\sum_{r=1}^l\sum_{s\geq0}\limits\sum_{u=n+1}^a\limits (e^{(r)}_{u,n}t^{-s-1}e^{(r)}_{1,u}t^{s}+e^{(r)}_{1,u}t^{-s-1}e_{u,n}t^{s})\nonumber\\
&\quad-\hbar\sum_{a=1}^l(\alpha_a-\alpha_l) e^{(a)}_{1,n}t^{-1},\label{align5}
\end{align}
where $\eqref{align4}_1$, $\eqref{align4}_2$, $\eqref{align4}_3$ are deduced from the evaluation map, $\eqref{align4}_4$, $\eqref{align4}_5$ are deduced from $A_i$, and other terms are deduced from $B_i$ and $F_i$.
Since the relations
\begin{gather*}
\eqref{align5}_1+\eqref{align5}_3+\eqref{align5}_4+\eqref{align5}_8=\eqref{align4}_4+\eqref{align4}_5,\\
\eqref{align5}_2=\eqref{align4}_6,\\
\eqref{align5}_5=\eqref{align4}_1+\eqref{align4}_8+\eqref{align4}_7,\\
\eqref{align5}_6=\eqref{align4}_2,\\
\eqref{align5}_7=\eqref{align4}_3
\end{gather*}
hold by a direct computation, we obtain \eqref{gather2}.
\end{proof}
\begin{Remark}\label{Rmk1}
In Theorem~5.1 in \cite{U5}, we have constructed an algebra homomorphism
\begin{equation*}
\widetilde{\Phi}\colon Y_{\hbar,\ve}(\widehat{\mathfrak{sl}}(n))\to\mathcal{U}(\mathcal{W}^k(\mathfrak{gl}(m+n),f))
\end{equation*}
in the case when $q_1=m,q_2=n$. By the definition of $\Phi$ and $\widetilde{\Phi}$, we find that $\widetilde{\Phi}$ is different from $\Phi$. However, in the same computation to the one of the proof of Theorem 5.1 in \cite{U5}, we can prove that $\Phi$ is compatible with the defining relations \eqref{Eq2.1}-\eqref{Eq2.10}. 
\end{Remark}
\appendix
\section{The proof of the compatibility with \eqref{Eq2.11} and \eqref{Eq2.12}}
Take $n<u\leq b$. By the definition of $\Delta^{a,b}$, we have
\begin{align*}
&\quad[\Delta^{a,b}(H_{i,1}),\Delta^{a,b}(e_{u,j}t^x)]\\
&=[H_{i,1},e_{u,j}t^x]\otimes 1+1\otimes[H_{i,1},e_{u,j}t^x]-[F_i,\square(e_{u,j}t^x)]+[A_i,\square(e_{u,j}t^x)]+[B_i,\square(e_{u,j}t^x)],
\end{align*}
where $\square(y)=y\otimes1+1\otimes y$. Thus, it is enough to show that
\begin{equation*}
(\Delta^{a,b}-\square)([H_{i,1}e_{u,j}t^x])=-[F_i,\square(e_{u,j}t^x)]+[A_i,\square(e_{u,j}t^x)]+[B_i,\square(e_{u,j}t^x)].
\end{equation*}

First, let us compute $[F_i,\square(e_{u,j}t^x)]$. By a direct computation, we obtain
\begin{align}
&\quad[\hbar\sum_{w\in\mathbb{Z}}\limits\sum_{v=n+1}^b e_{v,i}t^w\otimes e_{i,v}t^{-w},\square(e_{u,j}t^x)]\nonumber\\
&=\hbar\sum_{w\in\mathbb{Z}}\limits e_{u,i}t^w\otimes e_{i,j}t^{x-w}-\delta_{i,j}\hbar\sum_{w\in\mathbb{Z}}\limits\sum_{v=n+1}^b e_{v,i}t^w\otimes e_{u,v}t^{x-w}\nonumber\\
&\quad-\alpha_2\delta_{i,j}\hbar xe_{u,i}t^x\otimes 1.\label{align100}
\end{align}
Here after, we denote by $(\text{equation number})_{p,q}$ the formula that substitutes $i=p,j=q$ for the right hand side of $(\text{equation number})$.
By the definition of $F_i$, we find that
\begin{align*}
[F_i,\square(e_{u,j}t^x)]&=\eqref{align100}_{i,j}-\eqref{align100}_{i+1,j}.
\end{align*}
Next, let us compute $[A_i,\square(e_{u,j}t^x)]$. By a direct computation, we obtain
\begin{align}
&\quad[-\hbar(e_{i,i}\otimes e_{i+1,i+1}+e_{i+1,i+1}\otimes e_{i,i}),\square(e_{u,j}t^x)]\nonumber\\
&=\hbar\delta_{j,i+1} e_{i,i}\otimes e_{u,j}t^x+\hbar\delta_{i,j}e_{u,j}t^x\otimes e_{i+1,i+1}\nonumber\\
&\quad+\hbar\delta_{i,j}e_{i+1,i+1}\otimes e_{u,j}t^x+\hbar\delta_{j,i+1}e_{u,j}t^x\otimes e_{i,i},\label{align100.5}
\end{align}
\begin{align}
&\quad[\hbar\displaystyle\sum_{s \geq 0}  \limits\displaystyle\sum_{k=1}^{i}\limits (-e_{k,i}t^{-s-1}\otimes e_{i,k}t^{s+1}+e_{i,k}t^{-s}\otimes e_{k,i}t^s),\square(e_{u,j}t^x)]\nonumber\\
&=\hbar\displaystyle\sum_{s \geq 0}\limits\delta(j\leq i) e_{u,i}t^{a-s-1}\otimes e_{i,j}t^{s+1}+\delta_{i,j}\hbar\displaystyle\sum_{s \geq 0}  \limits\displaystyle\sum_{k=1}^{i}\limits e_{k,i}t^{-s-1}\otimes e_{u,k}t^{s+x+1}\nonumber\\
&\quad-\delta_{i,j}\hbar\displaystyle\sum_{s \geq 0}  \limits\displaystyle\sum_{k=1}^{i}\limits e_{u,k}t^{x-s}\otimes e_{k,i}t^s-\hbar\displaystyle\sum_{s \geq 0}  \limits\delta(j\leq i) e_{i,j}t^{-s}\otimes e_{u,i}t^{s+x},\label{align101}
\end{align}
\begin{align}
&\quad[\hbar\displaystyle\sum_{s \geq 0}  \limits\displaystyle\sum_{k=i+1}^{n}\limits (-e_{k,i}t^{-s}\otimes e_{i,k}t^{s}+e_{i,k}t^{-s-1}\otimes e_{k,i}t^{s+1}),\square(e_{u,j}t^x)]\nonumber\\
&=\hbar\displaystyle\sum_{s \geq 0}\limits\delta(j> i) e_{u,i}t^{x-s}\otimes e_{i,j}t^{s}+\delta_{i,j}\hbar\displaystyle\sum_{s \geq 0}  \limits\displaystyle\sum_{k=i+1}^{n}\limits e_{k,i}t^{-s}\otimes e_{u,k}t^{s+x}\nonumber\\
&\quad-\delta_{i,j}\hbar\displaystyle\sum_{s \geq 0}  \limits\displaystyle\sum_{k=i+1}^{n}\limits e_{u,k}t^{x-s-1}\otimes e_{k,i}t^{s+1}-\hbar\displaystyle\sum_{s \geq 0}\limits\delta(j>i) e_{i,j}t^{-s-1}\otimes e_{u,i}t^{s+x+1},\label{align102}
\end{align}
\begin{align}
&\quad-[\hbar\displaystyle\sum_{s \geq 0}  \limits\displaystyle\sum_{k=1}^{i}\limits (-e_{k,i+1}t^{-s-1}\otimes e_{i+1,k}t^{s+1}+e_{i+1,k}t^{-s}\otimes e_{k,i}t^s),\square(e_{u,j}t^x)]\nonumber\\
&=-\hbar\displaystyle\sum_{s \geq 0}\limits\delta(j\leq i) e_{u,i+1}t^{x-s-1}\otimes e_{i+1,j}t^{s+1}-\delta_{i+1,j}\hbar\displaystyle\sum_{s \geq 0}  \limits\displaystyle\sum_{k=1}^{i}\limits e_{k,i+1}t^{-s-1}\otimes e_{u,k}t^{s+x+1}\nonumber\\
&\quad+\delta_{i+1,j}\hbar\displaystyle\sum_{s \geq 0}  \limits\displaystyle\sum_{k=1}^{i}\limits e_{u,k}t^{x-s}\otimes e_{k,i}t^s+\hbar\displaystyle\sum_{s \geq 0}  \limits\delta(j\leq i) e_{i+1,j}t^{-s}\otimes e_{u,i+1}t^{s+x},\label{align104}
\end{align}
\begin{align}
&\quad-[\hbar\displaystyle\sum_{s \geq 0}  \limits\displaystyle\sum_{k=i+1}^{n}\limits (-e_{k,i}t^{-s}\otimes e_{i,k}t^{s}+e_{i,k}t^{-s-1}\otimes e_{k,i}t^{s+1}),\square(e_{u,j}t^x)]\nonumber\\
&=-\hbar\displaystyle\sum_{s \geq 0}\limits\delta(j> i) e_{u,i+1}t^{x-s}\otimes e_{i+1,j}t^{s}-\delta_{i,j}\hbar\displaystyle\sum_{s \geq 0}  \limits\displaystyle\sum_{k=i+1}^{n}\limits e_{k,i+1}t^{-s}\otimes e_{u,k}t^{s+x}\nonumber\\
&\quad+\delta_{i,j}\hbar\displaystyle\sum_{s \geq 0}  \limits\displaystyle\sum_{k=i+1}^{n}\limits e_{u,k}t^{x-s-1}\otimes e_{k,i+1}t^{s+1}+\hbar\displaystyle\sum_{s \geq 0}\limits\delta(j>i) e_{i+1,j}t^{-s-1}\otimes e_{u,i+1}t^{s+x+1}.\label{align105}
\end{align}
By the definition of $A_i$, we find that
\begin{align*}
[A_i,\square(e_{u,j}t^x)]&=\eqref{align100.5}_{i,j}+\eqref{align101}_{i,j}+\eqref{align102}_{i,j}+\eqref{align104}_{i,j}+\eqref{align105}_{i,j}.
\end{align*}
Finally, let us compute $[B_i\otimes1,\square(e_{u,j}t^a)]$. By a direct computation, we obtain
\begin{align}
&\quad[\hbar\sum_{s\geq0}\limits\sum_{v=b+1}^a\limits(e_{v,i}t^{-s-1}e_{i,v}t^{s+1}+e_{i,v}t^{-s}e_{v,i}t^s), e_{u,j}t^x]\nonumber\\
&=-\hbar\delta_{i,j}\sum_{s\geq0}\limits\sum_{v=b+1}^a\limits(e_{v,i}t^{-s-1}e_{u,v}t^{x+s+1}+e_{u,v}t^{x-s}e_{v,i}t^s).\label{align103}
\end{align}
By the definition of $B_i$, we have
\begin{align*}
[B_i\otimes1,\square(e_{u,j}t^x)]&=\eqref{align103}_{i,j}-\eqref{align103}_{i+1,j}
\end{align*}

Here after, we denote by $(\text{equation number})_{p,q,m}$ $m$-th term of the right hand side of the formula that substitutes $i=p,j=q$ for the right hand side of $(\text{equation number})$.

Since we obtain
\begin{align*}
&\quad-\eqref{align100}_{i,j,1}+\eqref{align101}_{i,j,1}+\eqref{align102}_{i,j,1}\\
&=-\hbar\displaystyle\sum_{s \geq 0}\limits\delta(j\leq i) e_{u,i}t^{s+x}\otimes e_{i,j}t^{-s}-\hbar\displaystyle\sum_{s \geq 0}\limits\delta(j> i) e_{u,i}t^{x+s+1}\otimes e_{i,j}t^{-s-1}.
\end{align*}
by a direct computation, we have
\begin{align}
&\quad-\eqref{align100}_{i,j,1}+\eqref{align101}_{i,j,1}+\eqref{align102}_{i,j,1}+\eqref{align101}_{i,j,4}+\eqref{align102}_{i,j,4}\nonumber\\
&=-(\Delta^{a,b}-\square)(\hbar\displaystyle\sum_{s \geq 0}\limits\delta(j\leq i)e_{i,j}t^{-s}e_{u,i}t^{s+x})-(\Delta^{a,b}-\square)(\hbar\displaystyle\sum_{s \geq 0}\limits\delta(j> i) e_{i,k}t^{-s-1}e_{u,i}t^{x+s+1}).\label{align902}
\end{align}
Since we also obtain
\begin{align*}
&\quad\eqref{align100}_{i+1,j,1}+\eqref{align104}_{i,j,1}+\eqref{align105}_{i,j,1}\\
&=\hbar\displaystyle\sum_{s \geq 0}\limits\delta(j\leq i) e_{u,i+1}t^{s+x}\otimes e_{i+1,j}t^{-s}+\hbar\displaystyle\sum_{s \geq 0}\limits\delta(j> i) e_{u,i+1}t^{x+s+1}\otimes e_{i+1,j}t^{-s-1}.
\end{align*}
by a direct computation, we find that
\begin{align}
&\quad\eqref{align100}_{i+1,j,1}+\eqref{align104}_{i,j,1}+\eqref{align105}_{i,j,1}+\eqref{align104}_{i,j,4}+\eqref{align105}_{i,j,4}\nonumber\\
&=(\Delta^{a,b}-\square)(\hbar\displaystyle\sum_{s \geq 0}\limits\delta(j\leq i) e_{i+1,j}t^{-s}e_{u,i+1}t^{s+x})\nonumber\\
&\quad+(\Delta^{a,b}-\square)(\hbar\displaystyle\sum_{s \geq 0}\limits\delta(j> i) e_{i+1,j}t^{-s-1}e_{u,i+1}t^{x+s+1}).\label{align903}
\end{align}
First, let us show the compatibility with \eqref{Eq2.11}.
In the case when $i\neq j,j+1$, we obtain
\begin{align*}
&\quad[\Delta^{a,b}(H_{i,1}),\Delta^{a,b}(e_{u,j}t^a)]\\
&=-\eqref{align100}_{i,j,1}+\eqref{align101}_{i,j,1}+\eqref{align102}_{i,j,1}+\eqref{align101}_{i,j,4}+\eqref{align102}_{i,j,4}\nonumber\\
&\quad+\eqref{align100}_{i+1,j,1}+\eqref{align104}_{i,j,1}+\eqref{align105}_{i,j,1}+\eqref{align104}_{i,j,4}+\eqref{align105}_{i,j,4}.
\end{align*}
Thus, by \eqref{align902} and \eqref{align903}, we find that the compatibility with \eqref{Eq2.11}.

Next, we show the compatibility with \eqref{Eq2.12}. We write down \eqref{Eq2.12} as follows;
\begin{align*}
&\quad[H_{i-1,1},e_{u,i}t^x]+[H_{i,1},e_{u,i}t^x]\nonumber\\
&=\dfrac{\hbar}{2}e_{u,i}t^x+\hbar e_{i-1,i-1}e_{u,i}t^x+\hbar e_{u,i}t^xe_{i+1,i+1}\nonumber\\
&\quad-\hbar\displaystyle\sum_{s \geq 0} \limits e_{i-1,i}t^{-s-1}e_{u,i-1}t^{s+x+1}+\hbar\displaystyle\sum_{s \geq 0}\limits e_{i+1,i}t^{-s}e_{u,i+1}t^{s+w}\nonumber\\
&\quad-\hbar e_{u,i}t^{x}e_{i,i}-\hbar e_{i,i}e_{u,i}t^{x}.
\end{align*}
By the definition of $\Delta^{a,b}$, we obtain
\begin{align*}
&\quad[\Delta^{a,b}(H_{i-1,1}),\Delta^{a,b}(e_{u,i}t^x)]+[\Delta^{a,b}(H_{i,1}),\Delta^{a,b}(e_{u,i}t^x)]\\
&=\eqref{align100}_{i-1,i}-\eqref{align100}_{i,i}+\eqref{align100}_{i,i}-\eqref{align100}_{i+1,i}\\
&\quad+\eqref{align100.5}_{i-1,i}+\eqref{align101}_{i-1,i}+\eqref{align102}_{i-1,i}+\eqref{align104}_{i-1,i}+\eqref{align105}_{i-1,i}\\
&\quad+\eqref{align100.5}_{i,i}+\eqref{align101}_{i,i}+\eqref{align102}_{i,i}+\eqref{align104}_{i,i}+\eqref{align105}_{i,i}\\
&\quad+\eqref{align103}_{i-1,i}-\eqref{align103}_{i,i}+\eqref{align103}_{i,i}-\eqref{align103}_{i+1,i}.
\end{align*}
By a direct computation, we obtain
\begin{gather*}
\eqref{align100}_{i-1,i,2}-\eqref{align100}_{i,i,2}=0,\\
\eqref{align100}_{i-1,i,3}-\eqref{align100}_{i,i,3}=0,\\
\eqref{align103}_{i-1,i}-\eqref{align103}_{i,i}+\eqref{align103}_{i,i}-\eqref{align103}_{i+1,i}=0.
\end{gather*}
By a direct computation, we obtain
\begin{align}
\eqref{align100.5}_{i-1,i,1}+\eqref{align100.5}_{i,i,2}
&=\hbar(\Delta^{a,b}-\square)e_{i-1,i-1}e_{u,i}t^x+\hbar(\Delta^{a,b}-\square)e_{u,i}t^xe_{i+1,i+1}.\label{align906}
\end{align}
By using
\begin{align*}
\eqref{align104}_{i-1,i,2}-\eqref{align101}_{i,i,2}
&=\hbar\displaystyle\sum_{s \geq 0}  \limits e_{i,i}t^{-s-1}\otimes e_{u,i}t^{s+x+1},\nonumber\\
\eqref{align105}_{i-1,i,2}-\eqref{align102}_{i,i,2}
&=-\hbar\displaystyle\sum_{s \geq 0}  \limits e_{i,i}t^{-s}\otimes e_{u,i}t^{s+x},
\end{align*}
we obtain
\begin{equation}
\eqref{align104}_{i-1,i,2}-\eqref{align101}_{i,i,2}+\eqref{align102}_{i-1,i,2}-\eqref{align105}_{i,i,2}=-\hbar e_{i,i}\otimes e_{u,i}t^{x}.\label{align904}
\end{equation}
By using
\begin{align*}
\eqref{align104}_{i-1,i,3}-\eqref{align101}_{i,i,3}
&=-\hbar\displaystyle\sum_{s \geq 0}  \limits e_{u,i}t^{x-s}\otimes e_{i,i}t^s,\nonumber\\
\eqref{align105}_{i-1,i,3}-\eqref{align102}_{i,i,3}
&=\hbar\displaystyle\sum_{s \geq 0}  \limits e_{u,i}t^{x-s-1}\otimes e_{i,i}t^{s+1},
\end{align*}
we obtain
\begin{equation}
\eqref{align104}_{i-1,i,3}-\eqref{align101}_{i,i,3}+\eqref{align105}_{i-1,i,3}-\eqref{align102}_{i,i,3}=-\hbar e_{u,i}t^{x}\otimes e_{i,i}.\label{align905}
\end{equation}
By \eqref{align904} and \eqref{align905}, we have
\begin{align}
&\quad\eqref{align101}_{i-1,i,2}-\eqref{align101}_{i,i,2}+\eqref{align102}_{i-1,i,2}-\eqref{align102}_{i,i,2}\nonumber\\
&\qquad+\eqref{align101}_{i-1,i,3}-\eqref{align101}_{i,i,3}+\eqref{align102}_{i-1,i,3}-\eqref{align102}_{i,i,3}\nonumber\\
&=-(\Delta^{a,b}-\square)(\hbar e_{u,i}t^{x}e_{i,i}).\label{align907}
\end{align}
By a direct computation, we obtain
\begin{align}
\eqref{align902}_{i,i}+\eqref{align903}_{i-1,i}&=-(\Delta^{a,b}-\square)(\hbar e_{i,i}e_{u,i}t^{x}).\label{align908}
\end{align}
By \eqref{align902}, \eqref{align903}, \eqref{align906}, \eqref{align905}, \eqref{align907} and \eqref{align908}, we find the compatibility with \eqref{Eq2.12}.

\section{The proof of compatibility with $[H_{i,1},H_{j,1}]=0$}
By the definition of $A_i$, $F_i$ and $B_i$, we find that
\begin{align}
&\quad[\Delta^{a,b}(H_{i,1}),\Delta^{a,b}(H_{j,1})]\nonumber\\
&=[H_{i,1}+B_i,H_{j,1}+B_j]\otimes 1+[(H_{i,1}+B_i)\otimes 1,A_j]-[(H_{j,1}+B_j)\otimes 1,A_i]\nonumber\\
&\quad+[1\otimes H_{i,1},A_j]-[1\otimes H_{j,1},A_i]+[A_i,A_j]\nonumber\\
&\quad-[(H_{i,1}+B_i)\otimes 1,F_j]+[(H_{j,1}+B_j)\otimes 1,F_i]\nonumber\\
&\quad-[1\otimes H_{i,1},F_j]+[1\otimes H_{j,1},F_i]-[A_i,F_j]+[A_j,F_i]+[F_i,F_j].\label{DD15}
\end{align}
By a direct computation, we obtain 
\begin{equation}
[F_i,F_j]=0.\label{FF15}
\end{equation}

By a similar proof of Theorem 5.2 in \cite{GNW}, we have
\begin{equation}
[H_{i,1}\otimes 1,A_j]-[H_{j,1}\otimes 1,A_i]+[1\otimes H_{i,1},A_j]-[1\otimes H_{j,1},A_i]+[A_i,A_j]=0.\label{HA15}
\end{equation}

By \eqref{DD15}, \eqref{FF15} and \eqref{HA15}, it is enough to show the following two lemmas.
\begin{Lemma}\label{Lem1}
The following equation holds;
\begin{gather}
[H_{i,1}+B_i,H_{j,1}+B_j]\otimes 1+[B_i\otimes 1,A_j]-[B_j\otimes 1,A_i].\label{Lemeq1}
\end{gather}
\end{Lemma}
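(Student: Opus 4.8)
The plan is to show that the element in \eqref{Lemeq1} is zero. First I would expand the first bracket by bilinearity and invoke the defining relation \eqref{Eq2.1}, which gives $[H_{i,1},H_{j,1}]=0$, so that
\[
[H_{i,1}+B_i,H_{j,1}+B_j]=[H_{i,1},B_j]-[H_{j,1},B_i]+[B_i,B_j].
\]
It therefore suffices to verify
\[
\big([H_{i,1},B_j]-[H_{j,1},B_i]+[B_i,B_j]\big)\otimes1+[B_i\otimes1,A_j]-[B_j\otimes1,A_i]=0.
\]

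The second step is to compute $[H_{i,1},B_j]$ and $[H_{j,1},B_i]$. Each $B_j$ is a sum of products of two generators $e_{u,v}t^s$ in which exactly one index lies in $[b+1,a]\subseteq[n+1,a]$ and the other is at most $n$; this is precisely the situation covered by the relations \eqref{Eq2.11}--\eqref{Eq2.20} and the explicit formulas \eqref{Eq2.111}--\eqref{Eq2.118}. Applying the Leibniz rule, $[H_{i,1},e_{u,v}t^s\,e_{v',u'}t^{s'}]$ breaks into $[H_{i,1},e_{u,v}t^s]\,e_{v',u'}t^{s'}+e_{u,v}t^s\,[H_{i,1},e_{v',u'}t^{s'}]$, and each inner bracket is replaced by its explicit value. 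In parallel I would compute $[B_i,B_j]$ directly from the $\widehat{\mathfrak{gl}}(a)^{c_a}$ commutation relations, tracking the central terms proportional to $c_a$.

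For the cross terms, observe that $B_i\otimes1$ acts only on the first tensor leg, so for each monomial $X\otimes Y$ occurring in $A_j$ one has $[B_i\otimes1,X\otimes Y]=[B_i,X]\otimes Y$, reducing the computation to first-factor brackets. Now every factor of $B_i$ carries one index in $[b+1,a]$ while every left leg $X$ of $A_j$ carries indices only in $[1,n]$; consequently the central contribution of each elementary bracket vanishes (it would force a large index to equal a small one), the skew term $-\delta_{i_{\mathrm{small}},v}e_{u,j}t^\bullet$ vanishes for the same reason, and only the single term $\delta_{i_{\mathrm{small}},p}\,e_{u,q}t^\bullet$ survives. This prunes $[B_i\otimes1,A_j]$ and $[B_j\otimes1,A_i]$ down to a short list of ``large--small'' monomials.

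The final step is to collect the surviving contributions of the three brackets from the second step together with the pruned cross terms, relabel the summation indices, and check the term-by-term cancellation. I expect the main obstacle to be purely the bookkeeping: the contributions of $[H_{i,1},B_j]-[H_{j,1},B_i]$ must be matched against $[B_i,B_j]$ and against $[B_i\otimes1,A_j]-[B_j\otimes1,A_i]$ across triple sums indexed by $s$, by the interior index, and by $u\in[b+1,a]$. The genuinely delicate case is $i=0$ or $j=0$: there the affine wrap-around brings in the central element $c_a$, the degree shifts $t^{\pm1}$, and the extra diagonal pieces of $B_0$, so that the relevant index ranges split at both $n$ and $b$ and the telescoping of the $s$-sums has to be carried out with care.
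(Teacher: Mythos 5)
Your opening move (kill $[H_{i,1},H_{j,1}]$ by \eqref{Eq2.1}, then replace the remaining $[H_{i,1},\cdot]$ brackets by their evaluation values) coincides with the paper's, and your pruning of the cross terms $[B_i\otimes 1,A_j]$ is correct. But from there your route diverges completely from the paper's, and in a way that matters. The paper never performs the global cancellation you propose: having reduced the claim to the vanishing of \eqref{Lemeq100}, it observes that this is exactly the identity already verified in the proof of Theorem 5.1 of \cite{U5} in the case $b=n$ (via Remark~\ref{Rmk1}), and that changing $b$ only changes the summation range $[b+1,a]$ in the $B$'s; since the non-central parts of all commutators behave uniformly in that range, the only terms that can differ from the $b=n$ case are the few central (inner-product) pairings between the large-index factors of $B_i$ and those of $B_j$, which the paper writes out and checks are zero. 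So the paper's proof is a short reduction-plus-difference argument; yours is a from-scratch expansion, i.e.\ essentially a re-derivation of the computation of \cite{U5} in greater generality. That is viable in principle, but then the entire mathematical content is precisely the ``bookkeeping'' you defer to the last step, so as written the proposal is a plan for a proof rather than a proof.

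There is also one step that would fail as literally stated. You claim that in the Leibniz expansion of $[H_{i,1},B_j]$ ``each inner bracket is replaced by its explicit value'' because it is covered by \eqref{Eq2.11}--\eqref{Eq2.20}. That is not what those relations say: for adjacent indices ($j\in\{i,i+1\}$, and the wrap-around cases involving $H_{0,1}$, $e_{v,1}$, $e_{v,n}$), the relations \eqref{Eq2.13}, \eqref{Eq2.14}, \eqref{Eq2.17}--\eqref{Eq2.20} fix only \emph{sums} such as $[H_{i-1,1},e_{v,i}t^w]+[H_{i,1},e_{v,i}t^w]$, never the two brackets separately. This bites exactly in the pairs you must treat, e.g.\ $j=i+1$: the combination $[H_{i,1},B_{i+1}]-[H_{i+1,1},B_i]$ contains $\sum_{s\geq0}[H_{i,1},e_{u,i+1}t^{-s-1}e_{i+1,u}t^{s+1}]$ together with $\sum_{s\geq0}[H_{i+1,1},e_{u,i+1}t^{-s}e_{i+1,u}t^{s}]$, and only after shifting the summation index (this is what the different $t$-offsets built into $B_i$ versus $B_{i+1}$ are for) do these reorganize, up to boundary terms, into the imposed combinations $[H_{i,1}+H_{i+1,1},\cdot]$. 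This is the content compressed into the paper's phrase ``by the defining relations \eqref{Eq2.11}-\eqref{Eq2.20} and the form of $B_i$, it is no problem to assume\ldots''; your blanket substitution assumes strictly more than the defining relations of $Y^{a}_{\hbar,\ve}(\widehat{\mathfrak{sl}}(n))$ provide, and any execution of your plan has to deal with this reorganization (and its boundary terms) explicitly.
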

\begin{Lemma}\label{Lem2}
The following equation holds;
\begin{equation}
[(H_{i,1}+B_i)\otimes 1,F_j]-[(H_{j,1}+B_j)\otimes 1,F_i]+[1\otimes H_{i,1},F_j]-[1\otimes H_{j,1},F_i]+[A_i,F_j]-[A_j,F_i]=0.\label{Lemeq2}
\end{equation}
\end{Lemma}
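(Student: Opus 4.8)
The plan is to prove the vanishing in \eqref{Lemeq2} by evaluating each of the six commutators explicitly in $Y^a_{\hbar,\ve}(\widehat{\mathfrak{sl}}(n))\widehat{\otimes}Y^b_{\hbar,\ve}(\widehat{\mathfrak{sl}}(n))$ and then exhibiting the cancellation after normal-ordering every resulting summand and sorting it according to which tensor factor it occupies and into which index block --- the core $1\le u\le n$, the middle $n<v\le b$, or the top $b<u\le a$ --- its matrix indices fall. The simplification that makes this feasible is that each of $F_i$ and $F_j$ is a $\mathbb{Z}$-indexed sum of pure tensors whose first leg is supported on the columns $e_{\bullet,i}t^{\bullet}$ and whose second leg is supported on the rows $e_{i,\bullet}t^{\bullet}$, the inner ``middle'' index running over $n<v\le b$; consequently $1\otimes H_{i,1}$, $H_{i,1}\otimes1$ and $B_i\otimes1$ each act on only one leg, while $A_i$ acts on both.

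First I would treat the four terms $[H_{i,1}\otimes1,F_j]$ and $[1\otimes H_{i,1},F_j]$ together with their $i\leftrightarrow j$ partners. Because the matrix indices of $F_j$ satisfy $n<v\le b$, these brackets are governed exactly by the defining relations \eqref{Eq2.11} and \eqref{Eq2.12} and hence by the closed formulas \eqref{Eq2.111} for $[\widehat{\ev}_{\hbar,\ve}(H_{i,1}),e_{v,j}t^w]$ and \eqref{Eq2.113} for $[\widehat{\ev}_{\hbar,\ve}(H_{i,1}),e_{j,v}t^w]$, with \eqref{Eq2.115} and \eqref{Eq2.118} taking over at the affine node $i=0$. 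Substituting produces, on each leg, a sum of quadratic terms whose new summation index $u$ runs over the core block $1\le u\le n$, plus diagonal and boundary contributions.

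Next I would compute $[A_i,F_j]$, $[A_j,F_i]$ and $[B_i\otimes1,F_j]$, $[B_j\otimes1,F_i]$ using only the $\widehat{\mathfrak{gl}}(a)$ and $\widehat{\mathfrak{gl}}(b)$ brackets, since neither $A_i$ nor $B_i$ contains a Yangian generator. These are the terms that genuinely mix the index blocks: every summand of $A_i$ carries both of its matrix indices in the core block, so bracketing it against a leg of $F_j$ (one core index, one middle index) collapses, via the Kronecker deltas of the $\widehat{\mathfrak{gl}}$ relation, to tensors of the same middle shape, whereas $B_i$ carries top-block indices and pairs with $F_j$ to produce the mixed top--middle terms. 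My expectation is that, after shifting the loop variable $s\mapsto s+1$ and relabelling $u$, these contributions recombine with the core and boundary terms from the $H_{i,1}$-commutators, and that the antisymmetric organisation $[\,\cdot\,,F_j]-[\,\cdot\,,F_i]$ then forces the symmetric remainders to cancel in pairs, precisely the mechanism already invoked for \eqref{FF15} and \eqref{HA15}.

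The hardest part will be the bookkeeping of $[A_i,F_j]$ and $[A_j,F_i]$: $A_i$ is a sum over several index ranges in \emph{both} tensor factors, so each of its summands can bracket with either leg of $F_j$, and the many resulting loop sums must be brought into a common normal form before the matching becomes visible; one must also carry along the central elements $c_a,c_b$ and the extra diagonal terms that appear whenever a matrix index equals $1$ or $n$. The affine node $i=0$ (or $j=0$), where $A_0$, $B_0$ and $F_0$ take their special form linking the indices $1$ and $n$, demands a parallel but separate verification. I expect that once every summand is labelled by its tensor-factor support and index block, \eqref{Lemeq2} reduces to matching a small number of term groups, exactly as was done in Appendix A.
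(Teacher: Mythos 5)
Your proposal is correct and follows essentially the same route as the paper's proof: the paper likewise uses the defining relations \eqref{Eq2.11}--\eqref{Eq2.20} to replace every bracket of $H_{i,1}$ against a leg of $F_j$ by the explicit evaluation formulas \eqref{Eq2.111}--\eqref{Eq2.118}, computes $[A_i,F_j]$ and the $B_i$-contributions directly from the $\widehat{\mathfrak{gl}}$ commutation relations, and then cancels the resulting summands pairwise after relabelling the loop indices. The only differences are organisational: the paper folds $B_i$ into $(H_{i,1}+B_i)\otimes 1$ as a single unit and writes out only the representative case $i<j$ (with the generic, non-affine forms of $A_i$ and $F_j$), exactly the bookkeeping reduction you anticipate.
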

\subsection{The proof of Lemma~\ref{Lem1}}
In this subsection, we prove Lemma~\ref{Lem1}. Let us consider the first term of the left hand side of \eqref{HA15}. By the defining relation $[H_{i,1},H_{j,1}]=0$, we obtain
\begin{align*}
&\quad\text{the first term of the left hand side of \eqref{HA15}}\\
&=[H_{i,1},B_j]-[H_{j,1},B_i]+[B_i,B_j].
\end{align*}
By the defining relations \eqref{Eq2.11}-\eqref{Eq2.20} and the form of $B_i$, it is no problem to assume that
\begin{gather*}
[H_{i,1},e_{v,j}t^w]=[\ev_{\hbar,\ve-(a-b)\hbar}(H_{i,1}),e_{v,j}t^w],\\
[H_{i,1},e_{j,v}t^w]=[\ev_{\hbar,\ve-(a-b)\hbar}(H_{i,1}),e_{j,v}t^w]
\end{gather*}
hold in $Y^a_{\hbar,\ve-(a-b)\hbar}(\widehat{\mathfrak{sl}}(n))$. Thus, it is enough to show that
\begin{align}
&[\ev_{\hbar,\ve-(a-b)\hbar}(H_{i,1})+B_i,\ev_{\hbar,\ve-(a-b)\hbar}(H_{j,1})+B_j]\otimes 1+[B_i\otimes 1,A_j]-[B_j\otimes 1,A_i]\label{Lemeq100}
\end{align}
is equal to zero.
By Theorem 5.1 in \cite{U5} and Remark~\ref{Rmk1}, \eqref{Lemeq100} holds in the case when $b=n$. Comparing the two cases when $b=n$ and $b>n$, the difference of \eqref{Lemeq100} comes from the difference of the inner form. In the computation of \eqref{Lemeq100}, the terms affected by the inner product are 
\begin{align*}
&\displaystyle\sum_{s_1,s_2\geq0}\limits\sum_{u_1=b+1}^a\limits\sum_{u_2=b+1}^a\limits e_{u_1,i}t^{-s_1-1}(e_{i,u_1}t^{s_1+1},e_{u_2,j}t^{-s_1-1})e_{j,u_2}t^{s_2+1}\\
&\quad+\displaystyle\sum_{s_1,s_2\geq0}\limits\sum_{u_1=b+1}^a\limits\sum_{u_2=b+1}^a\limits e_{u_2,j}t^{-s_2-1}(e_{u_1,i}t^{-s_1-1},e_{j,u_2}t^{s_2+1})e_{i,u_1}t^{s_1+1}
\end{align*}
and
\begin{align*}
&\displaystyle\sum_{s_1,s_2\geq0}\limits\sum_{u_1=b+1}^a\limits\sum_{u_2=b+1}^a\limits e_{i,u_1}t^{-s_1}(e_{u_1,i}t^{s_1},e_{j,u_2}t^{-s_2})e_{u_2,j}t^{s_2}\\
&\quad+\displaystyle\sum_{s_1,s_2\geq0}\limits\sum_{u_1=b+1}^a\limits\sum_{u_2=b+1}^a\limits e_{j,u_2}t^{-s_2}(e_{i,u_1}t^{-s_1},e_{u_2,j}t^{s_2})e_{u_1,i}t^{s_1},
\end{align*}
where $(\ ,\ )$ is an inner product on $U(\widehat{\mathfrak{gl}}(a)^{c_a})$.
By a direct computation, these terms are equal to zero. Thus, we find that \eqref{Lemeq100} is equal to zero.
\subsection{The proof of Lemma~\ref{Lem2}}
In this subsection, we prove Lemma~\ref{Lem2}.
By the similar discussion to the one in the previous subsection, it is no problem to assume that
\begin{gather*}
[H_{i,1},e_{v,j}t^w]=[\ev_{\hbar,\ve}(H_{i,1}),e_{v,j}t^w],\\
[H_{i,1},e_{j,v}t^w]=[\ev_{\hbar,\ve}(H_{i,1}),e_{j,v}t^w].
\end{gather*}
hold in $Y^a_{\hbar,\ve}(\widehat{\mathfrak{sl}}(n))$.
We only prove the case when $i<j$. Let us compute $[A_i,F_j]$. By a direct computation, we obtain
\begin{align}
&\quad-[\hbar (e_{i+1,i+1}\otimes e_{i,i}+e_{i,i}\otimes e_{i+1,i+1}),\hbar\sum_{w\in\mathbb{Z}}\limits\sum_{v=n+1}^b e_{v,j}t^w\otimes e_{j,v}t^{-w}]\nonumber\\
&=\hbar^2\sum_{w\in\mathbb{Z}}\limits\sum_{v=n+1}^b\delta_{j,i+1}e_{v,j}t^w\otimes e_{i,i}e_{j,v}t^{-w}-\hbar^2\sum_{w\in\mathbb{Z}}\limits\sum_{v=n+1}^b\delta_{i,j}e_{v,j}t^we_{i+1,i+1}\otimes e_{j,v}t^{-w}\nonumber\\
&\quad+\hbar^2\sum_{w\in\mathbb{Z}}\limits\sum_{v=n+1}^b\delta_{j,i}e_{v,j}t^w \otimes e_{i+1,i+1}e_{j,v}t^{-w}-\hbar^2\sum_{w\in\mathbb{Z}}\limits\sum_{v=n+1}^b \delta_{j,i+1}e_{v,j}t^we_{i,i}\otimes e_{j,v}t^{-w},\label{qu1}
\end{align}
\begin{align}
&\quad[\hbar\displaystyle\sum_{s \geq 0} \limits\displaystyle\sum_{u=1}^{i}\limits (-e_{u,i}t^{-s-1}\otimes e_{i,u}t^{s+1}+e_{i,u}t^{-s}\otimes e_{u,i}t^{s}),\hbar\sum_{w\in\mathbb{Z}}\limits\sum_{v=n+1}^b e_{v,j}t^w\otimes e_{j,v}t^{-w}]\nonumber\\
&=-\hbar^2\sum_{s\geq0}\limits\sum_{w\in\mathbb{Z}}\limits\sum_{v=n+1}^b\delta(j\leq i)e_{j,i}t^{-s-1}e_{v,j}t^{w}\otimes e_{i,v}t^{s-w+1}\nonumber\\
&\quad+\hbar^2\sum_{s\geq0}\limits\sum_{w\in\mathbb{Z}}\limits\sum_{v=n+1}^b\delta(j\leq i)e_{v,i}t^{-s+w-1}\otimes e_{j,v}t^{-w}e_{i,j}t^{s+1}\nonumber\\
&\quad-\hbar^2\sum_{s\geq0}\limits\sum_{w\in\mathbb{Z}}\limits\displaystyle\sum_{u=1}^{i}\limits\sum_{v=n+1}^b\delta_{i,j}e_{v,u}t^{-s+w}\otimes e_{u,i}t^{s}e_{j,v}t^{-w}\nonumber\\
&\quad+\hbar^2\sum_{s\geq0}\limits\sum_{w\in\mathbb{Z}}\limits\displaystyle\sum_{u=1}^{i}\limits\sum_{v=n+1}^b\delta_{i,j}e_{v,j}t^{w}e_{i,u}t^{-s}\otimes e_{u,v}t^{s-w},\label{qu3}
\end{align}
\begin{align}
&\quad[\hbar\displaystyle\sum_{s \geq 0} \limits\displaystyle\sum_{u=i+1}^{n}\limits (-e_{u,i}t^{-s}\otimes e_{i,u}t^{s}+e_{i,u}t^{-s-1}\otimes e_{u,i}t^{s+1}),\hbar\sum_{w\in\mathbb{Z}}\limits\sum_{v=n+1}^b e_{v,j}t^w\otimes e_{j,v}t^{-w}]\nonumber\\
&=-\hbar^2\sum_{s\geq0}\limits\sum_{w\in\mathbb{Z}}\limits\sum_{v=n+1}^b\delta(j>i)e_{j,i}t^{-s}e_{v,j}t^{w}\otimes e_{i,v}t^{s-w}\nonumber\\
&\quad+\hbar^2\sum_{s\geq0}\limits\sum_{w\in\mathbb{Z}}\limits\sum_{v=n+1}^b\delta(j>i)e_{v,i}t^{-s+w}\otimes e_{j,v}t^{-w}e_{i,j}t^{s}\nonumber\\
&\quad-\hbar^2\sum_{s\geq0}\limits\sum_{w\in\mathbb{Z}}\limits\displaystyle\sum_{u=i+1}^{n}\limits\sum_{v=n+1}^b\delta_{i,j}e_{v,u}t^{-s+w-1}\otimes e_{u,i}t^{s+1}e_{j,v}t^{-w}\nonumber\\
&\quad+\hbar^2\sum_{s\geq0}\limits\sum_{w\in\mathbb{Z}}\limits\displaystyle\sum_{u=i+1}^{n}\limits\sum_{v=n+1}^b\delta_{i,j}e_{v,j}t^{w}e_{j,u}t^{-s-1}\otimes e_{u,v}t^{s-w+1},\label{qu4}
\end{align}
\begin{align}
&\quad-[\hbar\displaystyle\sum_{s \geq 0}\limits\displaystyle\sum_{u=1}^{i}\limits (-e_{u,i+1}t^{-s-1}\otimes e_{i+1,u}t^{s+1}+e_{i+1,u}t^{-s}\otimes e_{u,i+1}t^s),\hbar\sum_{w\in\mathbb{Z}}\limits\sum_{v=n+1}^b e_{v,j}t^w\otimes e_{j,v}t^{-w}]\nonumber\\
&=\hbar^2\sum_{s\geq0}\limits\sum_{w\in\mathbb{Z}}\limits\sum_{v=n+1}^b\delta(j\leq i)e_{j,i+1}t^{-s-1}e_{v,j}t^{w}\otimes e_{i+1,v}t^{s-w+1}\nonumber\\
&\quad-\hbar^2\sum_{s\geq0}\limits\sum_{w\in\mathbb{Z}}\limits\sum_{v=n+1}^b\delta(j\leq i)e_{v,i+1}t^{-s+w-1}\otimes e_{j,v}t^{-w}e_{i+1,j}t^{s+1}\nonumber\\
&\quad+\hbar^2\sum_{s\geq0}\limits\sum_{w\in\mathbb{Z}}\limits\displaystyle\sum_{u=1}^{i}\limits\sum_{v=n+1}^b\delta_{i+1,j}e_{v,u}t^{-s+w}\otimes e_{u,i+1}t^{s}e_{j,v}t^{-w}\nonumber\\
&\quad-\hbar^2\sum_{s\geq0}\limits\sum_{w\in\mathbb{Z}}\limits\displaystyle\sum_{u=1}^{i}\limits\sum_{v=n+1}^b\delta_{i+1,j}e_{v,j}t^{w}e_{i+1,u}t^{-s}\otimes e_{u,v}t^{s-w},\label{qu5}
\end{align}
\begin{align}
&\quad-[\hbar\displaystyle\sum_{s \geq 0}\limits\displaystyle\sum_{u=i+1}^{n} \limits (-e_{u,i+1}t^{-s}\otimes e_{i+1,u}t^{s}+e_{i+1,u}t^{-s-1}\otimes e_{u,i+1}t^{s+1},\hbar\sum_{w\in\mathbb{Z}}\limits\sum_{v=n+1}^b e_{v,j}t^w\otimes e_{j,v}t^{-w}]\nonumber\\
&=\hbar^2\sum_{s\geq0}\limits\sum_{w\in\mathbb{Z}}\limits\sum_{v=n+1}^b\delta(j>i)e_{j,i+1}t^{-s}e_{v,j}t^{w}\otimes e_{i+1,v}t^{s-w}\nonumber\\
&\quad-\hbar^2\sum_{s\geq0}\limits\sum_{w\in\mathbb{Z}}\limits\sum_{v=n+1}^b\delta(j>i)e_{v,i+1}t^{-s+w}\otimes e_{j,v}t^{-w}e_{i+1,j}t^{s}\nonumber\\
&\quad+\hbar^2\sum_{s\geq0}\limits\sum_{w\in\mathbb{Z}}\limits\displaystyle\sum_{u=i+1}^{n}\limits\sum_{v=n+1}^b\delta_{i+1,j}e_{v,u}t^{-s+w-1}\otimes e_{u,i+1}t^{s+1}e_{j,v}t^{-w}\nonumber\\
&\quad-\hbar^2\sum_{s\geq0}\limits\sum_{w\in\mathbb{Z}}\limits\displaystyle\sum_{u=i+1}^{n}\limits\sum_{v=n+1}^b\delta_{i+1,j}e_{v,j}t^{w}e_{i+1,u}t^{-s-1}\otimes e_{u,v}t^{s-w+1}.\label{qu7}
\end{align}
By the definition of $A_i$ and $F_i$, we obtain
\begin{align*}
&\quad[A_i,F_j]-[A_j,F_i]\\
&=\eqref{qu1}_{i,j}-\eqref{qu1}_{i,j+1}-\eqref{qu1}_{j,i}+\eqref{qu1}_{j,i+1}\\
&\quad+\eqref{qu3}_{i,j}-\eqref{qu3}_{i,j+1}-\eqref{qu3}_{j,i}+\eqref{qu3}_{j,i+1}\\
&\quad+\eqref{qu4}_{i,j}-\eqref{qu4}_{i,j+1}-\eqref{qu4}_{j,i}+\eqref{qu4}_{j,i+1}\\
&\quad+\eqref{qu5}_{i,j}-\eqref{qu5}_{i,j+1}-\eqref{qu5}_{j,i}+\eqref{qu5}_{j,i+1}\\
&\quad+\eqref{qu7}_{i,j}-\eqref{qu7}_{i,j+1}-\eqref{qu7}_{j,i}+\eqref{qu7}_{j,i+1}.
\end{align*}
By the assumption $i<j$, we obtain
\begin{align*}
&\quad[A_i,F_j]-[A_j,F_i]\\
&=\eqref{qu1}_{i,j,1}+\eqref{qu1}_{j,i+1,2}+\eqref{qu1}_{j,i+1,3}+\eqref{qu1}_{i,j,4}\\
&\quad-\eqref{qu3}_{j,i,1}+\eqref{qu3}_{j,i+1,1}-\eqref{qu3}_{j,i,2}+\eqref{qu3}_{j,i+1,2}+\eqref{qu3}_{j,i+1,3}+\eqref{qu3}_{j,i+1,4}\\
&\quad+\eqref{qu4}_{i,j,1}-\eqref{qu4}_{i,j+1,1}+\eqref{qu4}_{i,j,2}-\eqref{qu4}_{i,j+1,2}+\eqref{qu4}_{j,i+1,3}+\eqref{qu4}_{j,i+1,4}\\
&\quad-\eqref{qu5}_{j,i,1}+\eqref{qu5}_{j,i+1,1}-\eqref{qu5}_{j,i,2}+\eqref{qu5}_{j,i+1,2}+\eqref{qu5}_{i,j,3}+\eqref{qu5}_{i,j,4}\\
&\quad+\eqref{qu7}_{i,j,1}-\eqref{qu7}_{i,j+1,1}+\eqref{qu7}_{i,j,2}-\eqref{qu7}_{i,j+1,2}+\eqref{qu7}_{i,j,3}+\eqref{qu7}_{i,j,4}.
\end{align*}
By a direct computation, we obtain
\begin{align}
&\quad\eqref{qu5}_{i,j,4}+\eqref{qu3}_{j,i+1,4}\nonumber\\
&=-\hbar^2\sum_{s\geq0}\limits\sum_{w\in\mathbb{Z}}\limits\displaystyle\sum_{u=1}^{i}\limits\sum_{v=n+1}^b\delta_{i+1,j}e_{v,i+1}t^{w}e_{i+1,u}t^{-s}\otimes e_{u,v}t^{s-w}\nonumber\\
&\quad+\hbar^2\sum_{s\geq0}\limits\sum_{w\in\mathbb{Z}}\limits\displaystyle\sum_{u=1}^{j}\limits\sum_{v=n+1}^b\delta_{j,i+1}e_{v,j}t^{w}e_{j,u}t^{-s}\otimes e_{u,v}t^{s-w}\nonumber\\
&=\hbar^2\sum_{s\geq0}\limits\sum_{w\in\mathbb{Z}}\limits\sum_{v=n+1}^b\delta_{j,i+1}e_{v,j}t^{w}e_{j,j}t^{-s}\otimes e_{j,v}t^{s-w},\label{equation100}
\end{align}
\begin{align}
&\quad\eqref{qu7}_{i,j,4}+\eqref{qu4}_{j,i+1,4}\nonumber\\
&=-\hbar^2\sum_{s\geq0}\limits\sum_{w\in\mathbb{Z}}\limits\displaystyle\sum_{u=i+1}^{n}\limits\sum_{v=n+1}^b\delta_{i+1,j}e_{v,i+1}t^{w}e_{i+1,u}t^{-s-1}\otimes e_{u,v}t^{s-w+1}\nonumber\\
&\quad+\hbar^2\sum_{s\geq0}\limits\sum_{w\in\mathbb{Z}}\limits\displaystyle\sum_{u=j+1}^{n}\limits\sum_{v=n+1}^b\delta_{j,i+1}e_{v,j}t^{w}e_{j,u}t^{-s-1}\otimes e_{u,v}t^{s-w+1}\nonumber\\
&=-\hbar^2\sum_{s\geq0}\limits\sum_{w\in\mathbb{Z}}\limits\sum_{v=n+1}^b\delta_{i+1,j}e_{v,i+1}t^{-s-1}e_{i+1,i+1}t^{w}\otimes e_{i+1,v}t^{s-w+1}.\label{equation101}
\end{align}
By adding \eqref{equation100} and \eqref{equation101}, we have
\begin{align}
&\quad\eqref{qu5}_{i,j,4}+\eqref{qu3}_{j,i+1,4}+\eqref{qu7}_{i,j,4}+\eqref{qu4}_{j,i+1,4}\nonumber\\
&=\hbar^2\sum_{w\in\mathbb{Z}}\limits\sum_{v=n+1}^b\delta_{j,i+1}e_{v,j}t^we_{j,j}\otimes e_{j,v}t^{-w}.\label{equation111}
\end{align}
Similarly to \eqref{equation111}, we obtain
\begin{align}
&\quad\eqref{qu5}_{i,j,3}+\eqref{qu3}_{j,i+1,3}\nonumber\\
&=\hbar^2\sum_{s\geq0}\limits\sum_{w\in\mathbb{Z}}\limits\displaystyle\sum_{u=1}^{i}\limits\sum_{v=n+1}^a\delta_{i+1,j}e_{v,u}t^{-s+w}\otimes e_{u,i+1}t^{s}e_{i+1,v}t^{-w}\nonumber\\
&\quad-\hbar^2\sum_{s\geq0}\limits\sum_{w\in\mathbb{Z}}\limits\displaystyle\sum_{u=1}^{j}\limits\sum_{v=n+1}^a\delta_{j,i+1}e_{v,u}t^{-s+w}\otimes e_{u,j}t^{s}e_{j,v}t^{-w}\nonumber\\
&=-\hbar^2\sum_{s\geq0}\limits\sum_{w\in\mathbb{Z}}\limits\sum_{v=n+1}^a\delta_{j,i+1}e_{v,j}t^{-s+w}\otimes e_{j,j}t^{s}e_{j,v}t^{-w},\label{equation102}
\end{align}
\begin{align}
&\quad\eqref{qu7}_{i,j,3}+\eqref{qu4}_{j,i+1,3}\nonumber\\
&=\hbar^2\sum_{s\geq0}\limits\sum_{w\in\mathbb{Z}}\limits\displaystyle\sum_{u=i+1}^{n}\limits\sum_{v=n+1}^a\delta_{i+1,j}e_{v,u}t^{-s+w-1}\otimes e_{u,i+1}t^{s+1}e_{i+1,v}t^{-w}\nonumber\\
&\quad-\hbar^2\sum_{s\geq0}\limits\sum_{w\in\mathbb{Z}}\limits\displaystyle\sum_{u=j+1}^{n}\limits\sum_{v=n+1}^a\delta_{j,i+1}e_{v,u}t^{-s+w-1}\otimes e_{u,j}t^{s+1}e_{j,v}t^{-w}\nonumber\\
&=\hbar^2\sum_{s\geq0}\limits\sum_{w\in\mathbb{Z}}\limits\displaystyle\sum_{v=n+1}^a\delta_{i+1,j}e_{v,i+1}t^{-s+w-1}\otimes e_{i+1,i+1}t^{s+1}e_{i+1,v}t^{-w}.\label{equation103}
\end{align}
By adding \eqref{equation102} and \eqref{equation103}, we have
\begin{align}
&\quad\eqref{qu5}_{i,j,3}+\eqref{qu3}_{j,i+1,3}+\eqref{qu7}_{i,j,3}+\eqref{qu4}_{j,i+1,3}\nonumber\\
&=-\hbar^2\sum_{w\in\mathbb{Z}}\limits\sum_{v=n+1}^a\delta_{j,i+1}e_{v,j}t^{w}\otimes e_{j,j}e_{j,v}t^{-w}.\label{equation112}
\end{align}
Then, we obtain
\begin{align*}
&\quad[A_i,F_j]-[A_j,F_i]\\
&=\eqref{qu1}_{i,j,1}+\eqref{qu1}_{i,j,4}+\eqref{qu1}_{j,i+1,2}+\eqref{qu1}_{j,i+1,3}\\
&\quad-\eqref{qu3}_{j,i,1}-\eqref{qu3}_{j,i,2}+\eqref{qu3}_{j,i+1,1}+\eqref{qu3}_{j,i+1,2}\\
&\quad+\eqref{qu4}_{i,j,1}+\eqref{qu4}_{i,j,2}-\eqref{qu4}_{i,j+1,1}-\eqref{qu4}_{i,j+1,2}\\
&\quad-\eqref{qu5}_{j,i,1}-\eqref{qu5}_{j,i,2}+\eqref{qu5}_{j,i+1,1}+\eqref{qu5}_{j,i+1,2}\\
&\quad+\eqref{qu7}_{i,j,1}+\eqref{qu7}_{i,j,2}-\eqref{qu7}_{i,j+1,1}-\eqref{qu7}_{i,j+1,2}+\eqref{equation111}+\eqref{equation112}.
\end{align*}

Next, let us compute $[(H_{i,1}+B_i)\otimes1,F_j]-[(H_{j,1}+B_j)\otimes1,F_i]$. By the defining relation,
By a direct computation, we obtain
\begin{align}
&\quad[(H_{i,1}+B_i)\otimes1,\hbar\sum_{w\in\mathbb{Z}}\limits\sum_{v=n+1}^b e_{v,j}t^w\otimes e_{j,v}t^{-w}]\nonumber\\
&=\dfrac{i}{2}\sum_{w\in\mathbb{Z}}\limits\sum_{v=n+1}^b\hbar^2\delta_{i,j}e_{v,j}t^w\otimes e_{j,v}t^{-w}-\dfrac{i}{2}\sum_{w\in\mathbb{Z}}\limits\sum_{v=n+1}^b\hbar^2\delta_{i+1,j}e_{v,j}t^w\otimes e_{j,v}t^{-w}\nonumber\\
&\quad+\sum_{w\in\mathbb{Z}}\limits\sum_{v=n+1}^b\hbar^2\delta_{i,j}e_{v,j}t^we_{i+1,i+1}\otimes e_{j,v}t^{-w}\nonumber\\
&\quad+\sum_{w\in\mathbb{Z}}\limits\sum_{v=n+1}^b\hbar^2\delta_{i+1,j}e_{i,i}e_{v,j}t^w\otimes e_{j,v}t^{-w}\nonumber\\
&\quad-\hbar^2\sum_{w\in\mathbb{Z}}\limits\sum_{v=n+1}^b\delta(j\leq i)\displaystyle\sum_{s \geq 0}\limits e_{i,j}t^{-s}e_{v,i}t^{s+w}\otimes e_{j,v}t^{-w}\nonumber\\
&\quad-\hbar^2\sum_{w\in\mathbb{Z}}\limits\sum_{v=n+1}^b\displaystyle\sum_{s \geq 0}\limits\sum_{u=1}^i\limits\delta_{i,j}e_{v,u}t^{w-s}e_{u,i}t^s\otimes e_{j,v}t^{-w}\nonumber\\
&\quad-\hbar^2\sum_{w\in\mathbb{Z}}\limits\sum_{v=n+1}^b\displaystyle\sum_{s \geq 0} \limits\delta(j>i)e_{i,j}t^{-s-1}e_{v,i}t^{s+w+1}\otimes e_{j,v}t^{-w} \nonumber\\
&\quad-\hbar^2\sum_{w\in\mathbb{Z}}\limits\sum_{v=n+1}^b\displaystyle\sum_{s \geq 0} \limits\displaystyle\sum_{u=i+1}^{n}\limits\delta_{i,j}e_{v,u}t^{w-s-1}e_{u,i}t^{s+1}\otimes e_{j,v}t^{-w}\nonumber\\
&\quad+\hbar^2\sum_{w\in\mathbb{Z}}\limits\sum_{v=n+1}^b\displaystyle\sum_{s \geq 0}\limits\displaystyle\sum_{u=1}^{i}\limits \delta(j\leq i)e_{i+1,j}t^{-s}e_{v,i+1}t^{s+w}\otimes e_{j,v}t^{-w}\nonumber\\
&\quad+\hbar^2\sum_{w\in\mathbb{Z}}\limits\sum_{v=n+1}^b\displaystyle\sum_{s \geq 0}\limits\displaystyle\sum_{u=1}^{i}\limits \delta_{i+1,j}e_{v,u}t^{w-s}e_{u,i+1}t^s\otimes e_{j,v}t^{-w}\nonumber\\
&\quad+\hbar^2\sum_{w\in\mathbb{Z}}\limits\sum_{v=n+1}^b\displaystyle\sum_{s \geq 0}\limits\delta(j>i)e_{i+1,j}t^{-s-1}e_{v,i+1}t^{s+w+1}\otimes e_{j,v}t^{-w}\nonumber\\
&\quad+\hbar^2\sum_{w\in\mathbb{Z}}\limits\sum_{v=n+1}^b\displaystyle\sum_{s \geq 0}\limits\displaystyle\sum_{u=i+1}^{n} \limits \delta_{i+1,j}e_{v,u}t^{w-s-1}e_{u,i+1}t^{s+1}\otimes e_{j,v}t^{-w}\nonumber\\
&\quad-\delta_{i,j}\hbar^2\displaystyle\sum_{s\geq0}\limits\displaystyle\sum_{u=b+1}^a\limits\sum_{w\in\mathbb{Z}}\limits\sum_{v=n+1}^b e_{u,i}t^{-s-1}e_{v,u}t^{s+w+1}\otimes e_{j,v}t^{-w}\nonumber\\
&\quad-\delta_{i,j}\hbar^2\displaystyle\sum_{s\geq0}\limits\displaystyle\sum_{u=b+1}^a\limits\sum_{w\in\mathbb{Z}}\limits\sum_{v=n+1}^b e_{v,u}t^{w-s}e_{u,i}t^{s}\otimes e_{j,v}t^{-w}\nonumber\\
&\quad+\delta_{i+1,j}\hbar^2\displaystyle\sum_{s\geq0}\limits\displaystyle\sum_{u=b+1}^a\limits\sum_{w\in\mathbb{Z}}\limits\sum_{v=n+1}^b e_{u,i+1}t^{-s-1}e_{v,u}t^{s+w+1}\otimes e_{j,v}t^{-w}\nonumber\\
&\quad-\delta_{i+1,j}\hbar^2\displaystyle\sum_{s\geq0}\limits\displaystyle\sum_{u=b+1}^a\limits\sum_{w\in\mathbb{Z}}\limits\sum_{v=n+1}^b e_{v,u}t^{w-s}e_{u,i+1}t^{s}\otimes e_{j,v}t^{-w}.\label{equation1}
\end{align}
By the assumption $i<j$, we have
\begin{align*}
&\quad[(H_{i,1}+B_i)\otimes 1,F_j]-[(H_{j,1}+B_i)\otimes 1,F_i]\\
&=\eqref{equation1}_{j,i+1,1}+\eqref{equation1}_{i,j,2}+\eqref{equation1}_{j,i+1,3}+\eqref{equation1}_{j,i+1,4}\\
&\quad-\eqref{equation1}_{j,i,5}+\eqref{equation1}_{j,i+1,5}+\eqref{equation1}_{j,i+1,6}\\
&\quad+\eqref{equation1}_{i,j,7}-\eqref{equation1}_{i,j+1,7}+\eqref{equation1}_{j,i+1,8}-\eqref{equation1}_{j,i,9}+\eqref{equation1}_{j,i+1,9}+\eqref{equation1}_{i,j,10}\\
&\quad+\eqref{equation1}_{i,j,11}-\eqref{equation1}_{i,j+1,11}+\eqref{equation1}_{i,j,12}\\
&\quad+\eqref{equation1}_{j,i+1,13}+\eqref{equation1}_{j,i+1,14}+\eqref{equation1}_{i,j,15}+\eqref{equation1}_{i,j,16}.
\end{align*}
By a direct computation, we obtain
\begin{align}
&\quad\eqref{equation1}_{j,i+1,1}+\eqref{equation1}_{i,j,2}\nonumber\\
&=\dfrac{j}{2}\sum_{w\in\mathbb{Z}}\limits\sum_{v=n+1}^b\hbar^2\delta_{j,i+1}e_{v,i+1}t^w\otimes e_{i+1,v}t^{-w}-\dfrac{i}{2}\sum_{w\in\mathbb{Z}}\limits\sum_{v=n+1}^b\hbar^2\delta_{i+1,j}e_{v,j}t^w\otimes e_{j,v}t^{-w}\nonumber\\
&=\dfrac{1}{2}\sum_{w\in\mathbb{Z}}\limits\sum_{v=n+1}^b\hbar^2\delta_{j,i+1}e_{v,i+1}t^w\otimes e_{i+1,v}t^{-w},\label{equation201}
\end{align}
By a direct computation, we obtain
\begin{gather}
\eqref{equation1}_{j,i+1,13}+\eqref{equation1}_{i,j,15}=0,\\
\eqref{equation1}_{j,i+1,14}+\eqref{equation1}_{i,j,16}=0.
\end{gather}
By using
\begin{align*}
&\quad\eqref{equation1}_{j,i+1,6}+\eqref{equation1}_{i,j,10}\nonumber\\
&=-\hbar^2\sum_{w\in\mathbb{Z}}\limits\sum_{v=n+1}^b\displaystyle\sum_{s \geq 0}\limits\sum_{u=1}^{j}\limits\delta_{j,i+1}e_{v,u}t^{w-s}e_{u,j}t^s\otimes e_{i+1,v}t^{-w}\nonumber\\
&\quad+\hbar^2\sum_{w\in\mathbb{Z}}\limits\sum_{v=n+1}^b\displaystyle\sum_{s \geq 0}\limits\displaystyle\sum_{u=1}^{i}\limits \delta_{i+1,j}e_{v,u}t^{w-s}e_{u,i+1}t^s\otimes e_{j,v}t^{-w}\nonumber\\
&=-\hbar^2\sum_{w\in\mathbb{Z}}\limits\sum_{v=n+1}^b\displaystyle\sum_{s \geq 0}\limits\delta_{j,i+1}e_{v,j}t^{w-s}e_{j,j}t^s\otimes e_{i+1,v}t^{-w}
\end{align*}
and
\begin{align*}
&\quad\eqref{equation1}_{j,i+1,8}+\eqref{equation1}_{i,j,12}\nonumber\\
&=-\hbar^2\sum_{w\in\mathbb{Z}}\limits\sum_{v=n+1}^b\displaystyle\sum_{s \geq 0} \limits\displaystyle\sum_{u=j+1}^{n}\limits\delta_{i+1,j}e_{i+1,v}t^{-w}\otimes e_{v,u}t^{w-s-1}e_{u,j}t^{s+1}\nonumber\\
&\quad+\hbar^2\sum_{w\in\mathbb{Z}}\limits\sum_{v=n+1}^b\displaystyle\sum_{s \geq 0}\limits\displaystyle\sum_{u=i+1}^{n} \limits \delta_{i+1,j}e_{v,u}t^{w-s-1}e_{u,i+1}t^{s+1}\otimes e_{j,v}t^{-w}\nonumber\\
&=\hbar^2\sum_{w\in\mathbb{Z}}\limits\sum_{v=n+1}^b\displaystyle\sum_{s \geq 0}\limits\delta_{i+1,j}e_{v,i+1}t^{w-s-1}e_{i+1,i+1}t^{s+1}\otimes e_{j,v}t^{-w},
\end{align*}
we find that
\begin{align}
&\quad\eqref{equation1}_{j,i+1,6}+\eqref{equation1}_{i,j,10}+\eqref{equation1}_{j,i+1,8}+\eqref{equation1}_{i,j,12}\nonumber\\
&=-\hbar^2\sum_{w\in\mathbb{Z}}\limits\sum_{v=n+1}^b\delta_{j,i+1}e_{v,j}t^{w}e_{j,j}\otimes e_{i+1,v}t^{-w}.\label{equation202}
\end{align}
By a direct computation, we obtain
\begin{align}
&\quad\eqref{equation111}+\eqref{equation202}=0,\\
&\quad\eqref{qu1}_{j,i+1,2}+\eqref{equation1}_{j,i+1,3}=0,\\
&\quad\eqref{qu1}_{i,j,4}+\eqref{equation1}_{i,j,4}=0,\\
&\quad-\eqref{qu3}_{j,i,1}+\eqref{equation1}_{i,j,7}=0,\\
&\quad\eqref{qu4}_{i,j,1}-\eqref{equation1}_{j,i,5}=0,\\
&\quad-\eqref{qu5}_{j,i,1}-\eqref{equation1}_{i,j+1,7}=0,\\
&\quad\eqref{qu7}_{i,j,1}+\eqref{equation1}_{j,i+1,5}=0,\\
&\quad\eqref{qu3}_{j,i+1,1}+\eqref{equation1}_{i,j,11}=0,\\
&\quad-\eqref{qu4}_{i,j+1,1}-\eqref{equation1}_{j,i,9}=0,\\
&\quad\eqref{qu5}_{j,i+1,1}-\eqref{equation1}_{i,j+1,11}=0,\\
&\quad-\eqref{qu7}_{i,j+1,1}+\eqref{equation1}_{j,i+1,9}=0.\label{saigo}
\end{align}
Then, we find that
\begin{align}
&\quad[(H_{i,1}+B_i)\otimes 1,F_j]-[(H_{j,1}+B_j)\otimes 1,F_i]\nonumber\\
&=\eqref{qu1}_{i,j,1}+\eqref{qu1}_{i,j,4}-\eqref{qu3}_{j,i,2}+\eqref{qu3}_{j,i+1,2}\nonumber\\
&\quad+\eqref{qu4}_{i,j,2}-\eqref{qu4}_{i,j+1,2}-\eqref{qu5}_{j,i,2}+\eqref{qu5}_{j,i+1,2}\nonumber\\
&\quad+\eqref{qu7}_{i,j,2}-\eqref{qu7}_{i,j+1,2}+\eqref{equation201}.\label{saisyo}
\end{align}

Next, let us compute $[1\otimes H_{i,1},F_j]-[1\otimes H_{i,1},F_i]$.
By a direct computation, we obtain
\begin{align}
&\quad[1\otimes H_{i,1},\hbar\sum_{w\in\mathbb{Z}}\limits\sum_{v=n+1}^b\limits e_{v,j}t^{-w}\otimes e_{j,v}t^w]\nonumber\\
&=-\dfrac{i}{2}\hbar^2\sum_{w\in\mathbb{Z}}\limits\sum_{v=n+1}^b\limits\delta_{i,j}e_{v,j}t^{-w}\otimes e_{j,v}t^w+\dfrac{i}{2}\hbar^2\sum_{w\in\mathbb{Z}}\limits\sum_{v=n+1}^b\limits\delta_{i+1,j}e_{v,j}t^{-w}\otimes e_{j,v}t^w\nonumber\\
&\quad-\hbar^2 \sum_{w\in\mathbb{Z}}\limits\sum_{v=n+1}^b\limits\delta_{i,j}e_{v,j}t^{-w}\otimes e_{j,v}t^we_{i+1,i+1}-\hbar^2 \sum_{w\in\mathbb{Z}}\limits\sum_{v=n+1}^b\limits \delta_{i+1,j}e_{v,j}t^{-w}\otimes e_{i,i}e_{j,v}t^w\nonumber\\
&\quad+\hbar^2\displaystyle\sum_{s \geq 0}  \limits\displaystyle\sum_{u=1}^{i}\limits\sum_{w\in\mathbb{Z}}\limits\sum_{v=n+1}^b\limits \delta_{i,j}e_{v,j}t^{-w}\otimes e_{i,u}t^{-s}e_{u,v}t^{s+w}\nonumber\\
&\quad+\hbar^2\displaystyle\sum_{s \geq 0}  \limits\sum_{w\in\mathbb{Z}}\limits\sum_{v=n+1}^b\limits\delta(j\leq i)e_{v,j}t^{-w}\otimes e_{i,v}t^{w-s}e_{j,i}t^s\nonumber\\
&\quad+\hbar^2\displaystyle\sum_{s \geq 0} \limits\displaystyle\sum_{u=i+1}^{a}\limits\sum_{w\in\mathbb{Z}}\limits\sum_{v=n+1}^b\limits \delta_{i,j}e_{v,j}t^{-w}\otimes e_{i,u}t^{-s-1}e_{u,v}t^{s+w+1}\nonumber\\
&\quad+\hbar^2\displaystyle\sum_{s \geq 0} \limits\sum_{w\in\mathbb{Z}}\limits\sum_{v=n+1}^b\limits\delta(j>i)e_{v,j}t^{-w}\otimes e_{i,v}t^{w-s-1}e_{j,i}t^{s+1}\nonumber\\
&\quad-\hbar^2\displaystyle\sum_{s \geq 0}\limits\displaystyle\sum_{u=1}^{i}\limits \sum_{w\in\mathbb{Z}}\limits\sum_{v=n+1}^b\limits \delta_{i+1,j}e_{v,j}t^{-w}\otimes e_{i+1,u}t^{-s}e_{u,v}t^{s+w}\nonumber\\
&\quad-\hbar^2\displaystyle\sum_{s \geq 0}\limits\sum_{w\in\mathbb{Z}}\limits\sum_{v=n+1}^b\limits \delta(j\leq i)e_{v,j}t^{-w}\otimes e_{i+1,v}t^{w-s}e_{j,i+1}t^s\nonumber\\
&\quad-\hbar^2\displaystyle\sum_{s \geq 0}\limits\displaystyle\sum_{u=i+1}^{a} \limits\sum_{w\in\mathbb{Z}}\limits\sum_{v=n+1}^b\limits \delta_{i+1,j}e_{v,j}t^{-w}\otimes e_{i+1,u}t^{-s-1}e_{u,v}t^{s+w+1}\nonumber\\
&\quad-\hbar^2\displaystyle\sum_{s \geq 0}\limits\sum_{w\in\mathbb{Z}}\limits\sum_{v=n+1}^b\limits\delta(j>i)e_{v,j}t^{-w}\otimes e_{i+1,v}t^{w-s-1}e_{j,i+1}t^{s+1}.\label{equation2}
\end{align}
By the assumption $i<j$, we obtain
\begin{align*}
&\quad[1\otimes H_{i,1},F_j]-[1\otimes H_{j,1},F_i]\\
&=\eqref{equation2}_{j,i+1,1}+\eqref{equation2}_{i,j,2}+\eqref{equation2}_{j,i+1,3}+\eqref{equation2}_{i,j,4}+\eqref{equation2}_{j,i+1,5}\\
&\quad-\eqref{equation2}_{j,i,6}+\eqref{equation2}_{j,i+1,6}+\eqref{equation2}_{j,i+1,7}+\eqref{equation2}_{i,j,8}-\eqref{equation2}_{i,j+1,8}\\
&\quad+\eqref{equation2}_{i,j,9}-\eqref{equation2}_{j,i,10}+\eqref{equation2}_{j,i+1,10}+\eqref{equation2}_{i,j,11}+\eqref{equation2}_{i,j,12}-\eqref{equation2}_{i,j+1,12}.
\end{align*}
By a direct computation, we obtain
\begin{align}
&\quad\eqref{equation2}_{j,i+1,1}+\eqref{equation2}_{i,j,2}\nonumber\\
&=-\dfrac{j}{2}\hbar^2\sum_{w\in\mathbb{Z}}\limits\sum_{v=n+1}^b\limits\delta_{j,i+1}e_{i+1,v}t^w\otimes e_{v,i+1}t^{-w}+\dfrac{i}{2}\hbar\sum_{w\in\mathbb{Z}}\limits\sum_{v=n+1}^b\limits\delta_{i+1,j}e_{j,v}t^w\otimes e_{v,j}t^{-w}\nonumber\\
&=-\dfrac{1}{2}\hbar^2\sum_{w\in\mathbb{Z}}\limits\sum_{v=n+1}^b\limits\delta_{j,i+1}e_{i+1,v}t^w\otimes e_{v,i+1}t^{-w}.\label{equation505}
\end{align}
By a direct computation, we obtain
\begin{align}
\eqref{equation505}+\eqref{equation201}=0.
\end{align}
By using
\begin{align}
&\quad\eqref{equation2}_{j,i+1,5}+\eqref{equation2}_{i,j,9}\nonumber\\
&=\hbar^2\displaystyle\sum_{s \geq 0}  \limits\displaystyle\sum_{u=1}^{j}\limits\sum_{w\in\mathbb{Z}}\limits\sum_{v=n+1}^b\limits \delta_{i+1,j}e_{v,i+1}t^{-w}\otimes e_{j,u}t^{-s}e_{u,v}t^{s+w}\nonumber\\
&\quad-\hbar^2\displaystyle\sum_{s \geq 0}\limits\displaystyle\sum_{u=1}^{i}\limits \sum_{w\in\mathbb{Z}}\limits\sum_{v=n+1}^b\limits \delta_{i+1,j}e_{v,j}t^{-w}\otimes e_{i+1,u}t^{-s}e_{u,v}t^{s+w}\nonumber\\
&=\hbar^2\displaystyle\sum_{s \geq 0}  \limits\displaystyle\sum_{w\in\mathbb{Z}}\limits\sum_{v=n+1}^b\limits \delta_{i+1,j}e_{v,i+1}t^{-w}\otimes e_{j,j}t^{-s}e_{j,v}t^{s+w}
\end{align}
and
\begin{align}
&\quad\eqref{equation2}_{j,i+1,7}+\eqref{equation2}_{i,j,11}\nonumber\\
&=\hbar^2\displaystyle\sum_{s \geq 0} \limits\displaystyle\sum_{u=j+1}^{n}\limits\sum_{w\in\mathbb{Z}}\limits\sum_{v=n+1}^b\limits \delta_{i+1,j}e_{v,j}t^{-w}\otimes e_{i+1,u}t^{-s-1}e_{u,v}t^{s+w+1}\nonumber\\
&\quad-\hbar^2\displaystyle\sum_{s \geq 0}\limits\displaystyle\sum_{u=i+1}^{n} \limits\sum_{w\in\mathbb{Z}}\limits\sum_{v=n+1}^b\limits \delta_{i+1,j}e_{v,j}t^{-w}\otimes e_{i+1,u}t^{-s-1}e_{u,v}t^{s+w+1}\nonumber\\
&=-\hbar^2\displaystyle\sum_{s \geq 0}\limits\displaystyle\sum_{w\in\mathbb{Z}}\limits\sum_{v=n+1}^b\limits \delta_{i+1,j} e_{v,j}t^{-w}\otimes e_{i+1,i+1}t^{-s-1}e_{i+1,v}t^{s+w+1}.
\end{align}
we obtain
\begin{align}
&\quad\eqref{equation2}_{j,i+1,5}+\eqref{equation2}_{i,j,9}+\eqref{equation2}_{j,i+1,7}+\eqref{equation2}_{i,j,11}\nonumber\\
&=\hbar^2\displaystyle\sum_{w\in\mathbb{Z}}\limits\sum_{v=n+1}^b\limits \delta_{i+1,j}e_{v,i+1}t^{-w}\otimes e_{j,j}e_{j,v}t^{w}.\label{equation301}
\end{align}

By a direct computation, we obtain
\begin{align}
&\quad\eqref{equation301}+\eqref{equation112}=0,\\
&\quad\eqref{equation2}_{j,i+1,3}+\eqref{qu1}_{j,i+1,3}=0,\\
&\quad\eqref{equation2}_{i,j,4}+\eqref{qu1}_{i,j,1}=0,\\
&\quad-\eqref{qu3}_{j,i,2}+\eqref{equation2}_{i,j,8}=0,\\
&\quad\eqref{qu3}_{j,i+1,2}+\eqref{equation2}_{i,j,12}=0,\\
&\quad\eqref{qu7}_{i,j,2}+\eqref{equation2}_{j,i+1,6}=0,\\
&\quad-\eqref{qu7}_{i,j+1,2}+\eqref{equation2}_{j,i+1,10}=0,\\
&\quad\eqref{qu4}_{i,j,2}-\eqref{equation2}_{j,i,6}=0,\\
&\quad-\eqref{qu4}_{i,j+1,2}-\eqref{equation2}_{j,i,10}=0,\\
&\quad\eqref{qu5}_{j,i,2}+\eqref{equation2}_{i,j+1,8}=0,\\
&\quad\eqref{qu5}_{j,i+1,2}+\eqref{equation2}_{i,j+1,12}=0.
\end{align}
This completes the proof of Lemma~\ref{Lem2}.
\section*{Acknowledgement}

The author wishes to express his gratitude to Daniele Valeri. This article is inspired by his lecture at "Quantum symmetries: Tensor categories, Topological quantum field theories, Vertex algebras" held at the University of Montreal. The author is also grateful to
Thomas Creutzig for proposing this problem. The author expresses his sincere thanks to Nicolas Guay and Shigenori Nakatsuka for the useful advice and discussions.
\bibliographystyle{plain}
\bibliography{syuu}

\begin{thebibliography}{10}

\bibitem{A1}
T.~Arakawa.
\newblock Representation theory of {$W$}-algebras.
\newblock {\em Invent. Math.}, 169(2):219--320, 2007,
  https://doi.org/10.1007/s00222-007-0046-1.

\bibitem{A3}
T.~Arakawa.
\newblock Introduction to {W}-algebras and their representation theory.
\newblock In {\em Perspectives in {L}ie theory}, volume~19 of {\em Springer
  INdAM Ser.}, pages 179--250. Springer, Cham, 2017.

\bibitem{BK}
J.~Brundan and A.~Kleshchev.
\newblock Shifted {Y}angians and finite {$W$}-algebras.
\newblock {\em Adv. Math.}, 200(1):136--195, 2006,
  https://doi.org/10.1016/j.aim.2004.11.004.

\bibitem{CE}
T.~Creutzig, D.~E. Diaconescu, and M.~Ma.
\newblock Affine {L}aumon spaces and iterated {${\cal W}$}-algebras.
\newblock {\em Comm. Math. Phys.}, 402(3):2133--2168, 2023.

\bibitem{DSK1}
A.~De~Sole and V.~G. Kac.
\newblock Finite vs affine {$W$}-algebras.
\newblock {\em Jpn. J. Math.}, 1(1):137--261, 2006,
  https://doi.org/10.1007/s11537-006-0505-2.

\bibitem{DKV}
A.~De~Sole, V.~G. Kac, and D.~Valeri.
\newblock A {L}ax type operator for quantum finite {$W$}-algebras.
\newblock {\em Selecta Math. (N.S.)}, 24(5):4617--4657, 2018.

\bibitem{FF}
B.~Feigin and E.~Frenkel.
\newblock Quantization of the {D}rinfeld-{S}okolov reduction.
\newblock {\em Phys. Lett. B}, 246(1-2):75--81, 1990.

\bibitem{FT}
M.~Finkelberg and A.~Tsymbaliuk.
\newblock Multiplicative slices, relativistic {T}oda and shifted quantum affine
  algebras.
\newblock In {\em Representations and nilpotent orbits of {L}ie algebraic
  systems}, volume 330 of {\em Progr. Math.}, pages 133--304.
  Birkh\"{a}user/Springer, Cham, 2019.

\bibitem{F}
E.~Frenkel.
\newblock Wakimoto modules, opers and the center at the critical level.
\newblock {\em Adv. Math.}, 195(2):297--404, 2005.

\bibitem{FZ}
I.~B. Frenkel and Y.~Zhu.
\newblock Vertex operator algebras associated to representations of affine and
  {V}irasoro algebras.
\newblock {\em Duke Math. J.}, 66(1):123--168, 1992,
  https://doi.org/10.1215/S0012-7094-92-06604-X.

\bibitem{Gu2}
N.~Guay.
\newblock Cherednik algebras and {Y}angians.
\newblock {\em Int. Math. Res. Not.}, (57):3551--3593, 2005,
  https://doi.org/10.1155/IMRN.2005.3551.

\bibitem{Gu1}
N.~Guay.
\newblock Affine {Y}angians and deformed double current algebras in type {A}.
\newblock {\em Adv. Math.}, 211(2):436--484,
  https://doi.org/10.1016/j.aim.2006.08.007, 2007.

\bibitem{GNW}
N.~Guay, H.~Nakajima, and C.~Wendlandt.
\newblock Coproduct for {Y}angians of affine {K}ac-{M}oody algebras.
\newblock {\em Adv. Math.}, 338:865--911, 2018,
  https://doi.org/10.1016/j.aim.2018.09.013.

\bibitem{GRW}
N.~Guay, V.~Regelskis, and C.~Wendlandt.
\newblock Vertex representations for {Y}angians of {K}ac-{M}oody algebras.
\newblock {\em J. \'{E}c. polytech. Math.}, 6:665--706, 2019.

\bibitem{KRW}
V.~Kac, S.~S. Roan, and M.~Wakimoto.
\newblock Quantum reduction for affine superalgebras.
\newblock {\em Comm. Math. Phys.}, 241(2-3):307--342, 2003,
  https://doi.org/10.1007/s00220-003-0926-1.

\bibitem{KW1}
V.~G. Kac and M.~Wakimoto.
\newblock Quantum reduction and representation theory of superconformal
  algebras.
\newblock {\em Adv. Math.}, 185(2):400--458, 2004,
  https://doi.org/10.1016/j.aim.2003.12.005.

\bibitem{KW2}
V.~G. Kac and M.~Wakimoto.
\newblock Corrigendum to: ``{Q}uantum reduction and representation theory of
  superconformal algebras'' [{A}dv. {M}ath. {\bf 185} (2004), no. 2, 400--458;
  mr2060475].
\newblock {\em Adv. Math.}, 193(2):453--455, 2005,
  https://doi.org/10.1016/j.aim.2005.01.001.

\bibitem{K1}
R.~Kodera.
\newblock On {G}uay's evaluation map for affine {Y}angians.
\newblock {\em Algebr. Represent. Theory}, 24(1):253--267, 2021,
  https://doi.org/10.1007/s10468-019-09945-w.

\bibitem{KU}
Ryosuke Kodera and Mamoru Ueda.
\newblock Coproduct for affine {Y}angians and parabolic induction for
  rectangular {$W$}-algebras.
\newblock {\em Lett. Math. Phys.}, 112(1):Paper No. 3, 37, 2022.

\bibitem{MNT}
A.~Matsuo, K.~Nagatomo, and A.~Tsuchiya.
\newblock Quasi-finite algebras graded by {H}amiltonian and vertex operator
  algebras.
\newblock {\em London Math. Soc. Lecture Note Ser.}, 372:282--329, 2010,
  https://doi.org/10.1017/CBO9780511730054.015.

\bibitem{Pr}
A.~Premet.
\newblock Special transverse slices and their enveloping algebras.
\newblock {\em Adv. Math.}, 170(1):1--55, 2002,
  https://doi.org/10.1006/aima.2001.2063.
\newblock With an appendix by Serge Skryabin.

\bibitem{SV}
O.~Schiffmann and E.~Vasserot.
\newblock Cherednik algebras, {W}-algebras and the equivariant cohomology of
  the moduli space of instantons on {$\bold{A}^2$}.
\newblock {\em Publ. Math. Inst. Hautes \'{E}tudes Sci.}, 118:213--342, 2013,
  https://doi.org/10.1007/s10240-013-0052-3.

\bibitem{U5}
M.~Ueda.
\newblock An {E}xample of {H}omomorphisms from the {G}uay's affine {Y}angians
  to non-rectangular {$W$}-algebras.
\newblock to appear in Transformation Groups (arXiv:2211.14968).

\bibitem{U4}
M.~Ueda.
\newblock Affine super {Y}angians and rectangular {$W$}-superalgebras.
\newblock {\em J. Math. Phys.}, 63(5):Paper No. 051701, 34, 2022.

\bibitem{Z}
Y.~Zhu.
\newblock Modular invariance of characters of vertex operator algebras.
\newblock {\em J. Amer. Math. Soc.}, 9(1):237--302, 1996,
  https://doi.org/10.1090/S0894-0347-96-00182-8.

\end{thebibliography}
\end{document}